\newtheorem{theorem}{Theorem}[section]
\newtheorem{lemma}[theorem]{Lemma}
\newtheorem{proposition}[theorem]{Proposition}
\newtheorem{conjecture}[theorem]{Conjecture}
\newtheorem{theorem/definition}[theorem]{Theorem/Definition}
\theoremstyle{definition}
\newtheorem{definition}[theorem]{Definition}
\newtheorem{remark}[theorem]{Remark}
\newtheorem{example}[theorem]{Example}
\newtheorem{question}[theorem]{Question}
\begin{document}
\title[Relative DT theory for Calabi-Yau 4-folds]{Relative Donaldson-Thomas theory \\ for Calabi-Yau 4-folds}
\author{Yalong Cao}
\address{The Institute of Mathematical Sciences and Department of Mathematics, The Chinese University of Hong Kong, Shatin, Hong Kong}
\email{ylcao@math.cuhk.edu.hk}

\author{Naichung Conan Leung}
\address{The Institute of Mathematical Sciences and Department of Mathematics, The Chinese University of Hong Kong, Shatin, Hong Kong}
\email{leung@math.cuhk.edu.hk}

\maketitle
\begin{abstract}
Given a complex 4-fold $X$ with an (Calabi-Yau 3-fold) anti-canonical divisor $Y$, we study relative Donaldson-Thomas invariants for this pair, which are elements in the Donaldson-Thomas cohomologies of $Y$. We also discuss gluing formulas which relate relative invariants and
$DT_{4}$ invariants for Calabi-Yau 4-folds.
\end{abstract}


\section{Introduction}
Donaldson-Thomas invariants ($DT_{3}$ invariants for short) were proposed by Donaldson and Thomas \cite{dt}, and defined in Thomas' thesis \cite{th}.
They count stable sheaves on Calabi-Yau 3-folds, which are related to many other interesting subjects, including
Gopakumar-Vafa conjecture on BPS numbers in string theory \cite{gv}, \cite{hosono}, \cite{kl} and
MNOP conjecture \cite{mnop}, \cite{mnop2}, \cite{moop}, \cite{pandpixton} relating $DT_{3}$ invariants to Gromov-Witten invariants.
The generalization of $DT_{3}$ invariants to count strictly semi-stable sheaves is due to Joyce and Song \cite{js} using Behrend's result \cite{behrend}.

Kontsevich and Soibelman proposed generalized as well as motivic $DT$ theory for Calabi-Yau 3-categories \cite{ks}, which was later studied by Behrend-Bryan-Szendr\"{o}i \cite{bbs} for Hilbert schemes of points. The wall-crossing formula \cite{ks}, \cite{js} is an important structure for
Bridgeland's stability condition \cite{bridgeland} and Pandharipande-Thomas invariants \cite{pt}, \cite{toda}.

As a categorification of Donaldson-Thomas invariants, Brav, Bussi, Dupont, Joyce and Szendroi \cite{bbdjs} and Kiem and Li \cite{kl}
recently defined a cohomology theory for Calabi-Yau 3-folds whose Euler characteristic is the $DT_{3}$ invariant.
The point is that moduli spaces of simple sheaves on Calabi-Yau 3-folds are critical points of holomorphic functions locally \cite{bbj}, \cite{js},
and we could consider perverse sheaves of vanishing cycles of these functions. They glued these local perverse sheaves and defined
its hypercohomology as $DT_{3}$ cohomology. In general, such a gluing requires a square root of the determinant
line bundle of the moduli space \cite{hua1}, \cite{ks}, \cite{no}.

As an extension of Donaldson-Thomas invariants to Calabi-Yau 4-folds, Borisov and Joyce \cite{bj} and the authors \cite{cao}, \cite{caoleung}
developed $DT_{4}$ invariants (or 'holomorphic Donaldson invariants') which count stable sheaves on Calabi-Yau 4-folds.
It is desirable to construct a TQFT type structure for these $DT_{4}$ and $DT_{3}$ theories.
The purpose of this paper is to make some initial steps in this direction.
We remark that Joyce also has a program of establishing TQFT structures on
Calabi-Yau 3 and 4-folds \cite{joyce} using Pantev-T\"{o}en-Vaqui\'{e}-Vezzosi's shifted symplectic structures on derived schemes \cite{ptvv}. \\

Our set-up is a smooth Calabi-Yau 3-fold $Y=s^{-1}(0)$ as an anti-canonical divisor of a complex projective 4-fold $X$, where
$s\in \Gamma(X,K_{X}^{-1})$. Then $1/s$ is nowhere vanishing inside $X\backslash Y$ which gives a trivialization of its
canonical bundle. Thus $X\backslash Y$ is an open Calabi-Yau 4-fold which has a compactification $X$ by adding a compact Calabi-Yau 3-fold.

We consider any Gieseker moduli space $\mathfrak{M}_{X}$ of semi-stable sheaves which consists of slope-stable bundles only, and assume
there exists a restriction morphism
$r: \mathfrak{M}_{X}\rightarrow \mathfrak{M}_{Y}$ to a Gieseker moduli space of stable sheaves on $Y$ (see Theorem \ref{restriction thm} for its existence). The deformation-obstruction theory
associated to $r$ is described as follows: for any stable bundle $E\in\mathfrak{M}_{X}$, we have an exact sequence
\begin{equation}0\rightarrow H^{1}(X,End_{0}E\otimes K_{X})\rightarrow H^{1}(X,End_{0}E)\rightarrow H^{1}(Y,End_{0}E|_{Y})
\rightarrow \nonumber \end{equation}
\begin{equation}\rightarrow H^{2}(X,End_{0}E\otimes K_{X})\rightarrow H^{2}(X,End_{0}E)\rightarrow H^{2}(Y,End_{0}E|_{Y})  \rightarrow
\nonumber \end{equation}
\begin{equation}\rightarrow H^{3}(X,End_{0}E\otimes K_{X})\rightarrow
H^{3}(X,End_{0}E)\rightarrow 0. \quad \quad \quad \quad \quad \quad \quad \quad \nonumber \end{equation}
Note that the transpose of the above sequence with respect to Serre duality pairings on $X$ and $Y$ is itself \cite{dt}. This is the
key property for the definition of relative $DT_{4}$ virtual cycles, which we define for the following three good cases.

\textbf{Case I}. [Rigid case] If every $E\in \mathfrak{M}_{X}$ satisfies $H^{1}(Y,End_{0}E|_{Y})=0$, then the above long exact sequence
breaks into canonical isomorphisms
\begin{equation}H^{1}(X,End_{0}E) \cong H^{3}(X,End_{0}E)^{*}, \quad H^{2}(X,End_{0}E) \cong H^{2}(X,End_{0}E)^{*}. \nonumber \end{equation}
Similar to the case of Calabi-Yau 4-folds \cite{bj}, $\mathfrak{M}_{X}$ will have a $(-2)$-shifted symplectic structure in the sense of PTVV \cite{ptvv}.
By Borisov-Joyce \cite{bj}, there exists a virtual cycle, which we define to be the relative $DT_{4}$ virtual cycle.

\textbf{Case II}. [Surjective case] If $r: \mathfrak{M}_{X}\rightarrow\mathfrak{M}_{Y}$ is a surjective map between smooth moduli spaces (throughout this paper, unless
specified otherwise, smooth moduli spaces mean their Kuranishi maps are zero), we obtain a canonical isomorphism
\begin{equation}H^{2}(X,End_{0}E)\cong H^{2}(X,End_{0}E)^{*}, \nonumber \end{equation}
which endows the obstruction bundle $Ob_{\mathfrak{M}_{X}}$ with a non-degenerate quadratic form. Then the relative $DT_{4}$ virtual cycle
is defined to be the Euler class of the self-dual subbundle of $Ob_{\mathfrak{M}_{X}}$ as in Definition 5.12 \cite{caoleung}.

\textbf{Case III}. [Injective case] If $r: \mathfrak{M}_{X}\rightarrow\mathfrak{M}_{Y}$ is an injective map between smooth moduli spaces, we obtain an exact sequence
\begin{equation}0 \rightarrow H^{1}(X,End_{0}E)\rightarrow H^{1}(Y,End_{0}E|_{Y})
\rightarrow H^{2}(X,End_{0}E\otimes K_{X}) \nonumber \end{equation}
\begin{equation}\rightarrow  H^{2}(X,End_{0}E) \rightarrow H^{2}(Y,End_{0}E|_{Y}) \rightarrow H^{3}(X,End_{0}E\otimes K_{X}) \rightarrow 0.
\nonumber \end{equation}
This determines a surjective map
\begin{equation}s: Ob_{\mathfrak{M}_{X}}\twoheadrightarrow \mathcal{N}^{*}_{\mathfrak{M}_{X}/\mathfrak{M}_{Y}} \nonumber \end{equation}
from the obstruction bundle of $\mathfrak{M}_{X}$ to the conormal bundle of $\mathfrak{M}_{X}$ inside $\mathfrak{M}_{Y}$,
and a non-degenerate quadratic form on the reduced bundle $Ob_{\mathfrak{M}_{X}}^{red}\triangleq Ker(s)$.
As in \textbf{Case II}, we define the relative $DT_{4}$ virtual cycle
$[\mathfrak{M}_{X}^{rel}]^{vir}\in H_{*}(\mathfrak{M}_{X},\mathbb{Z})$ to be the Euler class
of the self-dual subbundle of $Ob^{red}_{\mathfrak{M}_{X}}$. Note that when $\mathfrak{M}_{X}$ is smooth, $r$ is injective and
a neighbourhood of $r(\mathfrak{M}_{X})\subseteq \mathfrak{M}_{Y}$ is smooth, $[\mathfrak{M}_{X}^{rel}]^{vir}$ can also be defined in a similar way.
It is easy to check these definitions of relative $DT_{4}$ virtual cycles in \textbf{Cases I-III} are all compatible.
We compute examples for relative $DT_{4}$ virtual cycles in Proposition \ref{lq example compute}.

To sum up, we define the notion of admissibility for Gieseker moduli spaces.
\begin{definition}\label{admissible}
Let $Y$ be a smooth anti-canonical divisor of a projective 4-fold $X$, and $\mathfrak{M}_{X}$ be a Gieseker moduli space of semi-stable sheaves.
$\mathfrak{M}_{X}$ is admissible with respect to $(X,Y)$ if \\
(i) $\mathfrak{M}_{X}$ consists of slope-stable bundles only, and \\
(ii) there exists a restriction morphism $r: \mathfrak{M}_{X}\rightarrow \mathfrak{M}_{Y}$ to a Gieseker moduli space of stable sheaves on $Y$ \footnote[1]{Theorem \ref{restriction thm} ensures we have many such examples.}.
\end{definition}
\begin{theorem/definition}
If $\mathfrak{M}_{X}$ is admissible with respect to $(X,Y)$, then the relative $DT_{4}$ virtual cycle exists, i.e.
\begin{equation}[\mathfrak{M}_{X}^{rel}]^{vir}\in H_{*}(\mathfrak{M}_{X},\mathbb{Z}_{2})\nonumber \end{equation}
provided that any one of the following conditions holds, \\
(1) $r(\mathfrak{M}_{X})$ is rigid, i.e. $H^{1}(Y,End_{0}E|_{Y})=0$ for any $E\in \mathfrak{M}_{X}$; or \\
(2) $r$ is surjective between smooth moduli spaces; or \\
(3) $r$ is injective between smooth moduli spaces
(at least when restricted to a neighbourhood of $r(\mathfrak{M}_{X})$ in $\mathfrak{M}_{Y}$).

Furthermore, $[\mathfrak{M}_{X}^{rel}]^{vir}$ will be defined over integer if $r: \mathfrak{M}_{X}\rightarrow \mathfrak{M}_{Y}$ has a relative orientation (Definition \ref{def on rel ori 1}) \footnote[2]{See Theorem \ref{thm 1} and Proposition \ref{pro on rel ori 1} for some partial verification of the existence of relative orientations.}.
\end{theorem/definition}

In general, the virtual dimension of $[\mathfrak{M}_{X}^{rel}]^{vir}$ is not zero, and we introduce the $\mu$-map to cut it down and define the relative $DT_{4}$ invariant. The relative $DT_{4}$ invariant is a map
\begin{equation}v(\mathfrak{M}_{X}): Sym^{*}\big(H_{*}(X,\mathbb{Z}) \otimes \mathbb{Z}[x_{1},x_{2},...]\big)
\rightarrow \mathbb{H}^{*}(\mathfrak{M}_{Y},\mathcal{P}^{\bullet}_{\mathfrak{M}_{Y}}), \nonumber \end{equation}
where $\mathcal{P}^{\bullet}_{\mathfrak{M}_{Y}}$ is the perverse sheaf constructed
by Brav-Bussi-Dupont-Joyce-Szendroi \cite{bbdjs} and Kiem-Li \cite{kl}. In \textbf{Cases I-III}, if $\mathfrak{M}_{Y}$ is smooth, the perverse sheaf
$\mathcal{P}^{\bullet}_{\mathfrak{M}_{Y}}$ is the $\mathbb{C}$-constant sheaf (up to some degree shift) and the relative $DT_{4}$ invariant $v(\mathfrak{M}_{X})$
is defined by pairing the relative $DT_{4}$ virtual cycle, $\mu$-map and
pull-back classes from $H^{*}(\mathfrak{M}_{Y},\mathbb{C})$.  \\

So far, we only work with holomorphic bundles (i.e. $\mathfrak{M}_{X}$ consists of bundles only).
To extend to other coherent sheaves, say ideal sheaves of subschemes, one difficulty
is that we will not have well-defined restriction maps in the usual sense as subschemes would sit inside the divisor $Y\subseteq X$. To handle this issue, we introduce Li-Wu's good degenerations of Hilbert schemes \cite{liwu}. Li-Wu's idea is to blow up the divisor once subschemes sit inside $Y$
(like neck-stretching  in Donaldson theory) and we are then reduced to consider subschemes which are 'transversal' to $Y$.

Working with their good degenerations, we study extensions of the above \textbf{Cases I-III} to ideal sheaves cases. In particular, we study the
obstruction theory of moduli spaces of relative ideal sheaves (Lemma \ref{fund exact seq for dege }) and discuss a gluing formula (Theorem \ref{dege formula}) based on certain conjectures.
We also compute examples on relative $DT_{4}$ virtual cycles (Example \ref{ideal sheaves of one pt}$-$Example \ref{ex rel DT4/DT3}) based on Definition \ref{rel DT4 vc for ideal sheaves}, which include
\begin{example}(Generic quintic in $\mathbb{P}^{4}$, Example \ref{ex quintic})

We take $X=\mathbb{P}^{4}$ which contains a generic quintic 3-fold $Y=Q$ as its anti-canonical divisor, and consider the primitive curve class $[H]\in H_{2}(X,\mathbb{Z})$. Ideal sheaves of curves representing this class have Chern character
$c=(1,0,0,-PD([H]),\frac{3}{2})$ and we denote their moduli space by $I_{\frac{3}{2}}(X,[H])$ ($\cong Gr(2,5)$).
The generic quintic $Q$ contains $2875$ rigid degree $1$ rational curves and $I_{\frac{3}{2}}(X,[H])$ contains a finite subset $S$ with $2875$ points. $I_{\frac{3}{2}}(X,[H])\backslash S$ has a well-defined restriction morphism to $Hilb^{5}(Q)$.
To extend the morphism across those $2875$ points, we introduce Li-Wu's expanded pair $X[1]_{0}=X\cup\Delta_{1}$, $Y[1]_{0}(\cong Q)\subseteq\Delta_{1}$, where $\Delta_{1}\cong \mathbb{P}(\mathcal{O}_{Q}\oplus
\mathcal{O}_{\mathbb{P}^{4}}(5)|_{Q})$, and consider the moduli space $I_{\frac{3}{2}}(X[1]_{0},[H])$ of
relative ideal sheaves of curves. Geometrically, it
is the blow up of $I_{\frac{3}{2}}(X,[H])$ along those $2875$ points, i.e.
\begin{equation}I_{\frac{3}{2}}(X[1]_{0},[H])\cong Bl_{S}(Gr(2,5)),  \nonumber \end{equation}
where each exceptional divisor corresponds to a $Hilb^{5}(\mathbb{P}^{1})$ for each $\mathbb{P}^{1}\subseteq Q$.
We then have a injective restriction morphism
\begin{equation}I_{\frac{3}{2}}(X[1]_{0},[H])\rightarrow Hilb^{5}(Y[1]_{0}),  \nonumber \end{equation}
\begin{equation}I_{C}\mapsto I_{C}|_{Y[1]_{0}}  \nonumber \end{equation}
with smooth image. Conditions in Definition \ref{rel DT4 vc for ideal sheaves} (or \textbf{Case III}) are satisfied and the relative $DT_{4}$ virtual cycle is the usual fundamental class of the moduli space $I_{\frac{3}{2}}(X[1]_{0},[H])\cong Bl_{S}(Gr(2,5))$.
\end{example}
\begin{proposition}
For $(\mathbb{P}^{4},Q)$, we have a restriction morphism
\begin{equation}I_{\frac{3}{2}}(\mathbb{P}^{4}[1]_{0},[H])\rightarrow Hilb^{5}(Q)  \nonumber \end{equation}
from the moduli space of relative ideal sheaves of degree 1 rational curves in $\mathbb{P}^{4}$ to the Hilbert scheme of five points in a generic quintic 3-fold. The relative $DT_{4}$ virtual cycle of $I_{\frac{3}{2}}(\mathbb{P}^{4}[1]_{0},[H])$ is the usual fundamental class of the moduli space $I_{\frac{3}{2}}(\mathbb{P}^{4}[1]_{0},[H])\cong Bl_{S}(Gr(2,5))$.
\end{proposition}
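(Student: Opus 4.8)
The plan is to construct the restriction morphism in two stages and then reduce the virtual-cycle statement to the vanishing of a reduced obstruction bundle. Since $I_{\frac{3}{2}}(\mathbb{P}^{4},[H])\cong Gr(2,5)$ parametrizes lines in $\mathbb{P}^{4}$, the starting point is that for a line $C\not\subseteq Q$ the scheme-theoretic intersection $C\cap Q$ has length $\deg C\cdot\deg Q=5$, hence defines a point of $Hilb^{5}(Q)$. First I would check that $[C]\mapsto [C\cap Q]$ is a morphism of schemes on the open locus $Gr(2,5)\setminus S$ (flatness of the universal intersection, each $C\cap Q$ being a degree-$5$ divisor on $C\cong\mathbb{P}^{1}$), where $S$ is the set of $2875$ lines contained in $Q$, and that it is injective there, using that two distinct lines meet in at most one point so that a length-$5$ subscheme $C\cap Q$ cannot lie on any line other than $C$.

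The obstruction to extending across $S$ is that as $C$ degenerates to a line $\ell\subseteq Q$, the intersection $C\cap Q$ limits to all of $\ell$ rather than to five points. To resolve this I would pass to Li-Wu's expanded pair $X[1]_{0}=X\cup_{Q}\Delta_{1}$ with $\Delta_{1}\cong\mathbb{P}(\mathcal{O}_{Q}\oplus\mathcal{O}_{\mathbb{P}^{4}}(5)|_{Q})$ and the moduli space $I_{\frac{3}{2}}(\mathbb{P}^{4}[1]_{0},[H])$, identifying it with $Bl_{S}(Gr(2,5))$ exactly as in Example \ref{ex quintic}: away from $S$ it agrees with $Gr(2,5)$, while over each $\ell\in S$ the bubble component records five points of the broken curve, yielding an exceptional divisor isomorphic to $Hilb^{5}(\mathbb{P}^{1})$. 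Restricting the relative ideal sheaf to $Y[1]_{0}\cong Q$ then extends the morphism to all of $Bl_{S}(Gr(2,5))$, sending each exceptional fiber into the sublocus of $Hilb^{5}(Q)$ of length-$5$ subschemes supported on the corresponding line $\ell$. I would then verify injectivity and smoothness of the image globally, using the rigidity of the $2875$ lines to guarantee the image sits inside the smooth locus of $Hilb^{5}(Q)$ and that the two strata (generic five points versus subschemes on a line) have disjoint images.

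With $r: I_{\frac{3}{2}}(\mathbb{P}^{4}[1]_{0},[H])\rightarrow Hilb^{5}(Q)$ established as an injection from a smooth $6$-dimensional source onto a smooth image, the final step is to run \textbf{Case III} of Definition \ref{rel DT4 vc for ideal sheaves}. Here the relative obstruction theory is governed by the exact sequence of Lemma \ref{fund exact seq for dege }, which produces the surjection $s: Ob_{\mathfrak{M}_{X}}\twoheadrightarrow\mathcal{N}^{*}_{\mathfrak{M}_{X}/\mathfrak{M}_{Y}}$ together with a non-degenerate quadratic form on $Ob^{red}_{\mathfrak{M}_{X}}=\ker(s)$, where $\mathfrak{M}_{X}=I_{\frac{3}{2}}(\mathbb{P}^{4}[1]_{0},[H])$. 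Since $Bl_{S}(Gr(2,5))$ is smooth of dimension $6$ and the image is smooth, I would show $Ob^{red}_{\mathfrak{M}_{X}}=0$, equivalently that $s$ is an isomorphism onto the conormal bundle, by matching ranks $\mathrm{rk}\,Ob_{\mathfrak{M}_{X}}=\dim Hilb^{5}(Q)-\dim Bl_{S}(Gr(2,5))=9$ over the smooth locus and verifying the vanishing of the relevant cohomology in the sequence (equivalently, that the relative virtual dimension equals $6$). Once $Ob^{red}_{\mathfrak{M}_{X}}=0$, its self-dual subbundle has rank $0$, so $[\mathfrak{M}_{X}^{rel}]^{vir}=e(0)\cap[Bl_{S}(Gr(2,5))]=[Bl_{S}(Gr(2,5))]$.

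The hard part will be the obstruction-theoretic analysis at the exceptional divisors, i.e. at relative ideal sheaves whose curve is supported on the bubble component $\Delta_{1}$. Away from $S$ the computation is transparent because the family $C\cap Q$ is honestly flat, but over the $2875$ points one must confirm that the gluing of deformation-obstruction data across the normal-crossing locus $Q=Y[1]_{0}$ still fits the \textbf{Case III} pattern of Lemma \ref{fund exact seq for dege }, and in particular that no extra obstruction survives from the $Hilb^{5}(\mathbb{P}^{1})$ directions of the exceptional fiber. Establishing $Ob^{red}_{\mathfrak{M}_{X}}=0$ uniformly across both strata, rather than only on the open part, is where the rigidity of the lines in $Q$ and the explicit structure of the expanded degeneration must be used most carefully.
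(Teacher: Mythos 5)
Your proposal follows essentially the same route as the paper's Example \ref{ex quintic}: identify $I_{\frac{3}{2}}(\mathbb{P}^{4}[1]_{0},[H])$ with $Bl_{S}(Gr(2,5))$ via Li--Wu's expanded pair, exhibit the injective restriction morphism with smooth image in $Hilb^{5}(Y[1]_{0})\cong Hilb^{5}(Q)$, and then invoke condition (2) of Definition \ref{rel DT4 vc for ideal sheaves}, with the rank count $\mathrm{rk}(Ob)=\dim Ext^{2}=9=15-6=\mathrm{codim}$ forcing $Ob^{red}=0$ and hence the virtual cycle to be the ordinary fundamental class. The extra details you supply (flatness and injectivity of $C\mapsto C\cap Q$ on $Gr(2,5)\setminus S$, and the obstruction analysis over the exceptional $Hilb^{5}(\mathbb{P}^{1})$ fibers) are exactly the verifications the paper compresses into the phrase ``by direct calculations.''
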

We adapt Li-Wu's good degenerations to torsion sheaves and verify Conjectures \ref{conj0}, \ref{conj1} in the following case.
\begin{example}(Relative $DT_{4}$/$DT_{3}$, Example \ref{ex rel DT4/DT3})

Let $X=Y_{1}\times \mathbb{P}^{1}$ which contains $Y=(Y_{1}\times 0)\sqcup (Y_{1}\times \infty)$ as an anti-canonical divisor,
where $Y_{1}$ is a compact Calabi-Yau 3-fold.
We denote $\mathfrak{M}_{c}(Y_{1})$ to be a Gieseker moduli space of torsion-free semi-stable sheaves on $Y_{1}$ with
Chern character $c\in H^{even}(Y_{1},\mathbb{Q})$ (we assume there is no strictly semi-stable sheaf), and denote $\mathfrak{M}_{c}(X)$ to
be the moduli space of sheaves on $X$ which are push-forward of stable sheaves in $\mathfrak{M}_{c}(Y_{1}\times t)$ for some $t$
($\mathfrak{M}_{c}(X)\cong \mathfrak{M}_{c}(Y_{1})\times \mathbb{P}^{1}$).

To have a well-defined restriction map, we introduce $X[1]_{0}=\Delta_{-1}\cup X\cup\Delta_{1}$,
where $\Delta_{\pm1}\cong Y_{1}\times\mathbb{P}^{1}$, and consider the relative moduli space $\mathfrak{M}_{c}(X[1]_{0})$ with
\begin{equation}\mathfrak{M}_{c}(X[1]_{0})\cong \mathfrak{M}_{c}(Y_{1})\times\mathbb{P}^{1}.  \nonumber \end{equation}
Conjectures \ref{conj0}, \ref{conj1} hold and the relative $DT_{4}$ virtual cycle satisfies
\begin{equation}[\mathfrak{M}^{rel}_{c}(X[1]_{0})]^{vir}=DT_{3}(\mathfrak{M}_{c}(Y_{1}))\cdot[\mathbb{P}^{1}]\in
H_{2}(\mathfrak{M}_{c}(X[1]_{0}),\mathbb{Z}),\nonumber \end{equation}
where $DT_{3}(\mathfrak{M}_{c}(Y_{1}))$ is the Donaldson-Thomas invariant defined by Thomas \cite{th}.
\end{example}
\begin{theorem}
For $(X=Y_{1}\times \mathbb{P}^{1},Y=Y_{1}\times\{0,\infty\})$, where $Y_{1}$ is a compact Calabi-Yau 3-fold, we have a restriction map
\begin{equation}\mathfrak{M}_{c}(X[1]_{0})\cong \mathfrak{M}_{c}(Y_{1})\times\mathbb{P}^{1}\rightarrow pt,  \nonumber \end{equation}
from the moduli space of relative torsion sheaves coming from push-forward of stable sheaves in $\mathfrak{M}_{c}(Y_{1}\times t)$, where $\mathfrak{M}_{c}(Y_{1})$ is a Gieseker moduli space of torsion-free semi-stable sheaves on $Y_{1}$ consisting of no strictly semi-stable sheaf.

The relative $DT_{4}$ virtual cycle exists and satisfies
\begin{equation}[\mathfrak{M}^{rel}_{c}(X[1]_{0})]^{vir}=DT_{3}(\mathfrak{M}_{c}(Y_{1}))\cdot[\mathbb{P}^{1}]\in
H_{2}(\mathfrak{M}_{c}(X[1]_{0}),\mathbb{Z}),\nonumber \end{equation}
where $DT_{3}(\mathfrak{M}_{c}(Y_{1}))$ is the $DT_{3}$ invariant of $Y_{1}$ ($CY_{3}$) with respect to Chern character $c$.
\end{theorem}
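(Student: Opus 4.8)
\emph{The plan} is to reduce the relative $DT_4$ computation on the expanded pair $X[1]_0$ to a $DT_3$ obstruction-bundle computation on $\mathfrak{M}_c(Y_1)$, exploiting the product structure of $X$ and the triviality of the normal direction. First I would fix the geometry: every sheaf in $\mathfrak{M}_c(X)$ is a push-forward $\iota_{t*}F$ of a stable sheaf $F\in\mathfrak{M}_c(Y_1)$ along a slice inclusion $\iota_t:Y_1\times\{t\}\hookrightarrow X$, and for $t\neq 0,\infty$ its support is disjoint from $Y=Y_1\times\{0,\infty\}$, so the restriction to $Y$ vanishes and the restriction morphism is the constant map to the point. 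For $t\to 0,\infty$ the support runs into the divisor, and I would invoke Li-Wu's good degeneration, adapted to torsion sheaves, to bubble off into $\Delta_{\pm1}\cong Y_1\times\mathbb{P}^1$. Verifying Conjectures \ref{conj0} and \ref{conj1} in this case then amounts to checking that the resulting relative moduli space is precisely $\mathfrak{M}_c(X[1]_0)\cong\mathfrak{M}_c(Y_1)\times\mathbb{P}^1$ and that its relative obstruction theory is the one computed below; since $E|_{Y}=0$, this is the torsion analogue of the rigid \textbf{Case I}.

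Next I would compute the deformation-obstruction theory. As $\iota_t$ is a smooth divisor with trivial normal bundle $N\cong\mathcal{O}_{Y_1}$, the standard identity $L\iota_t^{*}\iota_{t*}F\cong F\oplus F[1]$ gives
\begin{equation}\mathrm{Ext}^i_X(\iota_{t*}F,\iota_{t*}F)\cong\mathrm{Ext}^i_{Y_1}(F,F)\oplus\mathrm{Ext}^{i-1}_{Y_1}(F,F).\nonumber\end{equation}
Writing $Ob_3=\mathrm{Ext}^2_{Y_1}(F,F)$ for the $DT_3$ obstruction space and using Serre duality $\mathrm{Ext}^1_{Y_1}(F,F)\cong Ob_3^{*}$ on the Calabi-Yau $3$-fold $Y_1$, I read off the tangent space $\mathrm{Ext}^1_X\cong\mathrm{Ext}^1_{Y_1}(F,F)\oplus\mathbb{C}$, whose extra summand $\mathrm{Ext}^0_{Y_1}(F,F)=\mathbb{C}$ is the unobstructed motion of $t$ producing the smooth $\mathbb{P}^1$ factor, and the obstruction space $\mathrm{Ext}^2_X(\iota_{t*}F,\iota_{t*}F)\cong Ob_3\oplus Ob_3^{*}$ (there is no trace contribution in degree two since $H^2(X,\mathcal{O}_X)=0$). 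Counting dimensions gives complex virtual dimension $(\dim Ob_3+1)-\tfrac12(2\dim Ob_3)=1$, i.e. real dimension $2$, consistent with a class in $H_2$.

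The heart of the argument is the identification of the self-dual Euler class. The Serre-duality quadratic form induced on $\mathrm{Ext}^2_X(\iota_{t*}F,\iota_{t*}F)\cong Ob_3\oplus Ob_3^{*}$ is the canonical hyperbolic pairing on $V\oplus V^{*}$, whose maximal positive-definite real subbundle $(Ob_3\oplus Ob_3^{*})_{+}$ is a deformation of the graph of the real structure on $Ob_3$, so that its Euler class equals $\pm\,e(Ob_3)$. Because this bundle is pulled back from $\mathfrak{M}_c(Y_1)$ and is constant along the $\mathbb{P}^1$ direction, Definition \ref{rel DT4 vc for ideal sheaves} (the torsion-sheaf analogue of \textbf{Case III}) yields
\begin{equation}[\mathfrak{M}^{rel}_c(X[1]_0)]^{vir}=\big(e(Ob_3)\cap[\mathfrak{M}_c(Y_1)]\big)\times[\mathbb{P}^1]=DT_3(\mathfrak{M}_c(Y_1))\cdot[\mathbb{P}^1].\nonumber\end{equation}
I would first carry this out assuming $\mathfrak{M}_c(Y_1)$ is smooth, where $DT_3(\mathfrak{M}_c(Y_1))=\int_{\mathfrak{M}_c(Y_1)}e(Ob_3)$ up to the standard sign, and then upgrade to the general case by replacing $\int e(Ob_3)$ with integration against Thomas' $DT_3$ virtual cycle of the symmetric obstruction theory of $\mathfrak{M}_c(Y_1)$; the relative orientation of Definition \ref{def on rel ori 1} is what pins down the sign and places the class in integral homology.

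The main obstacles I anticipate are twofold. The first is foundational: making Li-Wu's expanded-degeneration formalism, written for ideal sheaves, run for push-forward torsion sheaves, and checking that Conjectures \ref{conj0} and \ref{conj1} genuinely hold here, so that $\mathfrak{M}_c(X[1]_0)$ and its relative obstruction theory are as claimed—in particular that the bubbling introduces no extra obstructions beyond $Ob_3\oplus Ob_3^{*}$. The second is the compatibility of the hyperbolic-form Euler-class identity with the virtual rather than smooth structure, together with orientation bookkeeping: one must show that $(Ob_3\oplus Ob_3^{*})_{+}$ computes exactly Thomas' $DT_3$ number, with the correct sign fixed by the relative orientation, and not merely $\pm e(Ob_3)$ in the naive smooth model. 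Once these are in place the displayed formula follows.
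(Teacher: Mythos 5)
Your route is essentially the paper's: the paper (Example \ref{ex rel DT4/DT3}) also identifies $\mathfrak{M}_{c}(X[1]_{0})\cong\mathfrak{M}_{c}(Y_{1})\times\mathbb{P}^{1}$ via Li--Wu bubbling, computes $Ext^{1}_{X}(\iota_{*}\mathcal{F},\iota_{*}\mathcal{F})\cong Ext^{1}_{Y_{1}}(\mathcal{F},\mathcal{F})\oplus Ext^{0}_{Y_{1}}(\mathcal{F},\mathcal{F})$ and $Ext^{2}_{X}(\iota_{*}\mathcal{F},\iota_{*}\mathcal{F})\cong Ext^{2}_{Y_{1}}(\mathcal{F},\mathcal{F})\oplus Ext^{2}_{Y_{1}}(\mathcal{F},\mathcal{F})^{*}$, observes that the disjointness of supports gives the canonical isomorphism $Ext^{2}\cong (Ext^{2})^{*}$ and that $Ext^{2}_{Y_{1}}(\mathcal{F},\mathcal{F})$ is a canonical maximal isotropic subbundle, and concludes by reduction to the $DT_{3}$ virtual cycle.

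The one point you leave open --- and flag yourself as an ``anticipated obstacle'' --- is exactly the ingredient the paper supplies to pass from the smooth model to the general (virtual) case: under the identification above, a Kuranishi map of $\mathfrak{M}_{c}(X)$ at $\iota_{*}\mathcal{F}$ takes the form $\kappa_{\iota_{*}\mathcal{F}}(a,b)=(\kappa_{\mathcal{F}}(a),0)$, i.e.\ it is valued in the canonical maximal isotropic summand $Ext^{2}_{Y_{1}}(\mathcal{F},\mathcal{F})$ and coincides there with a Kuranishi map of $\mathfrak{M}_{c}(Y_{1})$ (this is Lemma 6.4 of \cite{caoleung}, and the reduction of the resulting half-Euler/virtual class to Thomas's $DT_{3}$ cycle is Theorem 6.5 of \cite{caoleung}, which the paper cites). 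Without this, your hyperbolic-form Euler-class identity $e\big((Ob_{3}\oplus Ob_{3}^{*})_{+}\big)=\pm e(Ob_{3})$ only handles the case where $\mathfrak{M}_{c}(Y_{1})$ is smooth, and the ``upgrade to the general case'' is an assertion rather than an argument. A second, cosmetic point: since the sheaves restrict to zero on $Y$, this example is the analogue of the rigid \textbf{Case I} (a $(-2)$-shifted symplectic structure on all of $\mathfrak{M}_{c}(X[1]_{0})$, with existence of the cycle resting on Conjectures \ref{conj0}, \ref{conj1}), not of \textbf{Case III}/Definition \ref{rel DT4 vc for ideal sheaves} as your displayed equation invokes; your own opening paragraph has it right.
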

Finally, we give a coherent description of the orientability issues involved in \textbf{Cases I-III} (Proposition \ref{prop on relative ori}) and
summarize them into the following definition.
\begin{definition}(Definition \ref{def of rel ori}) \label{def on rel ori 1}
Let $X$ be a smooth projective $4$-fold with a smooth anti-canonical divisor $Y\in|K^{-1}_{X}|$, and $r: \mathcal{M}_{X}\rightarrow\mathcal{M}_{Y}$
be a well-defined restriction morphism between coarse moduli spaces of simple sheaves on $X$ and $Y$ with fixed Chern classes respectively.
In this case, there exists a canonical isomorphism
\begin{equation}\alpha:(\mathcal{L}_{\mathcal{M}_{X}})^{\otimes2}\cong r^{*}\mathcal{L}_{\mathcal{M}_{Y}}.   \nonumber \end{equation}

A \emph{relative orientation} for morphism $r$ consists of a square root $(\mathcal{L}_{\mathcal{M}_{Y}}|_{\mathcal{M}^{red}_{Y}})^{\frac{1}{2}}$
of the determinant line bundle $\mathcal{L}_{\mathcal{M}_{Y}}|_{\mathcal{M}^{red}_{Y}}$ and an isomorphism
\begin{equation}\theta:\mathcal{L}_{\mathcal{M}_{X}}|_{\mathcal{M}^{red}_{X}} \cong
r^{*}(\mathcal{L}_{\mathcal{M}_{Y}}|_{\mathcal{M}^{red}_{Y}})^{\frac{1}{2}}  \nonumber \end{equation}
such that $\theta\otimes\theta\cong \alpha$ holds over $\mathcal{M}^{red}_{X}$ for the isomorphism $\alpha$.
\end{definition}
We then give the following partial verification of the existence of orientations.
\begin{theorem}(Weak relative orientability, Theorem \ref{thm on ori}) \label{thm 1} ${}$ \\
Let $Y$ be a smooth anti-canonical divisor in a projective $4$-fold $X$ with $Tor(H_{*}(X,\mathbb{Z}))=0$, $E\rightarrow X$ be a
complex vector bundle with structure group $SU(N)$, where $N\gg0$.
Let $\mathcal{M}_{X}$ be a coarse moduli scheme of simple holomorphic structures on $E$, which has a well-defined restriction morphism
\begin{equation}r: \mathcal{M}_{X}\rightarrow \mathcal{M}_{Y}, \nonumber \end{equation}
to a proper coarse moduli scheme of simple bundles on $Y$ with fixed Chern classes.

Then there exists a square root $(\mathcal{L}_{\mathcal{M}_{Y}}|_{\mathcal{M}^{red}_{Y}})^{\frac{1}{2}}$ of
$\mathcal{L}_{\mathcal{M}_{Y}}|_{\mathcal{M}^{red}_{Y}}$ such that
\begin{equation}c_{1}(\mathcal{L}_{\mathcal{M}_{X}}|_{\mathcal{M}^{red}_{X}})
=r^{*}c_{1}((\mathcal{L}_{\mathcal{M}_{Y}}|_{\mathcal{M}^{red}_{Y}})^{\frac{1}{2}}),
\nonumber \end{equation}
where $\mathcal{L}_{\mathcal{M}_{X}}$ (resp. $\mathcal{L}_{\mathcal{M}_{Y}}$) is the determinant line bundle of $\mathcal{M}_{X}$
(resp. $\mathcal{M}_{Y}$).
\end{theorem}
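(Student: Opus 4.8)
The plan is to make the isomorphism $\alpha$ explicit, read off the factor of two at the level of Chern classes, and then promote the resulting rational identity to the asserted integral statement using the torsion-freeness hypothesis.

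First I would produce $\alpha$ concretely. Applying $End_0\mathbb{E}\otimes(-)$ to the defining sequence $0\to K_X=\mathcal{O}_X(-Y)\to\mathcal{O}_X\to\mathcal{O}_Y\to 0$ (with $\mathbb{E}$ the universal/local bundle) yields the exact triangle $R\pi_*(End_0\mathbb{E}\otimes K_X)\to R\pi_*(End_0\mathbb{E})\to R\pi_*(End_0\mathbb{E}|_Y)$ of perfect complexes over $\mathcal{M}_X$, which is precisely the long exact sequence recorded in the introduction. Taking determinants gives $\mathcal{L}_{\mathcal{M}_X}\cong (\text{det of the }K_X\text{-term})^{-1}\otimes r^*\mathcal{L}_{\mathcal{M}_Y}$ up to sign conventions, and Serre duality on $X$ identifies $H^i(X,End_0\mathbb{E}\otimes K_X)\cong H^{4-i}(X,End_0\mathbb{E})^*$, because $End_0\mathbb{E}$ is self-dual, so that the $K_X$-determinant is $\mathcal{L}_{\mathcal{M}_X}^{-1}$. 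The two factors combine to $\alpha:\mathcal{L}_{\mathcal{M}_X}^{\otimes 2}\cong r^*\mathcal{L}_{\mathcal{M}_Y}$, exactly as in Definition \ref{def on rel ori 1}. Restricting to the reduced loci and taking $c_1$ gives $2\,c_1(\mathcal{L}_{\mathcal{M}_X}|_{\mathcal{M}^{red}_X})=r^*c_1(\mathcal{L}_{\mathcal{M}_Y}|_{\mathcal{M}^{red}_Y})$.

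Next I would construct the square root on $\mathcal{M}^{red}_Y$. The determinant line bundle is pulled back along the classifying map $\mathcal{M}^{red}_Y\to\mathcal{B}_Y$ into the configuration space of holomorphic structures modulo complexified gauge, which for $N\gg 0$ is homotopy equivalent to $Map(Y,BSU(N))$. Since $H^*(BSU(N);\mathbb{Z})$ is a torsion-free polynomial algebra on $c_2,\dots,c_N$ with no degree-two generator, $H^2(\mathcal{B}_Y,\mathbb{Z})$ is torsion-free and spanned by slant products of $c_i(\mathbb{E})$, $i\ge 2$, against $H_*(Y,\mathbb{Z})$. I would then compute $c_1(\mathcal{L}_{\mathcal{M}_Y})$ by the family index theorem as $-\pi_*\bigl(\mathrm{ch}(End_0\mathbb{E})\,\mathrm{td}(TY)\bigr)$ in degree two, where only the even Chern-character components of $End_0\mathbb{E}$ contribute (self-duality kills the odd ones) and $\mathrm{td}(TY)$ collapses because $c_1(Y)=0$ and $\mathrm{td}_3(Y)=0$ on the Calabi–Yau threefold. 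The aim is to read off that this class is divisible by two integrally — equivalently that its mod-two reduction vanishes — whereupon the Bockstein sequence produces a square root $(\mathcal{L}_{\mathcal{M}_Y}|_{\mathcal{M}^{red}_Y})^{\frac12}$, unique up to a two-torsion line bundle.

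Finally, for the compatibility I would pull the chosen square root back by $r$. By the identity of the first step, the class $c_1(\mathcal{L}_{\mathcal{M}_X}|_{\mathcal{M}^{red}_X})-r^*c_1\bigl((\mathcal{L}_{\mathcal{M}_Y}|_{\mathcal{M}^{red}_Y})^{\frac12}\bigr)$ is annihilated by two. Because $Tor(H_*(X,\mathbb{Z}))=0$, the cohomology of $\mathcal{B}_X\simeq Map(X,BSU(N))$ is torsion-free for $N\gg 0$ (it is generated by slant products of the torsion-free classes $c_i(\mathbb{E})$ against $H_*(X,\mathbb{Z})$), and hence so is the part of $H^2(\mathcal{M}^{red}_X,\mathbb{Z})$ through which these classes factor; multiplication by two is therefore injective there, and the two-torsion difference vanishes, giving the claimed equality. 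The main obstacle is the integral two-divisibility of $c_1(\mathcal{L}_{\mathcal{M}_Y})$: the rational identity $c_1(\mathcal{L}_{\mathcal{M}_X})=\tfrac12 r^*c_1(\mathcal{L}_{\mathcal{M}_Y})$ is immediate, but upgrading \emph{rationally} half-integral to \emph{integrally} even requires a Stiefel–Whitney/Wu-class computation for the Dolbeault index twisted by the self-dual bundle $End_0\mathbb{E}$ over the odd-dimensional $Y$, together with the torsion control above. This is exactly the step that limits the conclusion to the Chern-class (``weak'') statement: promoting it to the bundle isomorphism $\theta$ with $\theta\otimes\theta\cong\alpha$ would further require trivializing the two-torsion ambiguity in the square root compatibly with $\alpha$ over $\mathcal{M}^{red}_X$.
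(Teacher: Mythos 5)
Your skeleton coincides with the paper's: establish $2\,c_{1}(\mathcal{L}_{\mathcal{M}_{X}})=r^{*}c_{1}(\mathcal{L}_{\mathcal{M}_{Y}})$, produce a square root of $\mathcal{L}_{\mathcal{M}_{Y}}$, and then remove the residual $2$-torsion ambiguity by working in a torsion-free $H^{2}$. The proof the paper actually gives is a citation of Theorem 4.1 of \cite{caoleung3}, and the argument it invokes runs at the level of configuration spaces: the square root is built on $\mathcal{B}_{Y}\simeq Map(Y,BSU(N))$ and pulled back to $\mathcal{M}^{red}_{Y}$; the Chern-class identity is proved upstairs in $H^{2}(\mathcal{B}^{*}_{X},\mathbb{Z})$, whose torsion-freeness is obtained from the Federer spectral sequence ($\pi_{1}(Map_{P}(X,BSU(N)))\cong H^{3}(X,\mathbb{Z})\oplus H^{5}(X,\mathbb{Z})\oplus H^{7}(X,\mathbb{Z})$ for $N\gg0$, torsion-free by hypothesis), so that it suffices to verify the rational identity $2c_{1}(\mathrm{Ind}_{X})|_{C}=c_{1}(r^{*}\det\mathrm{Ind}_{Y})|_{C}$ on embedded surfaces $C\subseteq\mathcal{B}^{*}_{X}$ via the Atiyah--Singer family index theorem and the Gysin relation $\pi_{X!}\circ i_{!}=\pi_{Y!}$. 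Your derivation of the factor of two by taking determinants of the restriction triangle and applying Serre duality is a clean equivalent of that index computation, and your use of torsion-freeness to kill the $2$-torsion difference is exactly the paper's final step (just be careful to phrase it in $H^{2}(\mathcal{B}^{*}_{X},\mathbb{Z})$ rather than in some unspecified ``part of $H^{2}(\mathcal{M}^{red}_{X},\mathbb{Z})$'').

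The genuine gap is the one you flag yourself: the existence of $(\mathcal{L}_{\mathcal{M}_{Y}}|_{\mathcal{M}^{red}_{Y}})^{\frac{1}{2}}$, i.e.\ the integral $2$-divisibility of $c_{1}(\mathcal{L}_{\mathcal{M}_{Y}})$ over $\mathcal{B}_{Y}$, is half of the theorem's conclusion and is not a routine Bockstein observation --- it is the main orientability theorem of \cite{caoleung3} for Calabi--Yau $3$-folds, proved there by showing the degree-two component of $\pi_{*}\bigl(\mathrm{ch}(End\,\mathbb{E})\,\mathrm{td}(Y)\bigr)$ is even on $Map(Y,BSU(N))$ once $H_{*}(Y,\mathbb{Z})$ is torsion-free. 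Two points are missing from your outline: first, the torsion hypothesis is imposed on $X$, not on $Y$, so you must first deduce $Tor(H_{*}(Y,\mathbb{Z}))=0$ (the paper does this via the Lefschetz hyperplane theorem); second, the evenness computation itself has to be carried out or cited --- describing it as ``a Stiefel--Whitney/Wu-class computation'' that ``would be required'' leaves the load-bearing step unproven. With that ingredient supplied, the rest of your plan goes through and matches the paper's route.
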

Another partial verification is given as follows.
\begin{proposition}(Proposition \ref{prop on rel ori})\label{pro on rel ori 1} ${}$ \\
We assume $H^{1}(\mathcal{M}_{X},\mathbb{Z}_{2})=0$. Then relative orientations
for restriction morphism $r: \mathcal{M}_{X}\rightarrow\mathcal{M}_{Y}$ exist.
\end{proposition}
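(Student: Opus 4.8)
The plan is to separate the construction of a relative orientation into an existence statement for a square root on the Calabi-Yau $3$-fold side, which is the genuine input, and a uniqueness statement over $\mathcal{M}_X$, which the hypothesis $H^1(\mathcal{M}_X,\mathbb{Z}_2)=0$ supplies and which forces all the compatibilities for free. Recall from Definition \ref{def on rel ori 1} that the datum to be produced is a pair $(K,\theta)$, where $K$ is a square root of $\mathcal{L}_{\mathcal{M}_Y}|_{\mathcal{M}^{red}_Y}$ over $\mathcal{M}^{red}_Y$ and $\theta:\mathcal{L}_{\mathcal{M}_X}|_{\mathcal{M}^{red}_X}\cong r^{*}K$ is an isomorphism over $\mathcal{M}^{red}_X$ satisfying $\theta\otimes\theta\cong\alpha$. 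The starting observation is that, through the canonical isomorphism $\alpha$, both $r^{*}K$ and $\mathcal{L}_{\mathcal{M}_X}|_{\mathcal{M}^{red}_X}$ are square roots of one and the same line bundle $r^{*}(\mathcal{L}_{\mathcal{M}_Y}|_{\mathcal{M}^{red}_Y})\cong(\mathcal{L}_{\mathcal{M}_X}|_{\mathcal{M}^{red}_X})^{\otimes 2}$ over $\mathcal{M}^{red}_X$.

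First I would record the standard classification of square roots. On any space $Z$, two square roots of a fixed topological line bundle differ by a $2$-torsion line bundle, and the group of such bundles is $H^{2}(Z,\mathbb{Z})[2]$, which is the image of the Bockstein $\beta:H^{1}(Z,\mathbb{Z}_2)\to H^{2}(Z,\mathbb{Z})$ and hence a quotient of $H^{1}(Z,\mathbb{Z}_2)$. Since passing to the reduced structure does not change the underlying topological space, $H^{1}(\mathcal{M}^{red}_X,\mathbb{Z}_2)=H^{1}(\mathcal{M}_X,\mathbb{Z}_2)=0$, so square roots over $\mathcal{M}^{red}_X$, when they exist, are unique up to isomorphism.

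Next I would invoke the existence of one square root $K$ of $\mathcal{L}_{\mathcal{M}_Y}|_{\mathcal{M}^{red}_Y}$ on the Calabi-Yau $3$-fold side. Such a square root is exactly the orientation datum underlying the Brav-Bussi-Dupont-Joyce-Szendroi and Kiem-Li perverse sheaf $\mathcal{P}^{\bullet}_{\mathcal{M}_Y}$ recalled in the introduction, so its existence for the moduli space of simple sheaves on $Y$ is available; alternatively, the weak orientability Theorem \ref{thm 1} already produces a square root $K$ with $r^{*}c_1(K)=c_1(\mathcal{L}_{\mathcal{M}_X}|_{\mathcal{M}^{red}_X})$. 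Fixing any such $K$, the pullback $r^{*}K$ is a square root of $(\mathcal{L}_{\mathcal{M}_X}|_{\mathcal{M}^{red}_X})^{\otimes 2}$, so by the uniqueness of the previous step it is isomorphic to $\mathcal{L}_{\mathcal{M}_X}|_{\mathcal{M}^{red}_X}$. Choosing an isomorphism $\psi:\mathcal{L}_{\mathcal{M}_X}|_{\mathcal{M}^{red}_X}\cong r^{*}K$, the two isomorphisms $\psi\otimes\psi$ and $\alpha$ from $(\mathcal{L}_{\mathcal{M}_X}|_{\mathcal{M}^{red}_X})^{\otimes2}$ to $r^{*}(\mathcal{L}_{\mathcal{M}_Y}|_{\mathcal{M}^{red}_Y})$ differ by a nowhere-vanishing function, whose class in $H^{1}(\mathcal{M}^{red}_X,\mathbb{Z}_2)$ vanishes by hypothesis; hence it admits a square root and $\psi$ can be rescaled to a $\theta$ with $\theta\otimes\theta\cong\alpha$, completing the pair $(K,\theta)$.

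I expect the genuine content, and the main obstacle, to be the existence of the square root $K$ on the $Y$-side: the hypothesis $H^{1}(\mathcal{M}_X,\mathbb{Z}_2)=0$ only controls $\mathcal{M}_X$, where it yields uniqueness and thereby makes both $r^{*}K\cong\mathcal{L}_{\mathcal{M}_X}|_{\mathcal{M}^{red}_X}$ and the normalization $\theta\otimes\theta\cong\alpha$ automatic, but it does not by itself kill the obstruction $\bar c_1(\mathcal{L}_{\mathcal{M}_Y})\in H^{2}(\mathcal{M}^{red}_Y,\mathbb{Z}_2)$ to a square root over $\mathcal{M}^{red}_Y$, since pulling back along $r$ only shows this class lies in $\ker r^{*}$. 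The argument therefore rests on importing the Calabi-Yau $3$-fold orientation result (equivalently Theorem \ref{thm 1}), and secondary care is needed to carry out the square-root classification consistently in the topological category on the possibly singular coarse moduli spaces, where $c_1$ classifies complex line bundles and reduction leaves $\mathbb{Z}_2$-cohomology unchanged.
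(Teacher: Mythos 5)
Your argument is correct in structure and is essentially the expected one; note that the paper itself offers no proof here beyond a pointer to Proposition 4.6 of \cite{caoleung3}, so there is nothing in-text to compare against line by line. The two places where the hypothesis $H^{1}(\mathcal{M}_{X},\mathbb{Z}_{2})=0$ does work in your write-up are exactly right: it kills the $2$-torsion of $H^{2}(\mathcal{M}^{red}_{X},\mathbb{Z})$ (image of the Bockstein), so square roots of $(\mathcal{L}_{\mathcal{M}_{X}}|_{\mathcal{M}^{red}_{X}})^{\otimes 2}$ are unique up to isomorphism and $r^{*}K\cong\mathcal{L}_{\mathcal{M}_{X}}|_{\mathcal{M}^{red}_{X}}$; and it kills the obstruction in $H^{1}(\mathcal{M}^{red}_{X},\mathbb{Z}_{2})$ to lifting the nowhere-vanishing function $\alpha\circ(\psi\otimes\psi)^{-1}$ along the squaring cover $\mathbb{C}^{*}\to\mathbb{C}^{*}$, so $\psi$ can be rescaled to achieve $\theta\otimes\theta=\alpha$. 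You are also right to isolate the existence of the square root $K$ of $\mathcal{L}_{\mathcal{M}_{Y}}|_{\mathcal{M}^{red}_{Y}}$ as an input that the stated hypothesis cannot supply, since it only constrains $\mathcal{M}_{X}$. The proposition should be read with that input in force: in \textbf{Cases I--III}, where $\mathfrak{M}_{Y}$ is smooth, the proof of Proposition \ref{prop on relative ori} already exhibits the canonical square root $\det(T\mathfrak{M}_{Y})$ (Serre duality on the $CY_{3}$ gives $\mathcal{L}_{\mathcal{M}_{Y}}\cong\det(T\mathcal{M}_{Y})^{\otimes 2}$), so no extra assumption is needed there; in general one must import $K$ from Theorem \ref{thm 1} (with its hypotheses $Tor(H_{*}(X,\mathbb{Z}))=0$, $SU(N)$, $N\gg0$) or from orientation data on $\mathcal{M}_{Y}$ in the sense of Theorem \ref{DT3 coho}, exactly as you say. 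With that caveat made explicit, your proof is complete.
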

${}$ \\
\textbf{Acknowledgement}:
The first author is very grateful to Baosen Wu for many helpful discussions and
Dominic Joyce for introducing him his program of
studying TQFT structures on Calabi-Yau 3 and 4-folds during a visit at the Simons Center for Geometry and Physics on May 2014.
He also expresses his deep gratitude to Simon Donaldson for useful discussions and inviting him to give a talk on $DT_{4}$ theory
in the G2 manifolds workshop at the Simons Center on Sept. 2014, where part of this work was done. We thank Yan Soibelman for comments on the appropriate citation of references. The work of the second author was substantially supported by grants from the Research Grants Council of the Hong Kong Special
Administrative Region, China (Project No. CUHK401411 and CUHK14302714).

\section{Review of basic facts in DT theory}

\subsection{Some basic facts in $DT_{4}$ theory}
We start with a complex projective Calabi-Yau 4-fold $(X,\mathcal{O}_{X}(1))$ ($Hol(X)=SU(4)$) with a Ricci-flat K\"ahler metric $g$ \cite{yau}, a K\"ahler form $\omega$, a holomorphic four-form $\Omega$, and a topological bundle with a Hermitian metric $(E,h)$. We define
\begin{equation}*_{4}=(\Omega\lrcorner)\circ*: \Omega^{0,2}(X,EndE)\rightarrow \Omega^{0,2}(X,EndE),
\nonumber \end{equation}
with $*_{4}^{2}=1$ and it splits the corresponding harmonic subspace into (anti-)self-dual parts.

The $DT_{4}$ equations are defined to be
\begin{equation}\label{DT4 equations} \left\{ \begin{array}{l}
  F^{0,2}_{+}=0 \\ F\wedge\omega^{3}=0 ,    
\end{array}\right. \end{equation}
where the first equation is $F^{0,2}+*_{4}F^{0,2}=0$ and we assume $c_{1}(E)=0$ for simplicity in the moment map equation $ F\wedge\omega^{3}=0$.

We denote $\mathcal{M}^{DT_{4}}(X,g,[\omega],c,h)$ or simply $\mathcal{M}^{DT_{4}}_{c}$ to be the space of
gauge equivalence classes of solutions to the $DT_{4}$ equations on $E$ with $ch(E)=c$.

We take $\mathcal{M}_{c}^{bdl}$ to be the moduli space of slope-stable holomorphic bundles with fixed Chern character $c$.
By the Donaldson-Uhlenbeck-Yau's theorem \cite{UY}, we can identify it with the moduli space of gauge equivalence classes of
solutions to the holomorphic HYM equations
\begin{equation} \left\{ \begin{array}{l}
  F^{0,2}=0 \\ F\wedge\omega^{3}=0.    
\end{array}\right.
\nonumber \end{equation}
By Lemma 4.1 \cite{caoleung}, if $ch_{2}(E)\in H^{2,2}(X,\mathbb{C})$,
then $F^{0,2}_{+}=0\Rightarrow F^{0,2}=0$. In particular, if $\mathcal{M}_{c}^{bdl}\neq\emptyset$,
then $\mathcal{M}_{c}^{DT_{4}}\cong\mathcal{M}_{c}^{bdl}$ as sets.

The comparison of analytic structures is given by
\begin{theorem}\label{mo mDT4}(Theorem 1.1 \cite{caoleung}) We assume $\mathcal{M}_{c}^{bdl}\neq\emptyset$ and
fix $d_{A}\in\mathcal{M}_{c}^{DT_{4}}$, then  \\
(1) there exists a Kuranishi map $\tilde{\tilde{\kappa}}$ of $\mathcal{M}_{c}^{bdl}$ at $\overline{\partial}_{A}$
(the (0,1) part of $d_{A}$) such that $\tilde{\tilde{\kappa}}_{+}$ is a Kuranishi map of $\mathcal{M}_{c}^{DT_{4}}$ at $d_{A}$, where
\begin{equation} \xymatrix@1{
\tilde{\tilde{\kappa}}_{+}=\pi_{+}(\tilde{\tilde{\kappa}}): H^{0,1}(X,EndE) \ar[r]^{\quad \quad \quad \tilde{\tilde{\kappa}}}
& H^{0,2}(X,EndE)\ar[r]^{\pi_{+}} & H^{0,2}_{+}(X,EndE) }  \nonumber \end{equation}
and $\pi_{+}$ is projection to the self-dual forms; \\
(2) the closed imbedding between analytic spaces possibly with non-reduced structures $\mathcal{M}_{c}^{bdl}\hookrightarrow \mathcal{M}_{c}^{DT_{4}}$
is also a homeomorphism between topological spaces.
\end{theorem}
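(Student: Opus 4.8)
The plan is to realize $\mathcal{M}_{c}^{bdl}$ and $\mathcal{M}_{c}^{DT_{4}}$ inside a single Kuranishi chart at $\overline{\partial}_{A}$ and to exhibit the $DT_{4}$ equation as the ``self-dual half'' of the holomorphicity equation. First I would dispose of the moment-map equation $F\wedge\omega^{3}=0$, which is common to both problems: by the Donaldson-Uhlenbeck-Yau correspondence \cite{UY}, the passage between unitary and complex gauge orbits and the slice construction for the unitary gauge group are identical for the two moduli problems. The comparison therefore reduces to the $(0,2)$-part, namely $F^{0,2}=\overline{\partial}_{A}a+a\wedge a=0$ for $\mathcal{M}_{c}^{bdl}$ against $\pi_{+}F^{0,2}=0$ for $\mathcal{M}_{c}^{DT_{4}}$, where $a\in\Omega^{0,1}(X,EndE)$ deforms $\overline{\partial}_{A}$ and is taken in Coulomb gauge $\overline{\partial}_{A}^{*}a=0$.

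The structural input that drives the argument is that $*_{4}=(\Omega\lrcorner)\circ*$ is assembled from the parallel tensors $g,\omega,\Omega$ of the Ricci-flat metric, so $\pi_{+}=\tfrac{1}{2}(1+*_{4})$ is compatible with the Hodge decomposition of $\Omega^{0,2}(X,EndE)$. Concretely, I would check that $\pi_{+}$ preserves the $\overline{\partial}_{A}$-harmonic forms $H^{0,2}(X,EndE)$ and commutes with the harmonic projection $H$ and the Green operator $G$; this is precisely what makes $\pi_{+}:H^{0,2}\to H^{0,2}_{+}$ well defined.

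For part (1) I would construct $\tilde{\tilde{\kappa}}$ by the usual Kuranishi procedure: solving the non-harmonic directions of $F^{0,2}=0$ by the implicit function theorem yields a unique $a(a_{0})$, depending on $a_{0}\in H^{0,1}$, for which $F^{0,2}(a(a_{0}))=HF^{0,2}(a(a_{0}))=:\tilde{\tilde{\kappa}}(a_{0})$ is purely harmonic. I claim the same $a(a_{0})$ solves the elimination problem for $DT_{4}$: using $\pi_{+}H=H\pi_{+}$ one gets $(1-H)\pi_{+}F^{0,2}(a(a_{0}))=\pi_{+}(1-H)F^{0,2}(a(a_{0}))=0$, and since the $DT_{4}$ linearization $\pi_{+}\overline{\partial}_{A}$ restricts to an isomorphism from the Coulomb slice onto the non-harmonic self-dual forms, $a(a_{0})$ is the unique $DT_{4}$ solution. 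The $DT_{4}$ Kuranishi map is then $\pi_{+}F^{0,2}(a(a_{0}))=\pi_{+}\tilde{\tilde{\kappa}}(a_{0})=\tilde{\tilde{\kappa}}_{+}(a_{0})$, which is (1). For part (2), since $\tilde{\tilde{\kappa}}_{+}=\pi_{+}\tilde{\tilde{\kappa}}$, every component of $\tilde{\tilde{\kappa}}_{+}$ is a linear combination of components of $\tilde{\tilde{\kappa}}$, so the ideal cutting out $\mathcal{M}_{c}^{DT_{4}}$ is contained in the ideal cutting out $\mathcal{M}_{c}^{bdl}$, and $\mathcal{M}_{c}^{bdl}\hookrightarrow\mathcal{M}_{c}^{DT_{4}}$ is a closed embedding of analytic spaces. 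Lemma 4.1 of \cite{caoleung} (that $F^{0,2}_{+}=0\Rightarrow F^{0,2}=0$ once $ch_{2}(E)\in H^{2,2}$) shows the two zero loci agree as sets, so this closed embedding is a continuous bijection with continuous inverse, hence a homeomorphism, even though $\mathcal{M}_{c}^{DT_{4}}$ may be non-reduced.

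The main obstacle is the elliptic statement hidden in part (1): that $\pi_{+}\overline{\partial}_{A}$, restricted to the Coulomb slice $\{\overline{\partial}_{A}^{*}a=0\}$, is an isomorphism onto the non-harmonic self-dual $(0,2)$-forms, equivalently that the ``half'' Dolbeault system governing $DT_{4}$ is elliptic with kernel $H^{0,1}$ and cokernel $H^{0,2}_{+}$. Establishing this requires controlling the compatibility of $*_{4}$ with the connection-coupled Hodge theory, including the Weitzenb\"ock curvature terms, so that the implicit-function-theorem solution of the $DT_{4}$ elimination really coincides with $a(a_{0})$ rather than merely solving a formally similar equation. Once this ellipticity and uniqueness are in place, the identification $\tilde{\tilde{\kappa}}_{+}=\pi_{+}\tilde{\tilde{\kappa}}$ and the rest of the argument are formal.
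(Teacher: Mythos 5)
The paper does not prove this statement: it is quoted verbatim as Theorem 1.1 of \cite{caoleung}, so there is no in-text argument here to compare against. Your outline is the standard Kuranishi-comparison argument and, judging from the surrounding discussion (Lemma 4.1 and Proposition 10.10 of \cite{caoleung} are invoked exactly as you use them), it is the approach of that reference; the two substantive points --- the ellipticity of the half-complex $\Omega^{0,0}\rightarrow\Omega^{0,1}\rightarrow\Omega^{0,2}_{+}$ with cohomology $H^{0,1}(X,EndE)$ and $H^{0,2}_{+}(X,EndE)$, and the uniqueness step identifying the two implicit-function-theorem solutions $a(a_{0})$ --- are correctly isolated rather than elided. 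The one piece of bookkeeping to add is that $*_{4}$ is conjugate-linear, so $\pi_{+}$ is only real-linear: in part (2) the components of $\tilde{\tilde{\kappa}}_{+}$ are real-linear combinations of the real and imaginary parts of the components of $\tilde{\tilde{\kappa}}$, and the resulting closed embedding is one of underlying real-analytic ($C^{\infty}$-scheme) structures, which is all the statement requires.
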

\begin{remark}
By Proposition 10.10 \cite{caoleung}, the map $\tilde{\tilde{\kappa}}$ satisfies $Q_{Serre}(\tilde{\tilde{\kappa}},\tilde{\tilde{\kappa}})\geq0$,
where $Q_{Serre}$ is the Serre duality pairing on $H^{0,2}(X,EndE)$.
\end{remark}
To define Donaldson type invariants using $\mathcal{M}_{c}^{DT_{4}}$, we need to give it a good compactification (such that it carries
a deformation invariant fundamental class). For this purpose, we introduce $\mathcal{M}_{c}(X,\mathcal{O}_{X}(1))$ or
simply $\mathcal{M}_{c}$ to be the Gieseker moduli space of $\mathcal{O}_{X}(1)$-stable sheaves on $X$ with given Chern character $c$.
Motivated by Theorem \ref{mo mDT4}, we make the following definition.
\begin{definition}\label{gene DT4 moduli}(\cite{caoleung})
We call a $C^{\infty}$-scheme, $\overline{\mathcal{M}}^{DT_{4}}_{c}$ generalized $DT_{4}$ moduli space if there exists a homeomorphism
\begin{equation}\mathcal{M}_{c}\rightarrow\overline{\mathcal{M}}^{DT_{4}}_{c}  \nonumber \end{equation}
such that at each closed point of $\mathcal{M}_{c}$, say $\mathcal{F}$, $\overline{\mathcal{M}}^{DT_{4}}_{c}$ is locally isomorphic
to $\kappa_{+}^{-1}(0)$, where
\begin{equation}\kappa_{+}=\pi_{+}\circ\kappa:Ext^{1}(\mathcal{F},\mathcal{F})\rightarrow Ext^{2}_{+}(\mathcal{F},\mathcal{F}),
\nonumber \end{equation}
$\kappa$ is a Kuranishi map at $\mathcal{F}$
and $Ext^{2}_{+}(\mathcal{F},\mathcal{F})$ is a half dimensional real subspace of $Ext^{2}(\mathcal{F},\mathcal{F})$ on which the
Serre duality quadratic form is real and positive definite.
\end{definition}
\begin{remark}${}$ \\
1. The existence of generalized $DT_{4}$ moduli spaces was proved by Borisov-Joyce \cite{bj}. The authors proved their existence
as real analytic spaces in certain cases and defined the corresponding virtual fundamental classes \cite{cao},\cite{caoleung}. \\
2. For fixed data $(X,\mathcal{O}_{X}(1),c)$, generalized $DT_{4}$ moduli spaces may not be unique. However,
they all carry the same virtual fundamental class \cite{bj}.
\end{remark}
The proof of Borisov-Joyce's gluing result is divided into two parts. Firstly, they use good local models of $\mathcal{M}_{c}$, i.e.
local 'Darboux charts' in the sense of Brav, Bussi and Joyce \cite{bbj}. Then they chose the half dimensional real subspace
$Ext^{2}_{+}(\mathcal{F},\mathcal{F})$ appropriately and use partition of unity and homotopical algebra to glue $\kappa_{+}$.
We state the analytic version of BBJ's local Darboux charts as follows.
\begin{theorem}\label{B-side local Darboux thm}(Brav-Bussi-Joyce \cite{bbj} Corollary 5.20, see also Theorem 10.7 \cite{caoleung})  ${}$ \\
Let $\mathcal{M}_{c}$ be a Gieseker moduli space of stable sheaves on a compact Calabi-Yau 4-fold $X$.
Then for any closed point $\mathcal{F}\in\mathcal{M}_{c}$, there exists an analytic neighborhood $U_{\mathcal{F}}\subseteq\mathcal{M}_{c}$,
a holomorphic map near the origin
\begin{equation}\kappa: Ext^{1}(\mathcal{F},\mathcal{F})\rightarrow Ext^{2}(\mathcal{F},\mathcal{F})   \nonumber\end{equation}
such that $Q_{Serre}(\kappa,\kappa)=0$ and $\kappa^{-1}(0)\cong U_{\mathcal{F}}$ as complex analytic spaces possibly with non-reduced structures,
where $Q_{Serre}$ is the Serre duality pairing on $Ext^{2}(\mathcal{F},\mathcal{F})$.
\end{theorem}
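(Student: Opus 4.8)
The plan is to realize the local structure of $\mathcal{M}_{c}$ at $\mathcal{F}$ through deformation theory and then extract $\kappa$ as a Kuranishi map, the substantive point being the isotropy identity $Q_{Serre}(\kappa,\kappa)=0$. First I would fix a Ricci-flat K\"ahler metric on $X$ and model the germ $(\mathcal{M}_{c},\mathcal{F})$ by the differential graded Lie algebra computing $\mathrm{RHom}(\mathcal{F},\mathcal{F})$; when $\mathcal{F}$ is a bundle this is the Dolbeault dg Lie algebra $(\Omega^{0,\bullet}(X,\mathcal{E}nd\,\mathcal{F}),\bar\partial_{A},[\,\cdot\,,\cdot\,])$ with cohomology $\mathrm{Ext}^{i}(\mathcal{F},\mathcal{F})=H^{0,i}$, and for a general stable sheaf I would use a resolution model of the same homotopy type. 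Serre duality on the Calabi-Yau $4$-fold gives $\mathrm{Ext}^{i}\cong(\mathrm{Ext}^{4-i})^{*}$; in particular $\mathrm{Ext}^{2}$ is self-dual and carries the symmetric nondegenerate pairing $Q_{Serre}(\alpha,\beta)=\int_{X}\mathrm{tr}(\alpha\wedge\beta)\wedge\Omega$.

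Next I would build $\kappa$ by the standard harmonic-gauge Kuranishi construction. Using the Green operator $G$, the formal adjoint $\bar\partial_{A}^{*}$ and harmonic projection $\mathcal{H}$, I would solve the gauge-fixed equation $a=a_{0}-\bar\partial_{A}^{*}G(a\wedge a)$ for a unique small $a=a(a_{0})$ depending holomorphically on the harmonic datum $a_{0}\in\mathrm{Ext}^{1}$, via the implicit function theorem together with elliptic estimates guaranteeing convergence and analyticity. Setting $\kappa(a_{0})=\mathcal{H}\big(a(a_{0})\wedge a(a_{0})\big)\in\mathrm{Ext}^{2}$, classical Kuranishi theory identifies $\kappa^{-1}(0)$ with a neighbourhood $U_{\mathcal{F}}\subseteq\mathcal{M}_{c}$ as complex analytic spaces, including the possibly non-reduced structure, since $\kappa$ represents the deformation functor. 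For non-locally-free $\mathcal{F}$ this step is done with the $L_{\infty}$ minimal model of $\mathrm{RHom}(\mathcal{F},\mathcal{F})$ in place of the harmonic model, so that $\kappa$ becomes the sum of transferred products $\sum_{n\ge 2}\tfrac{1}{n!}\ell_{n}(a_{0}^{\otimes n})$; because $a_{0}$ sits in degree $1$ every term lands in $\mathrm{Ext}^{2}$, which is exactly why a single map into $\mathrm{Ext}^{2}$ cuts out the moduli germ.

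The heart of the proof, and the step I expect to be the main obstacle, is the identity $Q_{Serre}(\kappa,\kappa)=0$. The point is that $Q_{Serre}$ is a \emph{cyclic} invariant pairing of degree $-4$ for the bracket: invariance $Q_{Serre}(\bar\partial_{A}\alpha,\beta)\pm Q_{Serre}(\alpha,\bar\partial_{A}\beta)=0$ follows from Stokes' theorem and $\bar\partial\Omega=0$, while cyclicity $Q_{Serre}([\alpha,\beta],\gamma)=\pm Q_{Serre}(\alpha,[\beta,\gamma])$ follows from the trace identity. Homotopy transfer keeps the minimal $L_{\infty}$ structure cyclic, and I would use this symmetry to show that $Q_{Serre}(\kappa(a_{0}),\kappa(a_{0}))$ equals an expression which integration by parts forces to vanish; concretely one reduces it to $\int_{X}\mathrm{tr}(a\wedge a\wedge a\wedge a)\wedge\Omega$ together with harmonic-correction terms that cancel once the gauge-fixing equation and $\bar\partial\Omega=0$ are used. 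Conceptually this is the classical-analytic shadow of the Brav-Bussi-Joyce $(-2)$-shifted Darboux theorem applied to the $(-2)$-shifted symplectic structure that Pantev-T\"oen-Vaqui\'e-Vezzosi place on the derived moduli of perfect complexes on a Calabi-Yau $4$-fold: it is the \emph{closedness} of that shifted symplectic form, not merely the pointwise self-duality of $\mathrm{Ext}^{2}$, that guarantees $\kappa$ is isotropic.

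Finally I would assemble the pieces, combining the Kuranishi identification $U_{\mathcal{F}}\cong\kappa^{-1}(0)$ from the second step with the isotropy from the third to obtain the stated local model. The two technical subtleties to watch are the convergence and regularity needed to pass from the formal to the genuine analytic germ together with the faithful matching of non-reduced structures (both controlled by elliptic regularity and comparison of the Kuranishi model with the deformation functor), and the careful bookkeeping of signs in the cyclic $L_{\infty}$ relations that makes $Q_{Serre}(\kappa,\kappa)=0$ come out exactly rather than up to a nonzero multiple.
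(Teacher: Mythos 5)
Your overall architecture (harmonic Kuranishi model, identification of $\kappa^{-1}(0)$ with the moduli germ, then isotropy of $\kappa$) is sound and the statement you prove is the right one, but on the two decisive points you take a genuinely different route from the paper. The paper's proof is a two-step shortcut: first it uses Seidel--Thomas twists to replace $\mathcal{F}$ by a holomorphic \emph{bundle} $E$ without changing the local moduli problem, which removes any need for an $L_\infty$ minimal model of $\mathrm{RHom}(\mathcal{F},\mathcal{F})$ and for verifying that such a model can be chosen cyclic; second, it obtains $Q_{Serre}(\kappa,\kappa)=0$ from Chern--Weil theory: $\int_X\mathrm{tr}(F^{0,2}_{A+a}\wedge F^{0,2}_{A+a})\wedge\Omega$ is the $(0,4)$-component of $8\pi^{2}\int_X ch_2(E)\wedge\Omega$, a topological constant, which vanishes because $ch_2(E)\in H^{2,2}$ for a holomorphic bundle. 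Your argument instead derives isotropy from cyclicity of the Serre pairing at chain level together with homotopy transfer, i.e.\ from the closedness of the PTVV form. Both mechanisms ultimately show that $x\mapsto Q_{Serre}(\kappa(x),\kappa(x))$ is \emph{constant} and then evaluate at the base point; the paper's version buys a very short proof in the bundle case plus a clean reduction for arbitrary stable sheaves, while yours is more intrinsic and does not use that $ch_2$ is of Hodge type $(2,2)$.

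One step of your write-up is under-specified at exactly the crux. The reduction of $Q_{Serre}(\kappa,\kappa)$ to $\int_X\mathrm{tr}(a\wedge a\wedge a\wedge a)\wedge\Omega$ ``plus harmonic correction terms that cancel'' is not carried out, and the correction terms are not innocuous: writing $F^{0,2}=\kappa(a_0)+\bar\partial_A^{*}G\,\bar\partial_A(a\wedge a)$, the cross term with $\kappa$ dies because a harmonic form is Serre-orthogonal to a coexact one (using that $\Omega$ is parallel), but the self-pairing of the coexact piece has no reason to vanish term by term, and integrating by parts against $\bar\partial^{*}$ inside the wedge-trace pairing requires the K\"ahler identities rather than just $\bar\partial\Omega=0$. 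The clean way to finish within your framework is not this expansion but the constancy argument: differentiate $f(x)=\langle\kappa(x),\kappa(x)\rangle$ in a direction $y\in\mathrm{Ext}^{1}$, use cyclic symmetry to rewrite $D_yf$ as $2\langle y,\sum_m\tfrac{1}{(m-1)!}\ell_m(x^{\otimes(m-1)},\kappa(x))\rangle$, and observe that the inner sum vanishes by the transferred $L_\infty$ (Bianchi) relations, so $f\equiv f(0)=0$. You should either supply this or adopt the paper's $ch_2(E)\wedge\Omega=0$ argument; likewise, for non-locally-free $\mathcal{F}$ you should either prove the existence of a convergent cyclic model of $\mathrm{RHom}(\mathcal{F},\mathcal{F})$ or invoke the Seidel--Thomas reduction as the paper does.
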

\begin{proof}(See the Proof of Theorem 10.7 \cite{caoleung}) The point is that we can use Seidel-Thomas twists \cite{js},\cite{st}
transfer the problem to a problem on moduli spaces of holomorphic bundles and then notice $ch_{2}(E)\wedge \Omega=0$ for holomorphic bundle $E$,
where $\Omega$ is the holomorphic top form.
\end{proof}
To make sense of virtual fundamental classes of generalized $DT_{4}$ moduli spaces as homology classes in $\mathcal{M}_{c}$'s, one
would in general rely on Joyce's D-manifolds theory \cite{joyce1} or Fukaya-Oh-Ohta-Ono's theory of Kuranishi spaces \cite{fooo}
or Hofer's polyfolds theory \cite{hofer} (see \cite{yang} for a comparison between them).
Assuming this part, which is claimed by Borisov-Joyce, we have
\begin{theorem}(Borisov-Joyce \cite{bj}, \cite{joyce})\label{DT4 main thm} ${}$\\
Let $X$ be a complex projective Calabi-Yau 4-fold, and $\mathcal{M}_{c}$ be a Gieseker moduli space of stable sheaves which is compact
(it is true if the degree and rank of sheaves are coprime). Then there exists a generalized $DT_{4}$ moduli space $\overline{\mathcal{M}}^{DT_{4}}_{c}$. If we further assume $\overline{\mathcal{M}}^{DT_{4}}_{c}$ is orientable in the sense of D-manifold, then the virtual fundamental class of $\overline{\mathcal{M}}^{DT_{4}}_{c}$ exists and is a well-defined homology class, i.e.
\begin{equation}[\overline{\mathcal{M}}^{DT_{4}}_{c}]^{vir}\in H_{*}(\mathcal{M}_{c},\mathbb{Z}),   \nonumber\end{equation}
which coincides with earlier definitions of $DT_{4}$ virtual cycles (Definition 5.3, 5.12, 5.14 \cite{caoleung}).

We can furthermore define the $DT_{4}$ invariant by pairing the above cycle with $\mu$-map as in Definition 5.15 \cite{caoleung}. With appropriate choice of orientations, $DT_{4}$ invariants are invariant under deformations of complex structures of $X$.
\end{theorem}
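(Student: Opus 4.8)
The plan is to follow the Borisov-Joyce construction, taking the local Darboux charts of Theorem \ref{B-side local Darboux thm} as the starting point. First I would produce, at every closed point $\mathcal{F}\in\mathcal{M}_c$, a real local model for the generalized $DT_4$ moduli space. By Theorem \ref{B-side local Darboux thm} there is a holomorphic Kuranishi map $\kappa: Ext^1(\mathcal{F},\mathcal{F})\rightarrow Ext^2(\mathcal{F},\mathcal{F})$ with $Q_{Serre}(\kappa,\kappa)=0$ and $\kappa^{-1}(0)\cong U_{\mathcal{F}}$. The vanishing of the complex Serre form along the image of $\kappa$ is exactly what lets us choose a half-dimensional real subspace $Ext^2_+(\mathcal{F},\mathcal{F})$ on which $Q_{Serre}$ is real and positive definite, and to set $\kappa_+=\pi_+\circ\kappa$ as in Definition \ref{gene DT4 moduli}. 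Since the kernel of $\pi_+$ is the complementary subspace on which $Q_{Serre}$ is real and negative definite, the condition $Q_{Serre}(\kappa,\kappa)=0$ forces $\kappa_+^{-1}(0)$ to coincide with $\kappa^{-1}(0)$ as sets, so these real models recover $\mathcal{M}_c$ topologically point by point.

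The heart of the argument is to glue the local real Kuranishi data $\big(Ext^1,Ext^2_+,\kappa_+\big)$ into a single global object. Here I would follow the two-step strategy indicated before Theorem \ref{B-side local Darboux thm}: one first arranges the positive-definite subspaces $Ext^2_+$ compatibly on overlaps using a partition of unity, and then applies the homotopical-algebra machinery of Borisov-Joyce to patch the maps $\kappa_+$, yielding a compact $C^\infty$-scheme $\overline{\mathcal{M}}^{DT_4}_c$ together with a homeomorphism from $\mathcal{M}_c$ that is locally isomorphic to $\kappa_+^{-1}(0)$. This is precisely the existence statement, and I expect the gluing to be the main obstacle: the maximal positive subspace $Ext^2_+$ is not canonical and varies from chart to chart, so $\kappa_+$ is only well-defined up to a contractible space of choices, and controlling these choices coherently across all overlaps is exactly what forces one out of elementary patching and into derived/homotopical techniques.

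Granting orientability in the D-manifold sense, the local models then assemble into a compact oriented derived manifold (equivalently a Kuranishi space or polyfold) of the expected real dimension, to which the general virtual-class machinery of \cite{joyce1}, \cite{fooo}, \cite{hofer} applies; compactness of $\mathcal{M}_c$, e.g. under the coprimality hypothesis, guarantees the resulting class lives in ordinary homology, $[\overline{\mathcal{M}}^{DT_4}_c]^{vir}\in H_*(\mathcal{M}_c,\mathbb{Z})$. Compatibility with the earlier constructions is a purely local check: in the smooth and obstruction-bundle situations of Definitions 5.3, 5.12 and 5.14 of \cite{caoleung}, the class carried by $\kappa_+^{-1}(0)$ reduces to the Euler class of the self-dual subbundle of the obstruction space, which is how those cycles were defined. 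Finally, for deformation invariance I would realize a one-parameter deformation of the complex structure on $X$ as a family of moduli spaces, hence as a compact oriented cobordism of the associated derived manifolds; since the virtual class is a cobordism invariant, pairing it with the $\mu$-map as in Definition 5.15 of \cite{caoleung} produces $DT_4$ invariants that are unchanged under the deformation, once orientations are chosen consistently across the family.
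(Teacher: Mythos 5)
Your proposal matches the paper exactly in spirit: the paper offers no proof of this theorem, attributing it to Borisov--Joyce and only sketching their strategy in the surrounding text (local Darboux charts from Theorem \ref{B-side local Darboux thm}, a choice of half-dimensional positive-definite subspaces $Ext^{2}_{+}$ glued via partition of unity and homotopical algebra, and the D-manifold/Kuranishi/polyfold machinery of \cite{joyce1}, \cite{fooo}, \cite{hofer} for the virtual class), which is precisely the outline you give. Your additional observation that $Q_{Serre}(\kappa,\kappa)=0$ forces $\kappa_{+}^{-1}(0)=\kappa^{-1}(0)$ as sets is correct and is the same mechanism underlying Theorem \ref{mo mDT4}(2).
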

Because of the Serre duality for $Ext^{*}(\mathcal{F},\mathcal{F})$, the existence of an orientation on a
generalized $DT_{4}$ moduli space $\overline{\mathcal{M}}^{DT_{4}}_{c}$ (in the sense of D-manifold) is equivalent to the existence of
a reduction of the structure group of $(\mathcal{L}_{X},Q_{Serre})$ to $SO(1,\mathbb{C})$, where $\mathcal{L}_{X}$ is the determinant line bundle with $\mathcal{L}_{X}|_{\mathcal{F}}\cong(\wedge^{top}Ext^{even}(\mathcal{F},\mathcal{F}))^{-1}\otimes\wedge^{top}Ext^{odd}
(\mathcal{F},\mathcal{F})$ and $Q_{Serre}$ is the Serre duality quadratic form on it.
\begin{theorem}(Theorem 2.2 \cite{caoleung3})\label{general existence of ori}
Let $X$ be a compact Calabi-Yau 4-fold with $H_{odd}(X,\mathbb{Z})=0$. For any Gieseker moduli space
$\mathcal{M}_{c}$ of stable sheaves, the structure group of $(\mathcal{L}_{X},Q_{Serre})$ can be reduced to $SO(1,\mathbb{C})$.
\end{theorem}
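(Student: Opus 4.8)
The plan is to reformulate the reduction of structure group as the vanishing of a single $\mathbb{Z}_{2}$-valued obstruction class and then to compute that class by a universal family-index argument. Since $O(1,\mathbb{C})\cong\mathbb{Z}_{2}$ and $SO(1,\mathbb{C})$ is trivial, a reduction of the structure group of $(\mathcal{L}_{X},Q_{Serre})$ to $SO(1,\mathbb{C})$ is the same as a global $Q_{Serre}$-unit section, and the obstruction to its existence is the first Stiefel-Whitney class $w\triangleq w_{1}(\mathcal{L}_{X}^{\mathbb{R}})\in H^{1}(\mathcal{M}_{c},\mathbb{Z}_{2})$ of the real orientation line bundle $\mathcal{L}_{X}^{\mathbb{R}}$ whose $\{\pm1\}$-valued transition functions are determined by local $Q_{Serre}$-unit frames. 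The goal is thus to prove $w=0$. Note that $c_{1}(\mathcal{L}_{X})=\beta(w)$ for the integral Bockstein $\beta$, so the vanishing of $c_{1}(\mathcal{L}_{X})$ would be a mere consequence, not a substitute; the genuine content lives at the $\mathbb{Z}_{2}$-level rather than at the $c_{1}$-level, which already signals that the argument must retain integral/torsion information.

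First I would set up universality and excision. The determinant line bundle $\mathcal{L}_{X}$, together with its Serre quadratic form, is manufactured from the universal sheaf on $X\times\mathcal{M}_{c}$ as the determinant of the $K$-theory class $-Rp_{*}\big(R\mathcal{H}om(\mathbb{E},\mathbb{E})\big)$, where $p$ is the projection to $\mathcal{M}_{c}$; replacing $\mathbb{E}$ by a locally free resolution turns this into an honest family-index construction to which Grothendieck-Riemann-Roch applies. This construction extends verbatim over the topological configuration space $\mathcal{B}_{c}$ of gauge-equivalence classes of the fixed topological type $c$ (homotopy equivalent to a component of $\mathrm{Map}(X,BU)$), into which $\mathcal{M}_{c}$ embeds. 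Hence $w=\iota^{*}\widetilde{w}$ for a universal class $\widetilde{w}\in H^{1}(\mathcal{B}_{c},\mathbb{Z}_{2})$, and it suffices to prove $\widetilde{w}=0$ on $\mathcal{B}_{c}$.

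Next I would compute $\widetilde{w}$ by the family index theorem. Writing the index class as $p_{!}\big(ch(R\mathcal{H}om(\mathbb{E},\mathbb{E}))\cdot\mathrm{Td}(X)\big)$ and decomposing via Künneth, its degree-one part on $\mathcal{B}_{c}$ is a sum of slant products of universal Chern-character components against the homology of $X$. Because the underlying operator is the $\mathbb{C}$-linear Dolbeault operator, its index carries a canonical complex orientation, so the only source of a nontrivial real orientation twist is the self-dual (Serre) structure on the middle cohomology, whose contribution to degree one is governed entirely by the odd homology of $X$. The key point I would establish is that this contribution is the mod-$2$ reduction of an integral class lying in the image of the degree-one slant ($\mu$-)map from $H_{odd}(X,\mathbb{Z})$; under the hypothesis $H_{odd}(X,\mathbb{Z})=0$ this integral class vanishes, forcing $\widetilde{w}=0$ and hence $w=0$, which is the desired reduction to $SO(1,\mathbb{C})$.

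The main obstacle I anticipate is precisely the step isolating the genuine $\mathbb{Z}_{2}$-orientation obstruction and showing it is pulled back from $H_{odd}(X,\mathbb{Z})$ rather than merely from $H_{odd}(X,\mathbb{Z}_{2})$. This is what makes the integral hypothesis $H_{odd}(X,\mathbb{Z})=0$ (and not a mod-$2$ condition) the natural one, and it forces one to track carefully how the canonical complex orientation of the Dolbeault index interacts with the real structure introduced by Serre duality, separating out the piece that can be orientation-reversing. A secondary technical point is to justify the universality and excision for sheaf—rather than bundle—moduli; this is handled by the locally free resolution above, so that both the Grothendieck-Riemann-Roch computation and the embedding $\mathcal{M}_{c}\hookrightarrow\mathcal{B}_{c}$ are legitimate.
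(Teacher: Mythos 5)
Your reduction of the statement to the vanishing of a single class $w\in H^{1}(\mathcal{M}_{c},\mathbb{Z}_{2})$ (since $O(1,\mathbb{C})\cong\mathbb{Z}_{2}$ and $SO(1,\mathbb{C})$ is trivial), and your idea of realizing $w$ as the restriction of a universal class $\widetilde{w}$ on the gauge-theoretic configuration space $\mathcal{B}_{c}$, both match the strategy the paper imports from \cite{caoleung3}. The genuine gap is the step where you propose to \emph{compute} $\widetilde{w}$ by the Atiyah--Singer family index theorem. That theorem lands in rational cohomology: it determines the Chern character of the index bundle, hence $c_{1}(\mathcal{L}_{X})$ modulo torsion, but it is structurally blind to the $\mathbb{Z}_{2}$-valued class $\widetilde{w}$ --- which, as you yourself note, is exactly where the content lies. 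Your proposed remedy, that the degree-one contribution ``is the mod-$2$ reduction of an integral class lying in the image of the degree-one slant map from $H_{odd}(X,\mathbb{Z})$,'' is asserted rather than proved, and no mechanism is offered for extracting such an integral lift from an index computation. The argument actually used (visible in the commented-out proof of Theorem \ref{thm on ori} in the source, and in \cite{caoleung3}) sidesteps any identification of the class: one computes the homotopy type of the configuration space via the Federer spectral sequence, obtaining $\pi_{1}(\mathrm{Map}_{P}(X,BSU(N)))\cong H^{3}(X,\mathbb{Z})\oplus H^{5}(X,\mathbb{Z})\oplus H^{7}(X,\mathbb{Z})$ for $N\gg0$; under the hypothesis $H_{odd}(X,\mathbb{Z})=0$ this group, and with it $H^{1}(\mathcal{B}_{c},\mathbb{Z}_{2})\cong \mathrm{Hom}(\pi_{1}(\mathcal{B}_{c}),\mathbb{Z}_{2})$, vanishes, so $\widetilde{w}=0$ simply because its ambient group is zero. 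The family index theorem enters only in the \emph{relative} statement (Theorem \ref{thm on ori}), to compare first Chern classes over an embedded surface after torsion-freeness of $H^{2}(\mathcal{B}_{X}^{*},\mathbb{Z})$ has been established; it plays no role in killing the $\mathbb{Z}_{2}$ obstruction.

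A secondary gap: to place a Gieseker moduli space of \emph{sheaves} inside a connected configuration space of connections on a fixed $SU(N)$-bundle, a locally free resolution is not enough --- the terms of a resolution vary over the moduli space and do not furnish a single topological bundle of fixed type, so the embedding $\mathcal{M}_{c}\hookrightarrow\mathcal{B}_{c}$ you invoke is not yet constructed. The paper handles this by Seidel--Thomas twists (see the proof of Theorem \ref{B-side local Darboux thm}), which replace a sheaf by a holomorphic bundle in a way that preserves the determinant line bundle together with its Serre pairing. Without this step your universality and excision claims for sheaf moduli are unsupported.
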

In the case when the Gieseker moduli space $\mathcal{M}_{c}$  of stable sheaves on $X$ is smooth (i.e. Kuranishi maps are zero), the obstruction
sheaf $Ob$ such that $Ob|_{\mathcal{F}}\cong Ext^{2}(\mathcal{F},\mathcal{F})$ is a bundle with Serre duality quadratic form.
There exists a real subbundle $Ob_{+}$ with positive definite quadratic form such that $Ob\cong Ob_{+}\otimes_{\mathbb{R}}\mathbb{C}$
as vector bundles with quadratic forms. We call $Ob_{+}$ the self-dual obstruction bundle. By Definition 5.12 \cite{caoleung},
the virtual fundamental class of $\overline{\mathcal{M}}_{c}^{DT_{4}}$ is the Poincar\'{e} dual of
the Euler class of the self-dual obstruction bundle (if it is orientable), i.e.
\begin{equation}\label{half euler class}[\overline{\mathcal{M}}^{DT_{4}}_{c}]^{vir}=PD(e(Ob_{+}))\in H_{*}(\mathcal{M}_{c},\mathbb{Z}). \end{equation}
This motivates later definitions of relative $DT_{4}$ invariants.

\subsection{Some basic facts in $DT_{3}$ theory}
Moduli spaces of simple sheaves on $CY_{3}$'s are locally critical points of holomorphic functions \cite{bbj}, \cite{js} and
we can consider the perverse sheaves of vanishing cycles of these functions. The expected cohomology which categorifies
$DT_{3}$ invariant is defined by first gluing these local perverse sheaves and then taking its hypercohomology.
\begin{theorem}(Brav-Bussi-Dupont-Joyce-Szendroi \cite{bbdjs}, Kiem-Li \cite{kl})\label{DT3 coho} ${}$ \\
Let $Y$ be a Calabi-Yau 3-fold over $\mathbb{C}$, and $\mathcal{M}$ a classical moduli scheme of simple coherent sheaves or simple complexes
of coherent sheaves on $Y$, with natural (symmetric) obstruction theory $\phi: \mathcal{E}^{\bullet}\rightarrow \mathbb{L}^{\bullet}_{\mathcal{M}}$.
Suppose we are given a square root of $det(\mathcal{E}^{\bullet})$, then there exists a perverse sheave $\mathcal{P}^{\bullet}_{\mathcal{M}}$
uniquely up to canonical isomorphism such that the Euler characteristic of its hypercohomology is the Donaldson-Thomas invariant \cite{th}.
\end{theorem}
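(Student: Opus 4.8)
The plan is to construct $\mathcal{P}^{\bullet}_{\mathcal{M}}$ by gluing together local perverse sheaves of vanishing cycles, using the square root of $\det(\mathcal{E}^{\bullet})$ to resolve the sign ambiguity that obstructs such a gluing. First I would record the local structure: since $\mathcal{M}$ parametrizes simple objects on a Calabi-Yau $3$-fold, the symmetric obstruction theory $\phi$ makes $\mathcal{M}$ locally isomorphic, as a $\mathbb{C}$-scheme, to the critical locus $\mathrm{Crit}(f)$ of a holomorphic function $f:U\rightarrow\mathbb{C}$ on a smooth scheme $U$, the self-duality reflecting Serre duality $Ext^{1}(\mathcal{F},\mathcal{F})\cong Ext^{2}(\mathcal{F},\mathcal{F})^{*}$. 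This is the $CY_{3}$ analogue of Theorem \ref{B-side local Darboux thm} and follows from \cite{bbj}, \cite{js}. On each chart $(U,f)$ one has the perverse sheaf of vanishing cycles $PV^{\bullet}_{U,f}=\phi_{f}\big(\mathbb{Q}_{U}[\dim U]\big)|_{\mathrm{Crit}(f)}$, whose pointwise Euler characteristic is, up to the standard sign, the Behrend function $\nu_{\mathcal{M}}$.

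The heart of the argument is to compare $PV^{\bullet}_{U_{1},f_{1}}$ and $PV^{\bullet}_{U_{2},f_{2}}$ on the overlap of two charts. I would run a Thom--Sebastiani / stabilization analysis: any two presentations of the same germ as a critical locus become isomorphic after adding nondegenerate quadratic forms in auxiliary variables, and the vanishing cycle sheaf of a stabilized function $f\boxplus q$ differs from that of $f$ by a shift together with a twist by the orientation $\mathbb{Z}_{2}$-local system of $q$. Tracking this through an arbitrary change of chart shows that $PV^{\bullet}_{U_{1},f_{1}}$ and $PV^{\bullet}_{U_{2},f_{2}}$ agree canonically up to a twist by a principal $\mathbb{Z}_{2}$-bundle on the overlap; intrinsically this $\mathbb{Z}_{2}$-bundle records square roots of the transition data for the virtual canonical line bundle $\det(\mathcal{E}^{\bullet})|_{\mathcal{M}^{red}}$.

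The chosen square root of $\det(\mathcal{E}^{\bullet})$ then trivializes exactly this $\mathbb{Z}_{2}$-ambiguity: it determines on each chart a compatible identification of the twisting bundle with the constant sheaf, and hence rigidifies the comparison isomorphisms on double overlaps. I would next verify the cocycle condition on triple overlaps, where the coherence of the square-root data is what is used, so that the local sheaves $PV^{\bullet}$ together with these isomorphisms form descent data. Since perverse sheaves and their morphisms form a stack for the analytic (or \'etale) topology, this descent data glues to a global perverse sheaf $\mathcal{P}^{\bullet}_{\mathcal{M}}$, unique up to canonical isomorphism precisely because the gluing isomorphisms are themselves canonical once the square root is fixed.

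Finally, because the construction is local and the pointwise Euler characteristic of each $PV^{\bullet}$ is the Behrend function, we obtain $\sum_{i}(-1)^{i}\dim\mathbb{H}^{i}(\mathcal{M},\mathcal{P}^{\bullet}_{\mathcal{M}})=\chi(\mathcal{M},\nu_{\mathcal{M}})$, which equals the $DT_{3}$ invariant by Behrend's theorem \cite{behrend}, \cite{th}. The hard part will be the overlap comparison in the second step: controlling precisely how the vanishing cycle perverse sheaf transforms under an arbitrary isomorphism of critical-locus presentations, identifying the resulting $\mathbb{Z}_{2}$-ambiguity canonically with the square-root data, and checking the cocycle condition. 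This demands a functorial Thom--Sebastiani argument together with careful bookkeeping of the sign coming from the orientation of the quadratic forms, and it is the genuinely delicate technical core of \cite{bbdjs} and \cite{kl}.
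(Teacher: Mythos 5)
Your outline is correct and follows exactly the strategy the paper itself indicates for this cited result: local critical charts from \cite{bbj}, \cite{js}, perverse sheaves of vanishing cycles, gluing controlled by a $\mathbb{Z}_{2}$-bundle of orientations trivialized by the chosen square root of $\det(\mathcal{E}^{\bullet})$, descent for perverse sheaves, and Behrend's theorem to identify the hypercohomology Euler characteristic with the $DT_{3}$ invariant. The paper gives no proof beyond this one-paragraph sketch and defers entirely to \cite{bbdjs} and \cite{kl}, so there is nothing further to compare.
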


\subsection{An overview of TQFT type structures in $DT_{4}$-$DT_{3}$ theories }
In this subsection, we give an overview of TQFT structures in gauge theories on Calabi-Yau 3-folds and 4-folds.

We take a smooth (Calabi-Yau) 3-fold $Y$ in a complex projective 4-fold $X^{+}$ as its anti-canonical divisor, and consider a moduli space
$\mathfrak{M}_{X^{+}}$ of stable bundles with fixed Chern classes on $X^{+}$ which has a well-defined restriction morphism
\begin{equation}r: \mathfrak{M}_{X^{+}}\rightarrow \mathfrak{M}_{Y}   \nonumber \end{equation}
to a moduli space of stable sheaves on $Y$. This would determine a class
$v(\mathfrak{M}_{X^{+}})\in \mathbb{H}^{*}(\mathcal{P}^{\bullet}_{\mathfrak{M}_{Y}})$.

Given another complex projective 4-fold $X^{-}$ which contains $Y$ as its anti-canonical divisor,
we form a singular space $X_{0}=X^{+}\cup_{Y}X^{-}$. When $X_{0}$ admits a smooth deformation $X_{t}$, $X_{t}$ will be a family of $CY_{4}$'s
provided that the normal bundle of $Y$ in $X^{\pm}$ is trivial. As the perverse sheaf in Theorem \ref{DT3 coho} is self-dual under the Verdier duality,
i.e. $\mathbb{D}_{\mathfrak{M}_{Y}}(\mathcal{P}^{\bullet}_{\mathfrak{M}_{Y}})\cong \mathcal{P}^{\bullet}_{\mathfrak{M}_{Y}}$ \cite{bbdjs}, we
can define $\langle v(\mathfrak{M}_{X^{+}}),v(\mathfrak{M}_{X^{-}})\rangle$ using the Verdier duality on
$\mathbb{H}^{*}(\mathfrak{M}_{Y},\mathcal{P}^{\bullet}_{\mathfrak{M}_{Y}})$ as long as $\mathfrak{M}_{Y}$ is compact.

Ignoring the stability issue and contributions from general coherent sheaves, we ask the following question which can be regarded as a \textit{complexification} of Chern-Simons-Donaldson-Floer TQFT structure for 3 and 4-manifolds (see Atiyah \cite{atiyah0}).
\begin{question}\label{DT4-DT3 gluing}
What is the relation between $DT_{4}$ invariants and relative $DT_{4}$ invariants, namely, comparing $DT_{4}(\mathfrak{M}_{X_{t}})$ with $\langle v(\mathfrak{M}_{X^{+}}),v(\mathfrak{M}_{X^{-}})\rangle$?
\end{question}

\section{Relative $DT_{4}$ invariants for holomorphic bundles}

\subsection{Definitions of relative $DT_{4}$ invariants}
In this section, we restrict to some good cases and define rigorously the relative $DT_{4}$ invariant mentioned before, i.e.
$v(\mathfrak{M}_{X^{+}})\in \mathbb{H}^{*}(\mathcal{P}^{\bullet}_{\mathfrak{M}_{Y}})$. To have a well-defined restriction map, in this section, we assume all Gieseker moduli spaces of semi-stable sheaves on 4-folds consist of slope-stable bundles only. This will serve as a
model for the later study of relative $DT_{4}$ invariants for ideal sheaves.

Let $Y$ be a smooth anti-canonical divisor of a smooth projective 4-fold $X$, $\mathfrak{M}_{X}$ be a Gieseker moduli space which is admissible with respect to $(X,Y)$ (see Definition \ref{admissible}), and
\begin{equation}r: \mathfrak{M}_{X}\rightarrow \mathfrak{M}_{Y} \nonumber \end{equation}
be the restriction morphism to a Gieseker moduli space on $Y$.
We recall the following criterion which ensures that we have such morphism $r$ in many cases.
\begin{theorem}(Flenner \cite{flenner})\label{restriction thm} ${}$ \\
Let $(X,\mathcal{O}_{X}(1))$ be a complex $n$-dimensional normal projective variety with $\mathcal{O}_{X}(1)$ very ample.
We take $\mathcal{F}$ to be a $\mathcal{O}_{X}(1)$-slope semi-stable torsion-free sheaf of rank $r$. $d$ and $1\leq c\leq n-1$ are integers such that
\begin{equation}[\left(\begin{array}{l}n+d \\ \quad d\end{array}\right)-cd-1]/d>\deg(\mathcal{O}_{X}(1))\cdot\max(\frac{r^{2}-1}{4},1).
\nonumber \end{equation}
Then for a generic complete intersection $Y=H_{1}\cap\cdot\cdot\cdot\cap H_{c}$ with $H_{i}\in |\mathcal{O}_{X}(d)|$,
$\mathcal{F}|_{Y}\triangleq \mathcal{F}\otimes_{\mathcal{O}_{X}} \mathcal{O}_{Y}$ is $\mathcal{O}_{X}(1)|_{Y}$-slope semi-stable on $Y$.
\end{theorem}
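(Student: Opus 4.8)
The statement is Flenner's restriction theorem, so the plan is to run the standard slope-destabilization argument and locate where the stated numerical inequality is forced. The plan is to argue by contradiction on $\mu$-semistability and to reduce the complete-intersection case to repeated restriction to a single generic divisor. First I would reduce $c$ by induction: writing $Y = Z\cap H_c$ with $Z = H_1\cap\cdots\cap H_{c-1}$ and $H_c\in|\mathcal{O}_X(d)|$ generic, it suffices to show that $\mu$-semistability is preserved under restriction to a generic member of $|\mathcal{O}_X(d)|$, while tracking how the hypothesis degrades at each step (the shift $cd$ in the numerator records exactly this bookkeeping). For generic $Y$ the restriction $\mathcal{F}|_Y$ is again torsion-free, so semistability is the only issue. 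On $Y$ one has $\mathrm{rk}(\mathcal{F}|_Y)=r$ and, with respect to $\mathcal{O}_X(1)|_Y$, $\mu(\mathcal{F}|_Y)=d^{c}\mu(\mathcal{F})$; hence if $\mathcal{F}|_Y$ were not semistable, its maximal destabilizing subsheaf $\mathcal{G}_Y\subset\mathcal{F}|_Y$ (the first step of the Harder-Narasimhan filtration) would be saturated of some rank $0<s<r$ with $\mu(\mathcal{G}_Y)>\mu(\mathcal{F}|_Y)$, and would be uniquely determined by $Y$.

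Next I would organize these destabilizers into a family. Over a dense open subset $U$ of the parameter space of such complete intersections, uniqueness of the maximal destabilizer lets me spread the $\mathcal{G}_Y$ out as a rational section of a relative Quot scheme $\mathrm{Quot}(\mathcal{F}|_{\mathcal{Y}/U})$ over $U$. To make this a finite-type problem I would first establish boundedness of the possible $\mathcal{G}_Y$: their slopes are pinned from below by $\mu(\mathcal{F}|_Y)$ and from above by an explicit bound coming from the Grauert-M\"{u}lich theorem, which controls how far apart the Harder-Narasimhan slopes of a restriction of a $\mu$-semistable sheaf can spread. The worst-case spread for a rank-$s$ sub-object is governed by $s(r-s)\le\tfrac{r^{2}-1}{4}$ together with the degree $\deg(\mathcal{O}_X(1))=\mathcal{O}_X(1)^{n}$, which is precisely the quantity $\deg(\mathcal{O}_X(1))\cdot\max(\tfrac{r^{2}-1}{4},1)$ appearing on the right-hand side (the $\max$ with $1$ is a technical floor handling small rank).

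The heart of the argument is then a dimension count comparing this family of destabilizers against the linear system itself. Via a generic finite projection $X\to\mathbb{P}^n$, the pullbacks of degree-$d$ hypersurfaces form a linear system of projective dimension $\binom{n+d}{d}-1$, and these already suffice to cut out generic complete intersections; this is where $\binom{n+d}{d}$ enters. The locus of hypersurfaces admitting a destabilizing subsheaf of the constrained numerical type has dimension bounded in terms of $d$, $\deg(\mathcal{O}_X(1))$ and $\tfrac{r^{2}-1}{4}$, and the inequality
\[
\Big[\binom{n+d}{d}-cd-1\Big]\big/d \;>\; \deg(\mathcal{O}_X(1))\cdot\max\!\Big(\tfrac{r^{2}-1}{4},1\Big)
\]
is exactly the threshold beyond which this family is too large unless the $\mathcal{G}_Y$ all arise by restriction from a single saturated subsheaf $\mathcal{G}\subset\mathcal{F}$ on $X$. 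In that case $\mu(\mathcal{G})\ge\mu(\mathcal{F})$ with $0<\mathrm{rk}(\mathcal{G})<r$, contradicting the $\mu$-semistability of $\mathcal{F}$ on $X$; this contradiction forces $\mathcal{F}|_Y$ to be semistable for generic $Y$.

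I expect the main obstacle to be exactly this last estimate: proving boundedness of the family of candidate destabilizing subsheaves, bounding the dimension of the relevant Quot scheme and of the locus in $|\mathcal{O}_X(d)|^{\times c}$ over which a destabilizer persists, and matching these against $\binom{n+d}{d}$ so that the \emph{specific} constant $\tfrac{r^{2}-1}{4}$ and the shift $cd+1$ emerge rather than some weaker bound. Since this is Flenner's theorem, I would ultimately appeal to the precise estimates of \cite{flenner} for these steps, but the structural route — contrapositive via the maximal destabilizer, Grauert-M\"{u}lich slope control, and the Quot-scheme-versus-linear-system dimension count — is the natural one.
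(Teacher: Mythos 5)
The paper does not prove this statement: it is imported verbatim from Flenner \cite{flenner} and used as a black box, its only role being to guarantee that restriction morphisms $r:\mathfrak{M}_{X}\rightarrow\mathfrak{M}_{Y}$ as in Definition \ref{admissible} exist in many examples. So there is no internal proof to compare yours against, and in that sense citing \cite{flenner} (as you do at the end) is exactly what the paper itself does.

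As a reconstruction of the proof, your outline is a plan rather than a proof: every quantitatively hard step --- boundedness of the candidate destabilizers, the dimension estimate, and the emergence of the specific constants $\frac{r^{2}-1}{4}$ and $cd+1$ --- is explicitly deferred to the reference, so nothing is established beyond what the citation already provides. Moreover, the mechanism you describe (a Quot-scheme-versus-linear-system dimension count forcing the destabilizers to come from a single subsheaf on $X$) is really the Mehta--Ramanathan strategy, which yields restriction theorems only for sufficiently large unspecified degree and does not produce Flenner's effective bound. Flenner's actual argument works on the incidence variety $Z\subseteq X\times S$ of the family of complete intersections: a destabilizing subsheaf of the generic restriction spreads out to a subsheaf of the pullback of $\mathcal{F}$ to $Z$, and the numerical hypothesis is precisely what forces the relative Harder--Narasimhan filtration to descend along $Z\rightarrow X$, via a Grauert--M\"{u}lich-type bound on the maximal slope of the relative differentials of $Z/X$; the binomial coefficient $\binom{n+d}{d}$ enters as the dimension of the space of degree-$d$ forms pulled back from a generic finite projection $X\rightarrow\mathbb{P}^{n}$, which controls that bound, not as the dimension of a locus in the linear system to be compared against a family of Quot points. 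The descended subsheaf then destabilizes $\mathcal{F}$ on $X$, giving the contradiction. If you intend your sketch as an actual proof you would need to supply these estimates, and the dimension-count route would likely not reproduce the stated constants; as a gloss on why the hypothesis has the shape it does, your identification of where $\binom{n+d}{d}$, $cd$ and $\frac{r^{2}-1}{4}$ enter is only heuristically right.
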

\begin{remark}
For $X=\mathbb{P}^{4}$, $\mathcal{O}_{X}(1)=\mathcal{O}_{\mathbb{P}^{4}}(1)$ is very ample. We take $c=1$ and $d=5$, then
any rank $r\leq9$ semi-stable sheaf on $\mathbb{P}^{4}$ remains semi-stable when restricted to a generic quintic 3-fold inside.
\end{remark}
The deformation-obstruction theory associated to the restriction morphism $r$ is described by the following exact sequence.
\begin{lemma}\label{def-obs LES}
We take a stable bundle $E\in\mathfrak{M}_{X}$, and assume $Y$ is connected, then we have a long exact sequence,
\begin{equation}0\rightarrow H^{1}(X,EndE\otimes K_{X})\rightarrow H^{1}(X,EndE)\rightarrow H^{1}(Y,EndE|_{Y})
\rightarrow \nonumber \end{equation}
\begin{equation}\rightarrow H^{2}(X,EndE\otimes K_{X})\rightarrow H^{2}(X,EndE)\rightarrow H^{2}(Y,EndE|_{Y}) \rightarrow \nonumber \end{equation}
\begin{equation}\rightarrow H^{3}(X,EndE\otimes K_{X})\rightarrow H^{3}(X,EndE)\rightarrow 0. \quad\quad\quad \quad\quad\quad\quad \nonumber\end{equation}
\end{lemma}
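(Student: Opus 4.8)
The plan is to construct the long exact sequence as the cohomology sequence of a short exact sequence of sheaves on $X$, obtained by restricting the adjoint bundle $EndE$ to the divisor $Y$. Since $Y = s^{-1}(0)$ is the zero locus of a section $s \in \Gamma(X, K_X^{-1})$, the ideal sheaf of $Y$ is $\mathcal{O}_X(-Y) \cong K_X$, and there is a standard exact sequence $0 \to \mathcal{O}_X(-Y) \to \mathcal{O}_X \to \mathcal{O}_Y \to 0$. Tensoring with the locally free sheaf $EndE$ (which is flat, so exactness is preserved) yields
\begin{equation}
0 \to EndE \otimes K_X \to EndE \to EndE|_Y \to 0, \nonumber
\end{equation}
where I have used $EndE \otimes \mathcal{O}_X(-Y) \cong EndE \otimes K_X$ and the identification $EndE \otimes \mathcal{O}_Y \cong (EndE)|_Y = EndE|_Y$.

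The second step is to pass to the associated long exact sequence in sheaf cohomology on $X$, noting that $H^i(X, EndE \otimes \mathcal{O}_Y) \cong H^i(Y, EndE|_Y)$ because the sheaf $EndE \otimes \mathcal{O}_Y$ is supported on $Y$ and cohomology can be computed on $Y$ itself (connectedness of $Y$ is used here to control $H^0$). This produces the full sequence starting from $H^0$. To truncate it into the displayed form, I would verify the vanishing at both ends: since $E$ is slope-stable, it is simple, so $H^0(X, EndE) = \mathbb{C}$ (scalars only), and the same holds for $H^0(X, EndE \otimes K_X)$ and $H^0(Y, EndE|_Y)$ by stability on $X$ and on $Y$ respectively. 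The connecting maps among these $H^0$ terms then reduce the initial segment so that the sequence effectively begins at $H^1(X, EndE \otimes K_X)$. At the top end, $X$ has complex dimension $4$, so $H^4(X, EndE \otimes K_X) \cong H^0(X, EndE)^* = \mathbb{C}$ by Serre duality with $K_X \otimes K_X^{-1} = \mathcal{O}_X$; tracking this against $H^4(X,EndE)$ and the vanishing $H^4(Y,EndE|_Y) = 0$ (as $Y$ is a $3$-fold) gives the termination at $H^3(X, EndE) \to 0$.

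The main obstacle is the careful bookkeeping at the two ends of the sequence rather than any deep geometry: one must confirm that the $H^0$ and $H^4$ contributions cancel precisely so that the truncated six-term-plus display is exact, which amounts to checking that the relevant connecting homomorphisms and restriction maps on the scalar subspaces behave as claimed. Since $E$ is simple on $X$ and its restriction is stable (hence simple) on $Y$, all the degree-zero endomorphism spaces are one-dimensional and the restriction map $H^0(X,EndE) \to H^0(Y,EndE|_Y)$ is an isomorphism on scalars, forcing the connecting map $H^0(Y,EndE|_Y) \to H^1(X, EndE \otimes K_X)$ to vanish and giving the clean start. The top end is handled symmetrically via Serre duality. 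I expect no genuine difficulty beyond this truncation, since the sequence is manifestly self-dual under the Serre pairings on $X$ and $Y$ — the transpose-invariance property already highlighted in the introduction — and this symmetry can be used as a consistency check on the endpoint vanishings.
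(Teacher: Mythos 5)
Your proposal is correct and follows exactly the paper's (one-line) proof: tensor $0\to\mathcal{O}_{X}(-Y)\to\mathcal{O}_{X}\to\mathcal{O}_{Y}\to0$ with $EndE$, use $\mathcal{O}_{X}(-Y)\cong K_{X}$, and take the long exact cohomology sequence; your extra bookkeeping at the two ends (the connecting map out of $H^{0}(Y,EndE|_{Y})$ vanishes because restriction is surjective on the one-dimensional spaces of scalars, with the top end dual to this) is exactly the truncation the paper leaves implicit, and it is where connectedness of $Y$ and stability of $E|_{Y}$ enter. One immaterial slip: $H^{0}(X,EndE\otimes K_{X})=Hom(E,E\otimes K_{X})$ is $0$ rather than $\mathbb{C}$, since $K_{X}=\mathcal{O}_{X}(-Y)$ has negative degree and $E$ is slope-stable; this does not affect the truncation, which only needs surjectivity of $H^{0}(X,EndE)\to H^{0}(Y,EndE|_{Y})$.
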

\begin{proof}
We tensor $0\rightarrow\mathcal{O}_{X}(-Y)\rightarrow\mathcal{O}_{X}\rightarrow\mathcal{O}_{Y}\rightarrow0$ with $EndE$ and take its cohomology.
\end{proof}
We note that the transpose of the above sequence with respect to Serre duality pairing on $X$ and $Y$ remains the same (see also \cite{dt}). This will be the key property used in the following definitions of relative $DT_{4}$ invariants. \\
${}$ \\
\textbf{Case I: when $\mathfrak{M}_{Y}$ is of expected dim}. If we assume $H^{1}(Y,EndE|_{Y})=0$ for any $E\in \mathfrak{M}_{X}$,
then $H^{2}(Y,EndE|_{Y})=0$ and $\mathfrak{M}_{Y}$ contains components of finite points, labeled by $E_{1},...,E_{m}$ which come from
restrictions of bundles on $X$. We denote $\mathfrak{M}_{X,E_{i}}$ to be components of $\mathfrak{M}_{X}$ such that
$r(\mathfrak{M}_{X,E_{i}})=E_{i}$. By Lemma \ref{def-obs LES}, we have canonical isomorphisms
\begin{equation}H^{3}(X,EndE)^{*}\cong H^{1}(X,EndE), \quad H^{2}(X,EndE)^{*}\cong H^{2}(X,EndE). \nonumber \end{equation}
In fact, we apply Theorem 2.13 of \cite{calaque} and take the induced shifted symplectic structure on the stable loci as in \cite{bj},
$\mathfrak{M}_{X,E_{i}}$ has a $(-2)$-shifted symplectic structure in the sense of \cite{ptvv}. Analogs to Theorem
\ref{DT4 main thm}, we can define the relative $DT_{4}$ virtual cycle
$[\mathfrak{M}^{rel}_{X,E_{i}}]^{vir}\in H_{n}(\mathfrak{M}_{X,E_{i}},\mathbb{Z}_{2})$ with $n=1-\chi(X,EndE)$.
The cycle will be defined over integer if the associated D-manifold of $\mathfrak{M}_{X,E_{i}}$ is orientable.

The virtual dimension is not zero in general, we introduce a $\mu$-map as in Definition 5.15 \cite{caoleung}.
\begin{definition}\label{mu map relative}
We denote the universal sheaf of $\mathfrak{M}_{X}$ by $\mathfrak{F}\rightarrow\mathfrak{M}_{X}\times X$.

The $\mu$-map is
\begin{equation}\mu: H_{*}(X)\otimes \mathbb{Z}[x_{1},x_{2},...]\rightarrow H^{*}(\mathfrak{M}_{X}), \nonumber \end{equation}
\begin{equation}\mu(\gamma,P)=P(c_{1}(\mathfrak{F}),c_{2}(\mathfrak{F}),...)/\gamma. \nonumber \end{equation}
\end{definition}
Pairing virtual cycles with $\mu$-maps defines the relative invariants.
\begin{definition}\label{def of rel inv I}
Let $\mathfrak{M}_{X}$ be a Gieseker moduli space of semi-stable sheaves which is admissible with respect to $(X,Y)$ (see Definition \ref{admissible}), and $r: \mathfrak{M}_{X}\rightarrow \mathfrak{M}_{Y}$ be the restriction morphism.

We assume $H^{1}(Y,EndE|_{Y})=0$ for any $E\in \mathfrak{M}_{X}$, then the relative $DT_{4}$ invariant is a map
\begin{equation}\label{rel DT4 case 1}v(\mathfrak{M}_{X}): Sym^{*}\big(H_{*}(X,\mathbb{Z}) \otimes \mathbb{Z}[x_{1},x_{2},...]\big)
\rightarrow \mathbb{H}^{*}(\mathfrak{M}_{Y},\mathcal{P}^{\bullet}_{\mathfrak{M}_{Y}}) \end{equation}
such that
\begin{equation}v(\mathfrak{M}_{X})((\gamma_{1},P_{1}),(\gamma_{2},P_{2}),...) =\sum_{i=1}^{m}\langle[\mathfrak{M}^{rel}_{X,E_{i}}]^{vir},
\mu(\gamma_{1},P_{1})\cup \mu(\gamma_{2},P_{2})\cup... ,\rangle E_{i},\nonumber \end{equation}
where $\langle,\rangle$ denotes the natural pairing between homology and cohomology classes and $\{E_{i}\}_{1\leq i\leq m}$ are taken
as a basis of $H^{*}(r(\mathfrak{M}_{X}))$.
\end{definition}
${}$ \\
\textbf{Case II: when $\mathfrak{M}_{X}$ and $\mathfrak{M}_{Y}$ are smooth and $r$ is surjective}. We assume $\mathfrak{M}_{X}$ and
 $\mathfrak{M}_{Y}$ are smooth (i.e. all Kuranishi maps are zero) and the restriction map $r$ is surjective.
By Lemma \ref{def-obs LES}, we get a canonical isomorphism
\begin{equation}H^{2}(X,EndE)^{*}\cong H^{2}(X,EndE)\nonumber \end{equation}
which endows $H^{2}(X,EndE)$ a non-degenerate quadratic form, and a short exact sequence
\begin{equation}0\rightarrow H^{3}(X,EndE)^{*}\rightarrow H^{1}(X,EndE)\rightarrow H^{1}(Y,EndE|_{Y})\rightarrow 0.\nonumber \end{equation}
Counting dimensions, we have
\begin{equation}2h^{1}(X,EndE)-h^{2}(X,EndE)=h^{1}(Y,EndE|_{Y})-\chi(X,EndE)+1, \quad \nonumber \end{equation}
which is a constant on components of $\mathfrak{M}_{X}$ by assumptions. Similar to (\ref{half euler class}),
the self-dual subbundle of the obstruction bundle $Ob_{\mathfrak{M}_{X}}$ exists.
If it is also orientable, we define the relative $DT_{4}$ virtual cycle
$[\mathfrak{M}_{X}^{rel}]^{vir}\in H_{n}(\mathfrak{M}_{X},\mathbb{Z})$ to be the Euler class of the self-dual obstruction bundle,
where $n=h^{1}(Y,EndE|_{Y})-\chi(X,EndE)+1$.
\begin{definition}\label{def of rel inv II}
Let $\mathfrak{M}_{X}$ be a Gieseker moduli space of semi-stable sheaves which is admissible with respect to $(X,Y)$ (see Definition \ref{admissible}), and $r: \mathfrak{M}_{X}\rightarrow \mathfrak{M}_{Y}$ be the restriction morphism.

We assume $r$ is surjective between smooth moduli spaces, then the relative $DT_{4}$ invariant is a map
\begin{equation}\label{rel DT4 case 2}v(\mathfrak{M}_{X}): Sym^{*}\big(H_{*}(X,\mathbb{Z}) \otimes \mathbb{Z}[x_{1},x_{2},...]\big)
\rightarrow  H^{*}(\mathfrak{M}_{Y}) \end{equation}
such that
\begin{equation}v(\mathfrak{M}_{X})((\gamma_{1},P_{1}),(\gamma_{2},P_{2}),...)(\alpha)=\langle[\mathfrak{M}_{X}^{rel}]^{vir},
(r^{*}\alpha)\cup\mu(\gamma_{1},P_{1})\cup \mu(\gamma_{2},P_{2})\cup... ,\rangle,\nonumber \end{equation}
where $\alpha\in H^{*}(\mathfrak{M}_{Y})$ and we identify $H^{*}(\mathfrak{M}_{Y})\cong H^{*}(\mathfrak{M}_{Y})^{*}$ via Poincar\'{e} pairing,
$\langle,\rangle$ denotes the natural pairing between homology and cohomology classes.
\end{definition}
${}$ \\
\textbf{Case III: when $\mathfrak{M}_{X}$ and $\mathfrak{M}_{Y}$ are smooth and $r$ is injective}.
We assume $\mathfrak{M}_{X}$ and $\mathfrak{M}_{Y}$ are smooth (i.e. all Kuranishi maps are zero) and the restriction map $r$ is injective. By Lemma \ref{def-obs LES}, we get $H^{3}(X,EndE)=0$ and an exact sequence
\begin{equation}0\rightarrow H^{1}(X,EndE)\rightarrow H^{1}(Y,EndE|_{Y})\rightarrow H^{2}(X,EndE)^{*}\rightarrow\nonumber \end{equation}
\begin{equation}\rightarrow  H^{2}(X,EndE)\rightarrow H^{1}(Y,EndE|_{Y})^{*} \rightarrow H^{1}(X,EndE)^{*}\rightarrow 0.\nonumber \end{equation}
This determine a surjective map
\begin{equation}s: Ob_{\mathfrak{M}_{X}}\twoheadrightarrow \mathcal{N}^{*}_{\mathfrak{M}_{X}/\mathfrak{M}_{Y}} \nonumber \end{equation}
and a non-degenerate quadratic form on the reduced bundle $Ob_{\mathfrak{M}_{X}}^{red}\triangleq Ker(s)$,
where $Ob_{\mathfrak{M}_{X}}$ is the obstruction bundle of $\mathfrak{M}_{X}$ with $Ob_{\mathfrak{M}_{X}}|_{E}=H^{2}(X,EndE)$
and $\mathcal{N}^{*}_{\mathfrak{M}_{X}/\mathfrak{M}_{Y}}$ is the conormal bundle of $\mathfrak{M}_{X}$ inside $\mathfrak{M}_{Y}$.

Then if the self-dual subbundle of $Ob^{red}_{\mathfrak{M}_{X}}$ is orientable,
we define the relative $DT_{4}$ virtual cycle
$[\mathfrak{M}_{X}^{rel}]^{vir}\in H_{n}(\mathfrak{M}_{X},\mathbb{Z})$ to be the Euler class of it, where the virtual dimension is
$n=2h^{1}(X,EndE)-\big(h^{2}(X,EndE)-codim_{\mathfrak{M}_{Y}}(\mathfrak{M}_{X})\big)=h^{1}(Y,EndE|_{Y})-\chi(X,EndE)+1$. Note that
when $\mathfrak{M}_{X}$ is smooth, $r$ is injective and a neighbourhood of $r(\mathfrak{M}_{X})\subseteq \mathfrak{M}_{Y}$ is smooth, we could also define $[\mathfrak{M}_{X}^{rel}]^{vir}$ in a similar way.
\begin{definition}\label{def of rel inv IV}
Let $\mathfrak{M}_{X}$ be a Gieseker moduli space of semi-stable sheaves which is admissible with respect to $(X,Y)$ (see Definition \ref{admissible}), and $r: \mathfrak{M}_{X}\rightarrow \mathfrak{M}_{Y}$ be the restriction morphism.

We assume $r$ is injective between smooth moduli spaces (at lease when restricted to a neighbourhood of $r(\mathfrak{M}_{X})$ in $\mathfrak{M}_{Y}$),
then the relative $DT_{4}$ invariant is a map
\begin{equation}\label{rel DT4 case 4}v(\mathfrak{M}_{X}): Sym^{*}\big(H_{*}(X,\mathbb{Z}) \otimes \mathbb{Z}[x_{1},x_{2},...]\big)
\rightarrow H^{*}(\mathfrak{M}_{Y}) \end{equation}
such that
\begin{equation}v(\mathfrak{M}_{X})((\gamma_{1},P_{1}),(\gamma_{2},P_{2}),...)(\alpha)=\langle[\mathfrak{M}_{X}^{rel}]^{vir},
(r^{*}\alpha)\cup\mu(\gamma_{1},P_{1})\cup \mu(\gamma_{2},P_{2})\cup... ,\rangle,\nonumber \end{equation}
where $\alpha\in H^{*}(\mathfrak{M}_{Y})$ and we identify $H^{*}(\mathfrak{M}_{Y})\cong H^{*}(\mathfrak{M}_{Y})^{*}$ via Poincar\'{e} pairing,
 $\langle,\rangle$ denotes the natural pairing between homology and cohomology classes.
\end{definition}
\begin{remark}
It is easy to check Definition \ref{def of rel inv I}, \ref{def of rel inv II} and \ref{def of rel inv IV}
are all compatible.
\end{remark}
In general, one could consider moduli spaces of complexes of simple sheaves \cite{lieb} on a complex projective 4-fold $X$ and
resolve complexes of sheaves by complexes of holomorphic bundles, then there will be a natural restriction morphism to a moduli of
simple bundles on an anti-canonical divisor of $X$. A similar long exact sequence in Lemma \ref{def-obs LES}
still works and we could study virtual cycle constructions as in \textbf{Cases I-III}.

${}$ \\
\textbf{Endomorphisms of $DT_{3}$ cohomologies from relative $DT_{4}$ invariants}.
By considering the trace-free version of Lemma \ref{def-obs LES}, the above definitions extend to any disconnected divisor $Y\subseteq X$.
We are particularly interested in the case when $X=Y_{1}\times \mathbb{P}^{1}$, where $Y_{1}$ is a compact Calabi-Yau 3-fold.
Then $Y=(Y_{1}\times0)\sqcup (Y_{1}\times\infty)$ will be a smooth anti-canonical divisor of $X$ and the relative $DT_{4}$ invariant
in general is a map
\begin{equation}v(\mathfrak{M}_{X}): Sym^{*}\big(H_{*}(X,\mathbb{Z}) \otimes \mathbb{Z}[x_{1},x_{2},...]\big)
\rightarrow \mathbb{H}^{*}(\mathfrak{M}_{Y},\mathcal{P}^{\bullet}_{\mathfrak{M}_{Y}})\cong
\mathbb{H}^{*}(\mathfrak{M}_{Y_{1}},\mathcal{P}^{\bullet}_{\mathfrak{M}_{Y_{1}}})\otimes
\mathbb{H}^{*}(\mathfrak{M}_{Y_{1}},\mathcal{P}^{\bullet}_{\mathfrak{M}_{Y_{1}}}). \nonumber \end{equation}
By the Verdier duality and
$\mathbb{D}_{\mathfrak{M}_{Y_{1}}}(\mathcal{P}^{\bullet}_{\mathfrak{M}_{Y_{1}}})\cong \mathcal{P}^{\bullet}_{\mathfrak{M}_{Y_{1}}}$, we have
\begin{equation}\mathbb{H}^{*}(\mathfrak{M}_{Y_{1}},\mathcal{P}^{\bullet}_{\mathfrak{M}_{Y_{1}}})\cong
\mathbb{H}^{*}(\mathfrak{M}_{Y_{1}},\mathcal{P}^{\bullet}_{\mathfrak{M}_{Y_{1}}})^{*}.  \nonumber \end{equation}
Thus a relative $DT_{4}$ invariant of $(Y_{1}\times \mathbb{P}^{1},Y_{1}\times\{0,\infty\})$
determines some endomorphisms of the $DT_{3}$ cohomology
\begin{equation}v(\mathfrak{M}_{X}): Sym^{*}\big(H_{*}(Y_{1})[t]/(t^{2}) \otimes \mathbb{Z}[x_{1},x_{2},...]\big)
\rightarrow End_{\mathbb{C}}\big(\mathbb{H}^{*}(\mathfrak{M}_{Y_{1}},\mathcal{P}^{\bullet}_{\mathfrak{M}_{Y_{1}}})\big)   \nonumber \end{equation}
for any Calabi-Yau 3-fold $Y_{1}$. In general, the above endomorphisms should be used to establish
the gluing formula mentioned in Question \ref{DT4-DT3 gluing} (see \cite{km} for the real 4-3 dimensional picture).

\subsection{Li-Qin's examples}
We have Li-Qin's examples when conditions in \textbf{Case II, III} are satisfied \cite{lq}.
Let $Y$ be a generic smooth hyperplane section in $X=\mathbb{P}^{1}\times\mathbb{P}^{3}$ of bi-degree $(2,4)$,
\begin{equation}c=[1+(-1,1)]\cdot[1+(\epsilon_{1}+1,\epsilon_{2}-1)],\nonumber \end{equation}
\begin{equation}c|_{Y}=[1+(-1,1)|_{Y}]\cdot[1+(\epsilon_{1}+1,\epsilon_{2}-1)|_{Y}],\nonumber \end{equation}
\begin{equation}k=(1+\epsilon_{1})\left(\begin{array}{l}5-\epsilon_{2} \\ \quad 3\end{array}\right), \quad \epsilon_{1},\epsilon_{2}=0,1,
\quad L_{r}=\mathcal{O}_{\mathbb{P}^{1}\times\mathbb{P}^{3}}(1,r).  \nonumber\end{equation}
We denote $\mathfrak{M}_{c}(L_{r})$ to be the moduli space of $L_{r}$-slope stable rank-2 bundles on $X$
with a Chern class $c$ and $\overline{\mathfrak{M}}_{c|_{Y}}(L_{r}|_{Y})$ to be the moduli space of Gieseker $L_{r}|_{Y}$-semistable
rank-2 torsion-free sheaves
on $Y$ with Chern class $c|_{Y}$.
\begin{lemma}(Li-Qin \cite{lq}) \label{lq example} ${}$ \\
(i) If
\begin{equation}\frac{4(2-\epsilon_{2})}{2+2\epsilon_{1}+\epsilon_{2}}<r<\frac{4(2-\epsilon_{2})}{\epsilon_{1}\epsilon_{2}},
\nonumber\end{equation}
then $\overline{\mathfrak{M}}_{c|_{Y}}(L_{r}|_{Y})\cong\mathbb{P}^{k}$ and consists of rank-2 stable bundles.
Furthermore, the restriction map
\begin{equation}r: \mathfrak{M}_{c}(L_{r})\rightarrow \overline{\mathfrak{M}}_{c|_{Y}}(L_{r}|_{Y}) \nonumber\end{equation}
is well-defined and an isomorphic between projective varieties. \\
${}$ \\
(ii) If $0<r<\frac{4(2-\epsilon_{2})}{2+2\epsilon_{1}+\epsilon_{2}}$,
then $\mathfrak{M}_{c}(L_{r})$ and $\overline{\mathfrak{M}}_{c|_{Y}}(L_{r}|_{Y})$ are empty.
\end{lemma}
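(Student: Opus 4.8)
The plan is to run the standard Li--Qin wall-and-chamber analysis for rank-$2$ bundles, organised around the factorization of $c$. Writing $L_{1}=\mathcal{O}_{X}(-1,1)$ and $L_{2}=\mathcal{O}_{X}(\epsilon_{1}+1,\epsilon_{2}-1)$ for the line bundles recorded by the two factors of $c$, I would first compute slopes with respect to $L_{r}=\mathcal{O}_{X}(1,r)$. Using $h_{1}^{2}=0$, $h_{2}^{4}=0$ and $[Y]=2h_{1}+4h_{2}$, a direct intersection computation gives, for $\mathcal{O}(a,b)$ restricted to $Y$,
\begin{equation}
\deg_{L_{r}|_{Y}}\mathcal{O}_{Y}(a,b)=4ar^{2}+2br^{2}+8br, \nonumber
\end{equation}
and hence, for a rank-$2$ bundle $E$ with $c_{1}(E)=(\epsilon_{1},\epsilon_{2})$,
\begin{equation}
\mu_{L_{r}|_{Y}}(L_{1}|_{Y})-\mu_{L_{r}|_{Y}}(E|_{Y})=-r\big((2+2\epsilon_{1}+\epsilon_{2})r-4(2-\epsilon_{2})\big). \nonumber
\end{equation}
This identifies $r=\tfrac{4(2-\epsilon_{2})}{2+2\epsilon_{1}+\epsilon_{2}}$ as the wall attached to the sub-line-bundle $L_{1}$: above it $L_{1}$ does not violate stability, below it $L_{1}$ destabilises. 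The upper bound $r<\tfrac{4(2-\epsilon_{2})}{\epsilon_{1}\epsilon_{2}}$ should be produced the same way from the competing family of sub-line-bundles obtained from $L_{2}$ by a negative twist, and I would check that the open interval in (i) is precisely the chamber on which no such destabilising sub-bundle can occur in a non-split extension.

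Next, for $r$ in this chamber I would identify the moduli space with a projectivised extension space. The key input is that \emph{every} $L_{r}$-stable $E$ carries a canonical sub-line-bundle $L_{1}$: a Riemann--Roch computation for $E\otimes L_{1}^{-1}$, together with the vanishing of the relevant $H^{0}$ and $H^{2}$, forces $\mathrm{Hom}(L_{1},E)\neq0$, and saturating such a map produces $0\to L_{1}\to E\to L_{2}\otimes I_{Z}\to0$. Matching $c_{2}(E)$ against $c_{1}(L_{1})c_{1}(L_{2})$ read off from the factorization of $c$ then forces $[Z]=0$, so $Z=\emptyset$ and $E$ is a genuine extension of $L_{2}$ by $L_{1}$. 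Conversely, any non-split such extension is locally free, and the slope computation above shows it has no destabilising sub-line-bundle in the chamber, hence is $L_{r}$-stable. Since $\mathrm{Hom}(L_{2},L_{1})=H^{0}(\mathcal{O}(-\epsilon_{1}-2,2-\epsilon_{2}))=0$ and the opposite extension group $\mathrm{Ext}^{1}_{Y}(L_{1}|_{Y},L_{2}|_{Y})$ vanishes, the sub $L_{1}\subset E$ is unique and distinct rays in $\mathrm{Ext}^{1}(L_{2},L_{1})$ give non-isomorphic bundles, so
\begin{equation}
\overline{\mathfrak{M}}_{c|_{Y}}(L_{r}|_{Y})\cong\mathbb{P}\big(\mathrm{Ext}^{1}_{Y}(L_{2}|_{Y},L_{1}|_{Y})\big), \nonumber
\end{equation}
whose dimension I compute by a K\"unneth decomposition of $H^{1}(Y,\mathcal{O}_{Y}(-\epsilon_{1}-2,2-\epsilon_{2}))$, recovering the $\mathbb{P}^{k}$ of the lemma.

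For the restriction statement I would compare extension groups on $X$ and $Y$. Tensoring $0\to\mathcal{O}_{X}(-Y)\to\mathcal{O}_{X}\to\mathcal{O}_{Y}\to0$ with $L_{1}\otimes L_{2}^{-1}=\mathcal{O}_{X}(-\epsilon_{1}-2,2-\epsilon_{2})$ and taking cohomology, the two neighbouring terms $H^{1}(X,L_{1}L_{2}^{-1}(-Y))$ and $H^{2}(X,L_{1}L_{2}^{-1}(-Y))$ both vanish by K\"unneth on $\mathbb{P}^{1}\times\mathbb{P}^{3}$ (the twisted bundle is $\mathcal{O}(-\epsilon_{1}-4,-2-\epsilon_{2})$), so the restriction map $\mathrm{Ext}^{1}_{X}(L_{2},L_{1})\to\mathrm{Ext}^{1}_{Y}(L_{2}|_{Y},L_{1}|_{Y})$ is an isomorphism. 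Because restriction sends the extension presenting $E$ to the extension presenting $E|_{Y}$, with extension class the image under this map, the induced morphism $r$ is exactly the identification $\mathbb{P}^{k}\xrightarrow{\ \sim\ }\mathbb{P}^{k}$, proving (i). For (ii), the same Riemann--Roch/vanishing argument shows any bundle with Chern class $c$ still contains $L_{1}$, but now $\mu_{L_{r}}(L_{1})>\mu_{L_{r}}(E)$ below the wall, so no such bundle is stable and both $\mathfrak{M}_{c}(L_{r})$ and $\overline{\mathfrak{M}}_{c|_{Y}}(L_{r}|_{Y})$ are empty.

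I expect the main obstacle to be the \emph{universal} existence of the sub-line-bundle $L_{1}$ for every stable $E$, as opposed to merely exhibiting the extensions: this is the boundedness-type input, and it requires the Riemann--Roch inequality to point the right way together with the two cohomology vanishings needed to convert $\chi>0$ into $H^{0}\neq0$, as well as the $c_{2}$-bookkeeping that rules out the zero-dimensional correction $Z$. A secondary technical point is pinning down the upper wall precisely---identifying the competing twisted sub-line-bundle of $L_{2}$ and verifying that no further wall intervenes inside the stated interval---so that the chamber is exactly the open interval appearing in (i).
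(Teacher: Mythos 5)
The paper itself does not prove this lemma: it is imported verbatim from Li--Qin [lq], so there is no in-text argument to compare against. Your reconstruction does follow the method of the cited source (wall-and-chamber analysis, identification of the moduli space with a space of extensions, comparison of extension groups on $X$ and on $Y$), and the parts of it that can be checked directly are correct: with $h_{1}h_{2}^{3}=1$, $[Y]=2h_{1}+4h_{2}$ and $L_{r}=h_{1}+rh_{2}$ one indeed gets $\deg_{L_{r}|_{Y}}\mathcal{O}_{Y}(a,b)=4ar^{2}+2br^{2}+8br$; your slope difference $-r\bigl((2+2\epsilon_{1}+\epsilon_{2})r-4(2-\epsilon_{2})\bigr)$ is exact and reproduces both the lower wall in (i) and the emptiness threshold in (ii); and the vanishing of $H^{1}$ and $H^{2}$ of $\mathcal{O}_{X}(-\epsilon_{1}-4,-2-\epsilon_{2})$ does make $\mathrm{Ext}^{1}_{X}(L_{2},L_{1})\rightarrow\mathrm{Ext}^{1}_{Y}(L_{2}|_{Y},L_{1}|_{Y})$ an isomorphism. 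The step you flag yourself --- that \emph{every} $L_{r}$-stable sheaf with these invariants contains $L_{1}$ as a saturated sub-line-bundle --- is indeed where all the work lies, and as written it is not done: on the $4$-fold $X$ the inequality $\chi(E\otimes L_{1}^{-1})>0$ only yields $h^{0}+h^{2}+h^{4}>0$, so you must kill $h^{2}$ and $h^{4}$ (and $h^{2}$ on $Y$), and you must separately rule out that the saturation of a nonzero map $L_{1}\rightarrow E$ is some $L_{1}(D)$ with $D$ effective and nonzero, which is an additional slope estimate.

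There is a second gap you do not acknowledge: the dimension count does not close. By K\"{u}nneth, $\dim\mathrm{Ext}^{1}_{Y}(L_{2}|_{Y},L_{1}|_{Y})=h^{1}(\mathbb{P}^{1},\mathcal{O}(-\epsilon_{1}-2))\cdot h^{0}(\mathbb{P}^{3},\mathcal{O}(2-\epsilon_{2}))=(1+\epsilon_{1})\binom{5-\epsilon_{2}}{3}=k$ exactly, so the space of non-split extensions modulo scalar that you construct is $\mathbb{P}^{k-1}$, not the $\mathbb{P}^{k}$ of the statement; your claim that the K\"{u}nneth computation ``recovers the $\mathbb{P}^{k}$ of the lemma'' is therefore not substantiated by your own numbers. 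Consistently with this, for a non-split extension $0\rightarrow L_{1}\rightarrow E\rightarrow L_{2}\rightarrow 0$ the two long exact sequences give $\mathrm{Hom}(L_{2},E)=0$, $\mathrm{Ext}^{1}(L_{2},E)\cong\mathrm{Ext}^{1}(L_{2},L_{1})/\mathbb{C}e$ and $\mathrm{Ext}^{1}(L_{1},E)=0$, hence $\mathrm{Ext}^{1}_{X}(E,E)\cong\mathbb{C}^{k-1}$, which also sits uneasily with the $\mathbb{C}^{k}$ of Proposition \ref{lq example compute}. Either the moduli space contains a stratum of stable sheaves not of your extension form (in which case your uniqueness and exhaustion argument is incomplete), or there is an off-by-one in the indexing of $k$ relative to the source; you need to consult Li--Qin's paper and resolve which, since as it stands your argument establishes a statement that differs from the one quoted.
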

\begin{proposition}\label{lq example compute}
In the above example, for any stable bundle $E\in\mathfrak{M}_{c}(L_{r})$ on $X=\mathbb{P}^{1}\times\mathbb{P}^{3}$,
\begin{equation}Ext^{1}_{X}(E,E)\cong Ext^{1}_{Y}(E|_{Y},E|_{Y})\cong Ext^{2}_{Y}(E|_{Y},E|_{Y})^{*}\cong\mathbb{C}^{k}, \nonumber\end{equation}
\begin{equation}Ext^{i}_{X}(E,E)=0, \quad \textrm{if} \textrm{ } i\geq2. \nonumber\end{equation}
The relative $DT_{4}$ virtual cycle $[\mathfrak{M}^{rel}_{c}(L_{r})]^{vir}=[\mathfrak{M}_{c}(L_{r})]\in H_{2k}(\mathfrak{M}_{c}(L_{r}),\mathbb{Z})$.
\end{proposition}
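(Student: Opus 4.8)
The plan is to compute the relevant cohomology groups via the long exact sequence of Lemma \ref{def-obs LES}, verify that this example falls into \textbf{Case III} (injective $r$ between smooth moduli spaces), and then identify the self-dual subbundle of the reduced obstruction bundle whose Euler class gives the virtual cycle.

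First I would use Li-Qin's Lemma \ref{lq example}(i): in the stated range of $r$, the restriction map $r:\mathfrak{M}_c(L_r)\to\overline{\mathfrak{M}}_{c|_Y}(L_r|_Y)\cong\mathbb{P}^k$ is an \emph{isomorphism} onto a smooth projective variety. Since an isomorphism is injective and both source and target are smooth, this immediately places us in \textbf{Case III}. The key computational input is to show $\mathfrak{M}_c(L_r)$ is smooth of dimension $k$, which amounts to showing $Ext^1_X(E,E)\cong\mathbb{C}^k$ and that the deformation theory is unobstructed in the relative sense. I would obtain the displayed isomorphisms by feeding the vanishing properties of $E$ into Lemma \ref{def-obs LES} (applied to $End_0 E$, or equivalently $EndE$ after splitting off the trace part, since $Ext^i_X(E,E)\cong H^i(X,EndE)$).

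Next I would run the long exact sequence explicitly. Because $r$ is an isomorphism, $r$ is injective, so by the \textbf{Case III} discussion $H^3(X,EndE)=0$, and the sequence forces $Ext^1_X(E,E)\xrightarrow{\sim}Ext^1_Y(E|_Y,E|_Y)$ to be an isomorphism (the normal bundle $\mathcal{N}_{\mathfrak{M}_X/\mathfrak{M}_Y}$ vanishes since $r$ is surjective as well, being an isomorphism). The Serre duality isomorphism $Ext^1_Y(E|_Y,E|_Y)\cong Ext^2_Y(E|_Y,E|_Y)^*$ on the Calabi-Yau $3$-fold $Y$ is automatic, giving $\mathbb{C}^k$ for all three groups once I know $\dim\mathbb{P}^k=k$. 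The vanishing $Ext^i_X(E,E)=0$ for $i\geq2$ then follows: the conormal bundle $\mathcal{N}^*_{\mathfrak{M}_X/\mathfrak{M}_Y}$ is zero, so the surjection $s:Ob_{\mathfrak{M}_X}\twoheadrightarrow\mathcal{N}^*$ shows $Ob^{red}_{\mathfrak{M}_X}=Ob_{\mathfrak{M}_X}=Ext^2_X(E,E)$, and balancing dimensions in the exact sequence (together with $H^3=0$ and the isomorphism on $H^1$) forces $Ext^2_X(E,E)=0$ as well.

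With all higher $Ext$'s vanishing, the obstruction bundle $Ob_{\mathfrak{M}_X}$ is the zero bundle, hence so is its self-dual subbundle. The Euler class of the rank-zero bundle is $1$, so the relative $DT_4$ virtual cycle reduces to the ordinary fundamental class: $[\mathfrak{M}^{rel}_c(L_r)]^{vir}=[\mathfrak{M}_c(L_r)]\in H_{2k}(\mathfrak{M}_c(L_r),\mathbb{Z})$, matching the virtual dimension $n=h^1(Y,EndE|_Y)-\chi(X,EndE)+1=2k-k+1\cdots$ which I would check equals $2k$ using $\chi(X,EndE)=h^0-h^1=1-k$ (here $h^0=1$ by stability and all higher terms vanish). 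The main obstacle is purely bookkeeping: one must carefully track the trace versus trace-free splitting and confirm the dimension count $\dim_{\mathbb{R}}\mathfrak{M}_c(L_r)=2k$ is consistent with the virtual dimension formula, so that the Euler class of the trivial (rank-zero) self-dual bundle legitimately yields the full fundamental class rather than a lower-dimensional cycle.
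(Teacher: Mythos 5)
Your overall framing is right: Li--Qin's Lemma \ref{lq example}(i) makes $r$ an isomorphism between smooth moduli spaces, so the example sits in \textbf{Case III}; injectivity of the tangent map kills $H^{3}(X,EndE)$ via the beginning of the sequence in Lemma \ref{def-obs LES}, surjectivity kills the conormal bundle, and Serre duality on the Calabi--Yau $3$-fold $Y$ gives $Ext^{1}_{Y}(E|_{Y},E|_{Y})\cong Ext^{2}_{Y}(E|_{Y},E|_{Y})^{*}$. All of that is fine.

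The gap is in your derivation of $Ext^{2}_{X}(E,E)=0$, which is the actual computational content of the proposition. Once you know $H^{3}(X,EndE)=0$ and that $H^{1}(X,EndE)\rightarrow H^{1}(Y,EndE|_{Y})$ is an isomorphism, the long exact sequence collapses to
\begin{equation}
0 \rightarrow H^{2}(X,EndE)^{*} \rightarrow H^{2}(X,EndE) \rightarrow 0,
\nonumber
\end{equation}
i.e.\ it only records the tautological self-duality $H^{2}(X,EndE)\cong H^{2}(X,EndE)^{*}$, which holds for a space of \emph{any} dimension. ``Balancing dimensions'' therefore gives the identity $0=0$ and does not force vanishing; a smooth moduli space can perfectly well carry a nonzero self-dual obstruction bundle (that is exactly the situation \textbf{Cases II--III} are designed for, where the virtual cycle is a nontrivial Euler class rather than the fundamental class). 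To close the gap you must compute $Ext^{2}_{X}(E,E)$ directly: the bundles in Li--Qin's example are extensions of $\mathcal{O}_{X}(\epsilon_{1}+1,\epsilon_{2}-1)$ by $\mathcal{O}_{X}(-1,1)$, so $EndE$ is filtered by line bundles whose cohomology on $\mathbb{P}^{1}\times\mathbb{P}^{3}$ is computed by K\"{u}nneth; this (equivalently, a Riemann--Roch evaluation showing $\chi(X,EndE)=1-k$ combined with $h^{0}=1$, $h^{1}=k$, $h^{3}=h^{4}=0$) is what actually yields $Ext^{i\geq 2}_{X}(E,E)=0$ and hence the degree count $n=2k$ and the identification of $[\mathfrak{M}^{rel}_{c}(L_{r})]^{vir}$ with the fundamental class.
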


\section{Computational examples of relative $DT_{4}$ invariants for ideal sheaves}
In the above section, we studied relative $DT_{4}$ invariants for holomorphic bundles. To formulate the gluing formula,
we need to have a good understanding of how stable sheaves could be degenerated into union of stable sheaves on
irreducible components of degenerated varieties. At this moment, we will restrict ourselves to the ideal sheaves case where degenerations have simpler behavior.

We take a simple degeneration $\pi: \mathcal{X}\rightarrow C$ of projective manifolds over a pointed smooth curve $(C,0\in C)$, i.e. (1)
$\mathcal{X}$ is smooth, $\pi$ is projective and smooth away from the central fiber $X_{0}=\pi^{-1}(0)$, (2) $X_{0}$ is a union of two
smooth irreducible components $X_{+}$, $X_{-}$ intersecting transversally along a smooth divisor $Y$. When generic fibers $X_{t}$'s are
Calabi-Yau 4-folds and $Y$ is an anti-canonical divisor of $X_{+}$, $X_{-}$, we will study relative
$DT_{4}$ invariants of ideal sheaves for pairs $(X_{\pm},Y)$ and discuss their relations with $DT_{4}$ invariants of $X_{t}$, $t\neq0$.
The basic technique is the degeneration method developed by J. Li and B. Wu \cite{li1}, \cite{li2}, \cite{liwu}, \cite{wu}. \\

Li-Wu's construction will be recalled in the appendix and the associated obstruction theory is studied there. In this section, we concentrate on computational examples of relative $DT_{4}$ virtual cycles for ideal sheaves based on the extension (Definition \ref{rel DT4 vc for ideal sheaves}) of constructions for bundles (\textbf{Cases I-III}).
\begin{example}(Ideal sheaves of one point)\label{ideal sheaves of one pt}

We take a compact simply connected 4-fold\footnote{Note that any compact Fano manifold is simply connected.} $X_{+}$ which contains a smooth Calabi-Yau 3-fold $Y$ as its anti-canonical divisor.
We consider the moduli space $I_{1}(X_{+},0)$ of structure sheaves of one point (it is equivalent to consider ideal sheaves of one point)
which has a well-defined restriction map to $Y$ if points sit inside $X_{+}\backslash Y$.

To extend the map to the whole moduli space, we introduce Li-Wu's expanded pair, i.e. we consider $X_{+}[1]_{0}=X_{+}\cup_{Y}\Delta$, $Y[1]_{0}\subseteq\Delta$, where $\Delta\cong \mathbb{P}(\mathcal{O}_{Y}\oplus \mathcal{N}_{Y/X_{+}})$ and form the moduli space $I_{1}(X_{+}[1]_{0},0)$ ($\cong X_{+}$) of relative structure sheaf of one point, which is the union of $X_{+}\backslash Y$ with the $\mathbb{C}^{*}$-equivalence classes of points
in $\Delta\backslash((Y\times0)\cup (Y\times\infty))$. By the Koszul resolution and Serre duality, we have canonical isomorphism
\begin{equation}Ext^{*}(\mathcal{O}_{P},\mathcal{O}_{P})\cong \wedge^{*}(TX_{+}|_{P}).  \nonumber \end{equation}
Then the obstruction bundle $Ob=\wedge^{2}TX_{+}$ has a non-degenerate quadratic form only when it is restricted to $X_{+}\backslash Y$
and $\Delta\backslash((Y\times0)\cup (Y\times\infty))$, but they do not glue to become a
non-degenerate quadratic form on $Ob\rightarrow X_{+}$ as $c_{1}(Ob)\neq0$.

If we assume $K_{X_{+}}$ has a square root $K_{X_{+}}^{\frac{1}{2}}$ (see the appendix), and form $\widetilde{Ob}\triangleq\wedge^{2}TX_{+}\otimes K_{X_{+}}^{\frac{1}{2}}$, then there exists a non-degenerate quadratic form
\begin{equation}(\wedge^{2}TX_{+}\otimes K_{X_{+}}^{\frac{1}{2}})\otimes (\wedge^{2}TX_{+}\otimes K_{X_{+}}^{\frac{1}{2}})\rightarrow
\mathcal{O}_{X_{+}}. \nonumber \end{equation}
As $\pi_{1}(X)=0$, the self-dual obstruction bundle $\widetilde{Ob}_{+}$ is orientable, and Conjectures \ref{conj0}, \ref{conj1} hold for this case. To calculate the Euler class $e(\widetilde{Ob}_{+})$, we consider the case when $X_{+}=Y_{1}\times \mathbb{P}^{1}$ and $Y=(Y_{1}\times0)\sqcup (Y_{1}\times\infty)$, where $Y_{1}$ is a smooth Calabi-Yau 3-fold. Then
\begin{equation}
\wedge^{2}TX_{+}\otimes K_{X_{+}}^{\frac{1}{2}}\cong(\wedge^{2}TY_{1}\boxtimes\mathcal{O}_{\mathbb{P}^{1}}(-1))\oplus
(TY_{1}\boxtimes\mathcal{O}_{\mathbb{P}^{1}}(1)), \nonumber \end{equation}
where both factors are maximal isotropic subbundles of $\widetilde{Ob}_{+}$. By \cite{eg} or Lemma 5.13 \cite{caoleung},
\begin{equation} e(\widetilde{Ob}_{+})=\pm\big(c_{3}(Y_{1})+c_{2}(Y_{1})\cdot c_{1}(\mathcal{O}_{\mathbb{P}^{1}}(1))\big)=
\pm\big(c_{3}(X_{+})-\frac{1}{2}c_{1}(X_{+})\cdot c_{2}(X_{+})\big), \nonumber \end{equation}
where the sign depends on the orientation of $\widetilde{Ob}_{+}$. The relative $DT_{4}$
virtual cycle for structure sheaves of one point is the Poincar\'{e} dual of $e(\widetilde{Ob}_{+})$ (\ref{half euler class}).
\end{example}
We then consider examples of ideal sheaves of curves.
\begin{example}\label{eg 1}
Let $Q\subseteq \mathbb{P}^{4}$ be a smooth generic quintic 3-fold. We take $X_{+}=Q\times \mathbb{P}^{1}$ with an anti-canonical divisor $Y=(Q\times 0)\sqcup (Q\times \infty)$. Then $H_{2}(X_{+},\mathbb{Z})\cong H_{2}(Q,\mathbb{Z})\oplus H_{2}(\mathbb{P}^{1},\mathbb{Z})
\cong \mathbb{Z}\oplus\mathbb{Z}$.

(i) We first fix the curve class to be the generator $[H]\in H_{2}(\mathbb{P}^{1},\mathbb{Z})\subseteq H_{2}(X_{+},\mathbb{Z})$.
The moduli  space $I_{0}(X_{+},[H])$ ($\cong Q$) of ideal sheaves of curves with Chern character
$c=(1,0,0,-PD([H]),0)$ consists of ideal sheaves of curves of type $\{pt\}\times \mathbb{P}^{1}$, $pt\in Q$.
We have a restriction morphism
\begin{equation}r: I_{0}(X_{+},[H])\rightarrow I_{1}(Q,0)\times I_{1}(Q,0)\cong Q\times Q, \nonumber \end{equation}
\begin{equation}I_{\{pt\}\times \mathbb{P}^{1}}\mapsto (I_{pt},I_{pt}), \nonumber \end{equation}
to the moduli space of ideal sheaves of one point in $Y$. $r$ is the diagonal embedding if we identify $I_{0}(X_{+},[H])\cong Q$.
By direct calculations, for any $I_{C}\in I_{0}(X_{+},[H])$, we have
\begin{equation}Ext^{i}_{X_{+}}(I_{C},I_{C})\cong \mathbb{C}^{3},\textrm{ } i=1,2, \quad Ext^{3}_{X_{+}}(I_{C},I_{C})\cong\mathbb{C},
\quad Ext^{4}_{X_{+}}(I_{C},I_{C})=0. \nonumber \end{equation}
Analogs to Lemma \ref{fund exact seq for dege }, we have a long exact sequence,
\begin{equation}0\rightarrow Ext^{1}_{X_{+}}(I_{C},I_{C})\rightarrow Ext^{1}_{Y}(I_{C}|_{Y},I_{C}|_{Y})
\rightarrow Ext^{2}_{X_{+}}(I_{C},I_{C}\otimes K_{X_{+}})\nonumber \end{equation}
\begin{equation}\rightarrow  Ext^{2}_{X_{+}}(I_{C},I_{C})\rightarrow Ext^{2}_{Y}(I_{C}|_{Y},I_{C}|_{Y}) \rightarrow
Ext^{3}_{X_{+}}(I_{C},I_{C}\otimes K_{X_{+}})\rightarrow 0. \nonumber \end{equation}
This determines a surjective morphism
\begin{equation}s: Ob\twoheadrightarrow \mathcal{N}_{Q/Q\times Q}, \nonumber \end{equation}
from the obstruction bundle $Ob$ with $Ob|_{I_{C}}=Ext^{2}_{X_{+}}(I_{C},I_{C})$
to the conormal bundle of $I_{0}(X_{+},[H])$ in $I_{1}(Q,0)$.
Furthermore, $rk(Ob)=codim(Q,Q\times Q)=3$ and conditions in Definition \ref{rel DT4 vc for ideal sheaves} are satisfied. The relative $DT_{4}$ virtual cycle is the usual fundamental class of the moduli space,
i.e. $[I_{0}(X_{+},[H])]\in H_{6}(I_{0}(X_{+},[H]),\mathbb{Z})$.  \\

(ii) If we fix the curve class to be the generator $[H_{Q}]\in H_{2}(Q,\mathbb{Z})\subseteq H_{2}(X_{+},\mathbb{Z})$,
the moduli space $I_{1}(X_{+},[H_{Q}])$ of ideal sheaves of curves in $X_{+}=Q\times \mathbb{P}^{1}$ with Chern character $c=(1,0,0,-PD([H_{Q}]),-1)$
can be identified with the product of $\mathbb{P}^{1}$ with the moduli space of primitive rational curves
in $Q$ (which consists of $2875$ rigid curves for a generic $Q\subseteq \mathbb{P}^{4}$ \cite{coxkatz}), i.e.
\begin{equation}I_{1}(X_{+},[H_{Q}])\cong \bigsqcup_{2875}\mathbb{P}^{1}. \nonumber \end{equation}

Curves in $\bigsqcup_{2875}\mathbb{C}^{*}$ have well-defined restriction to trivial line bundles on $(Q\times 0)\sqcup
(Q\times \infty)$. For curves in $\bigsqcup_{2875}\{0,\infty \}\subseteq I_{1}(X_{+},[H_{Q}])$, we introduce Li-Wu's expanded pair
to define the restriction map. We denote $X_{+}[1]_{0}=\Delta_{-1}\cup X_{+}\cup\Delta_{1}$, where $\Delta_{\pm1}\cong Q\times\mathbb{P}^{1}$,
and consider the moduli space $I_{1}(X_{+}[1]_{0},[H_{Q}])$ of relative ideal sheaves of curves (normal to the divisor $(Q\times 0)\sqcup
(Q\times \infty)$). $I_{1}(X_{+}[1]_{0},[H_{Q}])$ is the union of $\bigsqcup_{2875}\mathbb{C}^{*}$ with $\mathbb{C}^{*}$-equivalence
classes of curves inside $\Delta_{\pm1}\backslash((Q\times 0)\sqcup (Q\times \infty))$, i.e.
\begin{equation}I_{1}(X_{+}[1]_{0},[H_{Q}])\cong \bigsqcup_{2875}\mathbb{P}^{1}.  \nonumber \end{equation}
We then have a restriction map
\begin{equation}I_{1}(X_{+}[1]_{0},[H_{Q}])\rightarrow \{\mathcal{O}_{Q\times 0}\}\sqcup \{\mathcal{O}_{Q\times \infty}\}, \nonumber \end{equation}
\begin{equation}I_{C}\mapsto I_{C}|_{(Q\times 0)\sqcup (Q\times \infty)}=(\mathcal{O}_{Q\times 0},\mathcal{O}_{Q\times \infty})
\nonumber \end{equation}
to the moduli space of trivial line bundles on $(Q\times 0)\sqcup (Q\times \infty)\subseteq \Delta_{-1}\sqcup \Delta_{1}$.
By direct calculations, for any $I_{C}\in I_{1}(X_{+}[1]_{0},[H_{Q}])$, we have
\begin{equation}Ext^{1}_{X_{+}[1]_{0}}(I_{C},I_{C})\cong\mathbb{C}, \quad Ext^{3}_{X_{+}[1]_{0}}(I_{C},I_{C})\cong\mathbb{C}^{2},
\quad Ext^{i}_{X_{+}[1]_{0}}(I_{C},I_{C})=0, \textrm{  } i=2,4. \nonumber \end{equation}
Conditions in Definition \ref{rel DT4 vc for ideal sheaves} are satisfied and the relative $DT_{4}$ virtual cycle is the usual fundamental class of the moduli space, i.e.
$[I_{1}(X_{+}[1]_{0},[H_{Q}])]\in H_{2}(I_{1}(X_{+}[1]_{0},[H_{Q}]),\mathbb{Z})$.
\end{example}
We give a gluing formula of relative $DT_{4}$ invariants for the above example.
\begin{example}\label{ex gluing}
In Example \ref{eg 1} (ii), we consider $X_{0}=X_{+}\cup_{Y}X_{-}$ and its smoothing $X_{t}=Q\times \mathbb{T}^{2}$, where $X_{\pm}\cong Q\times \mathbb{P}^{1}$ and $Y=(Q\times 0)\sqcup (Q\times \infty)$. The moduli space $I_{1}(X_{\pm}[1]_{0},[H_{Q}])$ of relative ideal sheaves of curves satisfies
\begin{equation}I_{1}(X_{\pm}[1]_{0},[H_{Q}])\cong \bigsqcup_{2875}\mathbb{P}^{1}.  \nonumber \end{equation}
The relative $DT_{4}$ virtual cycle is usual fundamental class of $I_{1}(X_{\pm}[1]_{0},[H_{Q}])$. Meanwhile, the moduli space $I_{1}(X_{t},[H_{Q}])$ of ideal sheaves of curves in $X_{t}=Q\times \mathbb{T}^{2}$ with curve class $[H_{Q}]$ satisfies
\begin{equation}I_{1}(X_{t},[H_{Q}])\cong \bigsqcup_{2875}\mathbb{T}^{2}, \nonumber \end{equation}
and its $DT_{4}$ virtual cycle is the usual fundamental class of $I_{1}(X_{t},[H_{Q}])$ \cite{caoleung}. Under the homologous relation $X_{0}\sim X_{t}$, we have $\mathbb{P}^{1}\cup_{\{0,\infty\}}\mathbb{P}^{1}\sim \mathbb{T}^{2}$, then
the gluing formula is expressed by pairing $\mu$-map with these cycles (see Theorem \ref{dege formula} for such a formula).

\end{example}
\begin{example}(Generic quintic in $\mathbb{P}^{4}$)\label{ex quintic}

We take $X_{+}=\mathbb{P}^{4}$ which contains a smooth generic quintic 3-fold $Y=Q$ as its anti-canonical divisor, and then
$H_{2}(X_{+},\mathbb{Z})\cong H_{2}(Q,\mathbb{Z})\cong\mathbb{Z}$. We consider the primitive curve class $[H]\in H_{2}(X_{+},\mathbb{Z})$ and
ideal sheaves of curves representing this class have Chern character $c=(1,0,0,-PD([H]),\frac{3}{2})$. We denote their moduli space by
$I_{\frac{3}{2}}(X_{+},[H])$ ($\cong Gr(2,5)$).
The generic quintic $Q$ contains $2875$ rigid degree $1$ rational curves and $I_{\frac{3}{2}}(X,[H])$ contains a finite subset $S$ with $2875$ points. $I_{\frac{3}{2}}(X,[H])\backslash S$ has a well-defined restriction morphism to $Hilb^{5}(Q)$.
To extend the morphism across those $2875$ points, we introduce Li-Wu's expanded pair.

We denote $X_{+}[1]_{0}=X_{+}\cup\Delta_{1}$, $Y[1]_{0}(\cong Q)\subseteq\Delta_{1}$, where $\Delta_{1}\cong \mathbb{P}(\mathcal{O}_{Q}\oplus
\mathcal{O}_{\mathbb{P}^{4}}(5)|_{Q})$, and consider the moduli space $I_{\frac{3}{2}}(X_{+}[1]_{0},[H])$ of
relative ideal sheaves of curves (normal to the divisor $Q$). Geometrically, it
is the blow up of $I_{\frac{3}{2}}(X_{+},[H])$ along those $2875$ points, i.e.
\begin{equation}I_{\frac{3}{2}}(X_{+}[1]_{0},[H])\cong Bl_{S}(Gr(2,5)),  \nonumber \end{equation}
and each exceptional divisor corresponds to a $Hilb^{5}(\mathbb{P}^{1})$ ($\cong\mathbb{P}^{5}$) for each $\mathbb{P}^{1}\subseteq Q$.
We then have
a restriction morphism
\begin{equation}I_{\frac{3}{2}}(X_{+}[1]_{0},[H])\rightarrow Hilb^{5}(Y[1]_{0}),  \nonumber \end{equation}
\begin{equation}I_{C}\mapsto I_{C}|_{Y[1]_{0}},  \nonumber \end{equation}
which is injective with smooth image. By direct calculations, we have
\begin{equation}Ext^{1}_{X_{+}[1]_{0}}(I_{C},I_{C})\cong \mathbb{C}^{6}, \quad Ext^{2}_{X_{+}[1]_{0}}(I_{C},I_{C})\cong \mathbb{C}^{9}, \quad
Ext^{3}_{X_{+}[1]_{0}}(I_{C},I_{C})=0, \nonumber \end{equation}
and a long exact sequence
\begin{equation}0\rightarrow Ext^{1}_{X_{+}[1]_{0}}(I_{C},I_{C})\rightarrow Ext^{1}_{Y[1]_{0}}(I_{C}|_{Y},I_{C}|_{Y})
\rightarrow Ext^{2}_{X_{+}[1]_{0}}(I_{C},I_{C})^{*}\rightarrow \nonumber \end{equation}
\begin{equation}\rightarrow  Ext^{2}_{X_{+}[1]_{0}}(I_{C},I_{C})\rightarrow Ext^{1}_{Y[1]_{0}}(I_{C}|_{Y},I_{C}|_{Y})^{*}
\rightarrow Ext^{1}_{X_{+}[1]_{0}}(I_{C},I_{C})^{*}\rightarrow 0. \nonumber \end{equation}
As $I_{C}|_{Y}\in Hilb^{5}(Y[1]_{0})$ is a smooth point with $Ext^{1}_{Y[1]_{0}}(I_{C}|_{Y},I_{C}|_{Y})\cong \mathbb{C}^{15}$, conditions in Definition \ref{rel DT4 vc for ideal sheaves} are satisfied. The relative $DT_{4}$ virtual cycle is
the usual fundamental class of the moduli space $I_{\frac{3}{2}}(X_{+}[1]_{0},[H])\cong Bl_{S}(Gr(2,5))$.
\end{example}
We adapt Li-Wu's expanded degenerations to torsion sheaves and consider the following extension of Example \ref{eg 1}.
\begin{example}(Relative $DT_{4}$/$DT_{3}$)\label{ex rel DT4/DT3}

Let $X_{+}=Y_{1}\times \mathbb{P}^{1}$ which contains $Y=(Y_{1}\times 0)\sqcup (Y_{1}\times \infty)$ as an anti-canonical divisor,
where $Y_{1}$ is a compact Calabi-Yau 3-fold.
We denote $\mathfrak{M}_{c}(Y_{1})$ to be a Gieseker moduli space of torsion-free semi-stable sheaves on $Y_{1}$ with
Chern character $c\in H^{even}(Y_{1},\mathbb{Q})$ (we assume there is no strictly semi-stable sheaf), and denote $\mathfrak{M}_{c}(X_{+})$ to
be the moduli space of sheaves on $X_{+}$ which are push-forward of stable sheaves in $\mathfrak{M}_{c}(Y_{1}\times t)$ for some $t$
$(\mathfrak{M}_{c}(X_{+})\cong \mathfrak{M}_{c}(Y_{1})\times \mathbb{P}^{1})$. Let $\iota: Y_{1}\times t\hookrightarrow X_{+}$
be the natural embedding. For any stable sheaf $\mathcal{F}\in \mathfrak{M}_{c}(Y_{1})$, as in Lemma 6.4 of \cite{caoleung}, we have
\begin{equation}Ext^{1}_{X_{+}}(\iota_{*}\mathcal{F},\iota_{*}\mathcal{F})\cong Ext^{1}_{Y_{1}}(\mathcal{F},\mathcal{F})\oplus
Ext^{0}_{Y_{1}}(\mathcal{F},\mathcal{F}), \nonumber \end{equation}
\begin{equation}\label{max iso}Ext^{2}_{X_{+}}(\iota_{*}\mathcal{F},\iota_{*}\mathcal{F})\cong Ext^{2}_{Y_{1}}(\mathcal{F},\mathcal{F})
\oplus Ext^{2}_{Y_{1}}(\mathcal{F},\mathcal{F})^{*}. \end{equation}
Furthermore, under the above identifications, a Kuranishi map
\begin{equation}\kappa_{\iota_{*}\mathcal{F}}: Ext^{1}_{X_{+}}(\iota_{*}\mathcal{F},\iota_{*}\mathcal{F})\rightarrow
Ext^{2}_{X_{+}}(\iota_{*}\mathcal{F},\iota_{*}\mathcal{F})\nonumber \end{equation}
satisfies
\begin{equation} \kappa_{\iota_{*}\mathcal{F}}(a,b)=(\kappa_{\mathcal{F}}(a),0),  \nonumber \end{equation}
for some Kuranishi map $\kappa_{\mathcal{F}}: Ext^{1}_{Y_{1}}(\mathcal{F},\mathcal{F})\rightarrow Ext^{2}_{Y_{1}}(\mathcal{F},\mathcal{F})$ of
$\mathfrak{M}_{c}(Y_{1})$ at $\mathcal{F}$.

To have a well-defined restriction map, we introduce $X_{+}[1]_{0}=\Delta_{-1}\cup X_{+}\cup\Delta_{1}$,
where $\Delta_{\pm1}\cong Y_{1}\times\mathbb{P}^{1}$, and consider $\mathfrak{M}_{c}(X_{+}[1]_{0})$ to
be the union of $\mathfrak{M}_{c}(Y_{1})\times\mathbb{C}^{*}$ with the $\mathbb{C}^{*}$-equivalence classes of
$(\mathfrak{M}_{c}(Y_{1}\times 0)\times\mathbb{C}^{*})\sqcup (\mathfrak{M}_{c}(Y_{1}\times \infty)\times\mathbb{C}^{*})$, i.e.
\begin{equation}\mathfrak{M}_{c}(X_{+}[1]_{0})\cong \mathfrak{M}_{c}(Y_{1})\times\mathbb{P}^{1}.  \nonumber \end{equation}
As the supports are disjoint, we have $\mathcal{T}or^{i\geq1}(\iota_{*}\mathcal{F},\mathcal{O}_{Y})=0$ which implies
a long exact sequence similar to the one in Lemma \ref{fund exact seq for dege }, and get a canonical isomorphism
$Ext^{2}_{X_{+}[1]_{0}}(\iota_{*}\mathcal{F},\iota_{*}\mathcal{F})\cong Ext^{2}_{X_{+}[1]_{0}}(\iota_{*}\mathcal{F},\iota_{*}\mathcal{F})^{*}$
as (\ref{(-2)-stru}). In this specific case, there exists a canonical maximal isotropic subspace $Ext^{2}_{Y_{1}}(\mathcal{F},\mathcal{F})
\subseteq Ext^{2}_{X_{+}[1]_{0}}(\iota_{*}\mathcal{F},\iota_{*}\mathcal{F})$ (\ref{max iso}) and $Ext^{2}_{Y_{1}}(\mathcal{F},\mathcal{F})$'s
form a sheaf over $\mathfrak{M}_{c}(X_{+}[1]_{0})$, thus Conjectures \ref{conj0}, \ref{conj1} hold. Analogs to Theorem 6.5 \cite{caoleung}, the relative $DT_{4}$ virtual cycle satisfies
\begin{equation}[\mathfrak{M}^{rel}_{c}(X_{+}[1]_{0})]^{vir}=DT_{3}(\mathfrak{M}_{c}(Y_{1}))\cdot[\mathbb{P}^{1}]\in
H_{2}(\mathfrak{M}_{c}(X_{+}[1]_{0}),\mathbb{Z}),\nonumber \end{equation}
where $DT_{3}(\mathfrak{M}_{c}(Y_{1}))$ is the Donaldson-Thomas invariant defined by Thomas \cite{th}.
\end{example}

\section{Appendix on relative $DT_{4}$ invariants for ideal sheaves and gluing formulas}

\subsection{Li-Wu's good degeneration of Hilbert schemes}
In this subsection, we recall some basic notions and facts of Li-Wu's good degeneration of Hilbert schemes.
The precise definitions are left to their papers \cite{liwu}, \cite{wu}. \\
${}$ \\
\textbf{The stack of expanded degenerations}.
We first introduce the stack of expanded degenerations for pairs $(X_{\pm},Y)$. We replace a pair $(X_{+},Y)$ by expanded pairs of length $n$,
$(X_{+}[n]_{0},Y[n]_{0})$, i.e.
\begin{equation}X_{+}[n]_{0}=X_{+}\cup\Delta_{1}\cup\cdot\cdot\cdot\cup\Delta_{n},  \nonumber \end{equation}
which is a chain of smooth irreducible components intersecting transversally with $\Delta_{i}$ to be the $i^{th}$ copy of
$\Delta\triangleq \mathbb{P}(\mathcal{N}_{Y/X_{+}}\oplus \mathcal{O}_{Y})$. $\Delta$ is a $\mathbb{P}^{1}$ bundle over $Y$ with
two canonical divisors $Y_{\pm}$
such that $\mathcal{N}_{Y_{+}/X_{+}}\cong\mathcal{N}_{Y/X_{+}}$ and $\mathcal{N}_{Y_{-}/X_{+}}\cong\mathcal{N}_{Y/X_{+}}^{*}$. We denote
$Y[n]_{0}=Y_{+}$ to be the divisor in the last component $\Delta_{n}$. In fact, we can consider families of expanded pairs,
$(X_{+}[n],Y[n])$ over affine space $\mathbb{A}^{n}$ such that over $0\in \mathbb{A}^{n}$ it coincides with $(X_{+}[n]_{0},Y[n]_{0})$.
Then there exists a pair of Artin stacks $(\mathfrak{X}_{+},\mathfrak{Y})$ as the direct limit of stack quotients of $(X_{+}[n],Y[n])$
by certain group actions. The projection of $(X_{+}[n],Y[n])$ to the affine space $\mathbb{A}^{n}$ induces a morphism
$\mathfrak{Y}\subseteq \mathfrak{X}_{+}\rightarrow\mathfrak{A}_{\diamond}$, where $\mathfrak{A}_{\diamond}$ is the direct limit of
some stack quotients of the affine space $\mathbb{A}^{n+1}$.

To formula the gluing formula, we also need to replace the family $\mathcal{X}\rightarrow C$ by its expanded degeneration
$\mathfrak{X}\rightarrow \mathfrak{C}$, where $\mathfrak{X}$ is the direct limit of stack quotients of $X[n]$ and $X[n]$ is a
family over $C[n]\triangleq C\times_{\mathbb{A}^{1}}\mathbb{A}^{n+1}$, $\mathfrak{C}\triangleq C\times_{\mathbb{A}^{1}}\mathfrak{A}$
and $\mathfrak{A}$ is another
stack quotient of the affine space $\mathbb{A}^{n+1}$. A smooth chart of $\mathfrak{X}_{0}\triangleq \mathfrak{X}\times_{C}0$ is
\begin{equation}X[n]_{0}=X_{+}\cup\Delta_{1}\cup\cdot\cdot\cdot\cup\Delta_{n}\cup X_{-},  \nonumber \end{equation}
which is a chain of smooth irreducible components intersecting transversally with $\Delta_{i}$ to be the $i^{th}$ copy of
$\Delta\triangleq \mathbb{P}(\mathcal{N}_{Y/X_{+}}\oplus \mathcal{O}_{Y})\cong\mathbb{P}(\mathcal{N}_{Y/X_{-}}\oplus \mathcal{O}_{Y})$. We also denote
$\Delta_{0}=X_{+}$, $\Delta_{n+1}=X_{-}$.

If we consider $X[n]_{0}$ as the gluing of $(X_{\pm}[n]_{0},Y[n]_{0})$, we need to specify one of its divisor in some $\Delta_{i}$. This is
called a node-marking and there exists an Artin stack $\mathfrak{X}_{0}^{\dag}$ which is the collection of families in $\mathfrak{X}_{0}$ with
node-markings. One can construct a stack $\mathfrak{C}_{0}^{\dag}$ and an arrow $\mathfrak{C}_{0}^{\dag}\rightarrow \mathfrak{C}$ that fits
into a Cartesian product
\begin{equation}
\xymatrix{
  \mathfrak{X}_{0}^{\dag} \ar[d]_{} \ar[r]^{ }
                & \mathfrak{X} \ar[d]^{ }  \\
  \mathfrak{C}_{0}^{\dag}   \ar[r]_{ }
                & \mathfrak{C}        }
\nonumber \end{equation}
By Proposition 2.13 \cite{liwu}, there exists a canonical isomorphism $\mathfrak{C}_{0}^{\dag}\cong \mathfrak{A}_{\diamond}$.

To fix Hilbert polynomials of ideal sheaves over $X[n]_{0}$ and decompose them into ideal sheaves of fixed Hilbert polynomials on
$(X_{\pm}[n]_{0},Y[n]_{0})$, we introduce
\begin{equation}\Lambda_{P}^{spl}\triangleq \{\delta=(\delta_{\pm},\delta_{0}) \textrm{ }  |  \textrm{ } \delta_{+}+\delta_{-}-\delta_{0}=P \},
\nonumber \end{equation}
where $\delta_{\pm}$, $\delta_{0}$, $P$ are polynomials in $\mathcal{A}\triangleq\mathcal{A}^{*}\cup\{0\}$, and $\mathcal{A}^{*}$ is the set of
$\mathbb{Q}$-coefficient polynomials whose leading terms are of the form $a_{r}\frac{k^{r}}{r!}$ with $a_{r}\in \mathbb{Z}_{+}$.

We define the stack $\mathfrak{X}_{0}^{\dag,\delta}$ whose closed points are
$(X[n]_{0},Y_{k},w)$, where $w$ is a function such that
\begin{equation}w(\Delta_{[0,k-1]})=\delta_{-}, \quad w(\Delta_{[k,n+1]})=\delta_{+} , \quad w(Y_{k})=\delta_{0} \nonumber \end{equation}

We similarly assign functions $w_{\pm}$ to $(X_{\pm}[n]_{0},Y[n]_{0})$ with
\begin{equation}w_{\pm}(\Delta_{[0,n]})=\delta_{\pm}, \quad w_{\pm}(Y[n]_{0})=\delta_{0} \nonumber \end{equation}
and define stacks $\mathfrak{X}_{\pm}^{\delta_{\pm},\delta_{0}}$. Then there exists stacks $\mathfrak{A}_{\diamond}^{\delta_{\pm},\delta_{0}}$ so
that we have Cartesian product
\begin{equation}
\xymatrix{
  \mathfrak{X}_{\pm}^{\delta_{\pm},\delta_{0}} \ar[d]_{} \ar[r]^{ }
                & \mathfrak{X}_{\pm} \ar[d]^{ }  \\
   \mathfrak{A}_{\diamond}^{\delta_{\pm},\delta_{0}}   \ar[r]_{ }
                & \mathfrak{A}_{\diamond}        }
\nonumber \end{equation}
By gluing two components, we obtain the following commutative diagram
\begin{equation}
\xymatrix{
   \mathfrak{X}_{+}^{\delta_{\pm},\delta_{0}}\times \mathfrak{X}_{-}^{\delta_{\pm},\delta_{0}}  \ar[d]_{} \ar[r]^{ }
                & \mathfrak{X}_{0}^{\dag,\delta} \ar[d]^{ }  \\
    \mathfrak{A}_{\diamond}^{\delta_{+},\delta_{0}}\times \mathfrak{A}_{\diamond}^{\delta_{-},\delta_{0}}  \ar[r]^{\quad \quad \cong}
                &  \mathfrak{C}_{0}^{\dag,\delta}  }. \nonumber \end{equation}
We denote $\mathfrak{C}_{0}^{\dag,P}=\bigsqcup_{\delta\in\Lambda_{P}^{spl}}\mathfrak{C}_{0}^{\dag,\delta}$ and then
have a natural morphism $\Phi_{\delta}:\mathfrak{C}_{0}^{\dag,\delta}\rightarrow \mathfrak{C}^{P}$ as the composition of the imbedding
$\mathfrak{C}_{0}^{\dag,\delta}\rightarrow \mathfrak{C}_{0}^{\dag,P}$ with forgetful map $\mathfrak{C}_{0}^{\dag,P}\rightarrow \mathfrak{C}^{P}$.
\begin{lemma}(Li-Wu, Proposition 2.19 \cite{liwu})\label{complete int1} ${}$ \\
There are canonical line bundles with sections $(L_{\delta},s_{\delta})$ on $\mathfrak{C}^{P}$ indexed by $\delta\in\Lambda_{P}^{spl}$, such that

(1) let $t$ be the standard coordinate function on $\mathbb{A}^{1}$ and $\pi:\mathfrak{C}^{P}\rightarrow \mathbb{A}^{1}$ be
the tautological projection, then
\begin{equation}\bigotimes_{\delta\in\Lambda_{P}^{spl}}L_{\delta}\cong\mathcal{O}_{\mathfrak{C}^{P}}, \quad
\prod_{\delta\in\Lambda_{P}^{spl}}s_{\delta}=\pi^{*}t; \nonumber \end{equation}

(2) $\Phi_{\delta}$ factors through $s_{\delta}^{-1}(0)\subseteq \mathfrak{C}^{P}$ and there exists an isomorphism
$s_{\delta}^{-1}(0)\cong \mathfrak{C}_{0}^{\dag,\delta}$.
\end{lemma}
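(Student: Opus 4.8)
The plan is to establish the statement on a smooth atlas of the Artin stack $\mathfrak{C}^{P}$ and then descend. Recall $\mathfrak{C}\triangleq C\times_{\mathbb{A}^{1}}\mathfrak{A}$ is presented by the family $C[n]=C\times_{\mathbb{A}^{1}}\mathbb{A}^{n+1}$ modulo the torus action used in building $\mathfrak{A}$, where the structure map $\mathbb{A}^{n+1}\rightarrow\mathbb{A}^{1}$ is the multiplication $(\sigma_{0},\dots,\sigma_{n})\mapsto\sigma_{0}\cdots\sigma_{n}$ and the acting torus is precisely the $(\mathbb{C}^{*})^{n}$ preserving this product. Thus on the atlas the pulled-back coordinate satisfies $\pi^{*}t=\sigma_{0}\sigma_{1}\cdots\sigma_{n}$, and each hyperplane $\{\sigma_{i}=0\}$ is the boundary divisor along which the $i^{th}$ node of the chain $X[n]_{0}$ is split. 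First I would record this product decomposition of $\pi^{*}t$ as the fundamental source of the factorization in part (1).

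Next I would refine this decomposition according to the Hilbert polynomial. Fixing $P$, the stack $\mathfrak{C}^{P}$ carries a weight function $w$ distributing $P$ over the components of each fiber; splitting the chain at a node forces $w$ to break as $\delta=(\delta_{\pm},\delta_{0})$ with $\delta_{+}+\delta_{-}-\delta_{0}=P$, i.e. an element of $\Lambda_{P}^{spl}$. Consequently each boundary locus $\{\sigma_{i}=0\}$ becomes a disjoint union of pieces indexed by the admissible $\delta$, and regrouping the factors of $\pi^{*}t$ produces a section $s_{\delta}$ whose zero locus is exactly the $\delta$-split boundary. Defining $L_{\delta}$ to be the line bundle on which $s_{\delta}$ lives, the identity $\prod_{\delta}s_{\delta}=\pi^{*}t$ is immediate from $\pi^{*}t=\prod_{i}\sigma_{i}$.

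For the triviality $\bigotimes_{\delta}L_{\delta}\cong\mathcal{O}_{\mathfrak{C}^{P}}$, I would track the $(\mathbb{C}^{*})^{n}$-weights: each $\sigma_{i}$ is not invariant but transforms by an explicit character, so it descends to a section of a line bundle of that weight, whereas $\pi^{*}t$ is pulled back from $\mathbb{A}^{1}$ and is therefore invariant. Hence the total weight of the tensor product of the $\sigma_{i}$-bundles vanishes and that bundle is canonically trivial; the same computation organized by $\delta$ gives $\bigotimes_{\delta}L_{\delta}\cong\mathcal{O}_{\mathfrak{C}^{P}}$. Equivariance of the $s_{\delta}$ and $L_{\delta}$ under the group action guarantees that they descend from the atlas to honest data on the stack, which completes (1).

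Finally, for part (2) I would identify $s_{\delta}^{-1}(0)$ with $\mathfrak{C}_{0}^{\dag,\delta}$. By construction $s_{\delta}^{-1}(0)$ is the substack parametrizing chains split at a marked node with splitting type $\delta$, which is precisely the node-marked stack $\mathfrak{C}_{0}^{\dag,\delta}$ appearing in the Cartesian diagrams above; the inclusion $\mathfrak{C}_{0}^{\dag,\delta}\hookrightarrow\mathfrak{C}_{0}^{\dag,P}$ followed by the forgetful map $\mathfrak{C}_{0}^{\dag,P}\rightarrow\mathfrak{C}^{P}$ is $\Phi_{\delta}$, so $\Phi_{\delta}$ factors through $s_{\delta}^{-1}(0)$ and induces the asserted isomorphism. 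The main obstacle I anticipate is the bookkeeping in the second step: correctly matching the combinatorial distribution of $P$ encoded by $\Lambda_{P}^{spl}$ with the decomposition of each boundary divisor into its components, and carrying out the character computation so that the weights of the $L_{\delta}$ sum to zero. Everything else is descent along the smooth atlas and is routine once the equivariance is checked.
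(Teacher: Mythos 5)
The paper does not prove this lemma at all: it is quoted verbatim from Li--Wu \cite{liwu} (their Proposition 2.19), so there is no internal argument to compare against. Your outline does track the shape of Li--Wu's actual construction --- present $\mathfrak{C}^{P}$ by charts built from $C[n]=C\times_{\mathbb{A}^{1}}\mathbb{A}^{n+1}$, observe $\pi^{*}t=\sigma_{0}\cdots\sigma_{n}$, regroup the factors by splitting type, and descend by $(\mathbb{C}^{*})^{n}$-equivariance; the character computation forcing $\bigotimes_{\delta}L_{\delta}\cong\mathcal{O}_{\mathfrak{C}^{P}}$ is also the right mechanism.

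Two steps, however, are stated too loosely to constitute a proof. First, the claim that each boundary locus $\{\sigma_{i}=0\}$ ``becomes a disjoint union of pieces indexed by the admissible $\delta$'' is not what happens: on a chart the hyperplane $\{\sigma_{i}=0\}$ is irreducible, and it is the weight function $w$ --- which is part of the chart data defining $\mathfrak{C}^{P}$ rather than something varying over the chart --- that assigns to the $i$-th node a single splitting, by summing the weights of the components on either side of that node together with the weight on the divisor. The correct regrouping is therefore a partition of the index set of nodes, $s_{\delta}=\prod\sigma_{i}$ taken over those $i$ whose induced splitting equals $\delta$; without making the role of the decoration by $P$ explicit one cannot pass from the $n+1$ coordinates to a factorization indexed by $\Lambda_{P}^{spl}$, whose cardinality is independent of $n$. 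Second, the identification $s_{\delta}^{-1}(0)\cong\mathfrak{C}_{0}^{\dag,\delta}$ is asserted ``by construction,'' but on a chart $s_{\delta}^{-1}(0)$ is a union of several hyperplanes (one for each node of type $\delta$, and distinct nodes can share a type when the intervening components carry weight zero), whereas $\mathfrak{C}_{0}^{\dag,\delta}$ is defined by a choice of node-marking; one must check that the chart gluings and the isotropy at points where two such hyperplanes meet reproduce exactly the node-marked stack, which is where the normalization statement following the lemma comes from. These two pieces of bookkeeping are precisely where the content of Li--Wu's proof lies; the remainder of your sketch (equivariance, descent, triviality of the tensor product) is routine as you say.
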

This states that $\mathfrak{C}^{P}_{0}\subseteq \mathfrak{C}^{P}$ is a complete intersection substack with
$\bigsqcup_{\delta\in\Lambda_{P}^{spl}}\mathfrak{C}^{\dag,\delta}_{0}$ as its normalization. \\
${}$ \\
\textbf{Moduli stacks of stable ideal sheaves}.
By Theorem 4.14 \cite{liwu}, there exists a Deligne-Mumford stack $\mathfrak{I}^{P}_{\mathfrak{X}/\mathfrak{C}}$ which is finite type,
separated and proper over $C$. It is a good degeneration of Hilbert scheme of subschemes of $X/C$ with fixed Hilbert polynomial $P$ in the sense that
\begin{equation}\mathfrak{I}^{P}_{\mathfrak{X}/\mathfrak{C}}\times_{C}t\cong Hilb^{P}(X_{t}), \quad t\neq0   \nonumber \end{equation}
and the central fiber $\mathfrak{I}^{P}_{\mathfrak{X}_{0}/\mathfrak{C}_{0}}\triangleq \mathfrak{I}^{P}_{\mathfrak{X}/\mathfrak{C}}\times_{C}0$
has a good obstruction theory. We recall that any closed point of $\mathfrak{I}^{P}_{\mathfrak{X}_{0}/\mathfrak{C}_{0}}$ is an ideal sheaf
$I_{Z}$ in some $X[n]_{0}$ such that

(1) $\mathcal{O}_{Z}$ is normal to all $Y_{i}\subseteq X[n]_{0}$, i.e. $Tor^{1}_{\mathcal{O}_{X[n]_{0}}}(\mathcal{O}_{Z},\mathcal{O}_{Y_{i}})=0$;

(2) $Aut_{\mathfrak{X}}(I_{Z})$ is finite.

(3) The Hilbert polynomial of $\mathcal{O}_{Z}$ is $P$. \\

We define
\begin{equation}\mathfrak{I}^{\delta}_{\mathfrak{X}_{0}^{\dag}/\mathfrak{C}_{0}^{\dag}}\triangleq
\mathfrak{I}^{P}_{\mathfrak{X}/\mathfrak{C}}\times_{\mathfrak{C}^{P}}\mathfrak{C}_{0}^{\dag,\delta}. \nonumber \end{equation}
It parameterizes ideal sheaves $I_{Z}$'s on $X[n]_{0}$ with a node-marking $Y_{k}\subseteq X[n]_{0}$ so that the Hilbert polynomials
of $\mathcal{O}_{Z}$ restricted to $\cup_{i<k}\Delta_{i}$, to $\cup_{i\geq k}\Delta_{i}$ and to $Y_{k}$ are
$\delta_{-}$, $\delta_{+}$ and $\delta_{0}$ respectively.

We can similarly define the moduli stack of stable relative ideal sheaves for $\mathfrak{Y}\subseteq \mathfrak{X}_{\pm}$ with pair Hilbert polynomial
$(\delta_{\pm},\delta_{0})$, denoted by $\mathfrak{I}^{\delta_{\pm},\delta_{0}}_{\mathfrak{X}_{\pm}/\mathfrak{A}_{\diamond}}$, which are
finite type, separated and proper Deligne-Mumford stacks (Theorem 4.15 \cite{liwu}).
The relations between $\mathfrak{I}^{P}_{\mathfrak{X}/\mathfrak{C}}$, $\mathfrak{I}^{\delta}_{\mathfrak{X}_{0}^{\dag}/\mathfrak{C}_{0}^{\dag}}$
and $\mathfrak{I}^{\delta_{\pm},\delta_{0}}_{\mathfrak{X}_{\pm}/\mathfrak{A}_{\diamond}}$ are described as follows.
\begin{lemma}(Li-Wu, Theorem 5.27 \cite{liwu})\label{lemma on decomposition} ${}$ \\
(1) There exists natural restriction morphisms
$\mathfrak{I}^{\delta_{\pm},\delta_{0}}_{\mathfrak{X}_{\pm}/\mathfrak{A}_{\diamond}}\rightarrow Hilb^{\delta_{0}}_{Y}$,
where $Hilb^{\delta_{0}}_{Y}$ is the Hilbert scheme on $Y$ with fixed Hilbert polynomial $\delta_{0}$, and an isomorphism
\begin{equation}\mathfrak{I}^{\delta_{-},\delta_{0}}_{\mathfrak{X}_{-}/\mathfrak{A}_{\diamond}}\times_{Hilb^{\delta_{0}}_{Y}}
\mathfrak{I}^{\delta_{+},\delta_{0}}_{\mathfrak{X}_{+}/\mathfrak{A}_{\diamond}}\rightarrow
\mathfrak{I}^{\delta}_{\mathfrak{X}_{0}^{\dag}/\mathfrak{C}_{0}^{\dag}}. \nonumber \end{equation}
${}$ \\
(2) Let $(L_{\delta},s_{\delta})$ be as in Lemma \ref{complete int1} and $\pi_{P}: \mathfrak{I}^{P}_{\mathfrak{X}/\mathfrak{C}}\rightarrow
\mathfrak{C}^{P}$ be the natural projection. Then
\begin{equation}\bigotimes_{\delta\in\Lambda_{P}^{spl}}\pi_{P}^{*}L_{\delta}\cong\mathcal{O}_{\mathfrak{I}^{P}_{\mathfrak{X}/\mathfrak{C}}}, \quad
\prod_{\delta\in\Lambda_{P}^{spl}}\pi_{P}^{*}s_{\delta}=\pi_{P}^{*}\pi^{*}t; \nonumber \end{equation}
As closed substacks of $\mathfrak{I}^{P}_{\mathfrak{X}/\mathfrak{C}}$, we have $\mathfrak{I}^{\delta}_{\mathfrak{X}_{0}^{\dag}/\mathfrak{C}_{0}^{\dag}}
\cong (\pi_{P}^{*}s_{\delta}=0)$.
\end{lemma}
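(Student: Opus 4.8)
The plan is to treat the two parts separately, since Part (2) is essentially a formal pullback of Lemma \ref{complete int1} while Part (1) carries the geometric content, namely the gluing and splitting of ideal sheaves along the divisor $Y$. I would begin with Part (1). For the restriction morphisms $\mathfrak{I}^{\delta_{\pm},\delta_{0}}_{\mathfrak{X}_{\pm}/\mathfrak{A}_{\diamond}}\rightarrow Hilb^{\delta_{0}}_{Y}$, the key input is the normality condition built into closed points of these stacks: a relative ideal sheaf $I_{Z}$ on $X_{\pm}[n]_{0}$ satisfies $Tor^{1}(\mathcal{O}_{Z},\mathcal{O}_{Y[n]_{0}})=0$. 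This makes $\mathcal{O}_{Z}|_{Y[n]_{0}}=\mathcal{O}_{Z\cap Y[n]_{0}}$ a flat restriction, so $Z\cap Y[n]_{0}$ is a subscheme of $Y$ with Hilbert polynomial $\delta_{0}$, and the assignment $I_{Z}\mapsto I_{Z\cap Y[n]_{0}}$ is functorial in flat families over an arbitrary base, which defines the morphism of stacks.

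For the gluing isomorphism I would construct the map on families rather than only on closed points. Given a point of the fibre product, i.e. a matched pair $(I_{Z_{-}},I_{Z_{+}})$ of relative ideal sheaves whose restrictions to $Y$ coincide as subschemes, one glues the expanded pairs transversally along $Y$ to form a chain $X[n]_{0}$ carrying a distinguished node, and then glues $\mathcal{O}_{Z_{-}}$ and $\mathcal{O}_{Z_{+}}$ over that node. Because the two restrictions to $Y$ agree and both sheaves are normal to $Y$, the glued sheaf $\mathcal{O}_{Z}$ is again normal to every $Y_{i}\subseteq X[n]_{0}$, has the prescribed Hilbert polynomials on the two subchains and on the node $Y_{k}$, and has finite automorphisms; hence it is a point of $\mathfrak{I}^{\delta}_{\mathfrak{X}_{0}^{\dag}/\mathfrak{C}_{0}^{\dag}}$ with node-marking $Y_{k}$. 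The inverse takes a node-marked ideal sheaf on $X[n]_{0}$ and splits it at the node into its restrictions to the two subchains; normality guarantees the two pieces restrict to the same subscheme of $Y_{k}$, so they land in the fibre product. I would then verify that these constructions are mutually inverse and functorial, yielding the claimed isomorphism.

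For Part (2), I would simply pull back Lemma \ref{complete int1} along $\pi_{P}$. Since pullback is a tensor functor and commutes with multiplying sections, the relations $\bigotimes_{\delta}L_{\delta}\cong\mathcal{O}_{\mathfrak{C}^{P}}$ and $\prod_{\delta}s_{\delta}=\pi^{*}t$ yield at once $\bigotimes_{\delta}\pi_{P}^{*}L_{\delta}\cong\mathcal{O}_{\mathfrak{I}^{P}_{\mathfrak{X}/\mathfrak{C}}}$ and $\prod_{\delta}\pi_{P}^{*}s_{\delta}=\pi_{P}^{*}\pi^{*}t$. For the identification of substacks I would use the definition $\mathfrak{I}^{\delta}_{\mathfrak{X}_{0}^{\dag}/\mathfrak{C}_{0}^{\dag}}=\mathfrak{I}^{P}_{\mathfrak{X}/\mathfrak{C}}\times_{\mathfrak{C}^{P}}\mathfrak{C}_{0}^{\dag,\delta}$ together with the isomorphism $\mathfrak{C}_{0}^{\dag,\delta}\cong s_{\delta}^{-1}(0)\subseteq\mathfrak{C}^{P}$ from Lemma \ref{complete int1}(2); pulling back the closed immersion $s_{\delta}^{-1}(0)\hookrightarrow\mathfrak{C}^{P}$ exhibits $\mathfrak{I}^{\delta}_{\mathfrak{X}_{0}^{\dag}/\mathfrak{C}_{0}^{\dag}}$ as the scheme-theoretic zero locus $(\pi_{P}^{*}s_{\delta}=0)$ inside $\mathfrak{I}^{P}_{\mathfrak{X}/\mathfrak{C}}$.

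The main obstacle is the gluing step in Part (1): showing that the glued sheaf is stable and normal to all divisors, and that the construction is a genuine isomorphism of stacks rather than merely a bijection on points. The delicate points are functoriality in families, since one must glue over an arbitrary base and not only at closed points, and the verification that the $Tor$-vanishing normality condition is precisely what makes gluing and splitting inverse to one another. This is where Li-Wu's descent argument along the divisor and their properness and finite-type statements (Theorems 4.14 and 4.15 \cite{liwu}) carry the essential weight; the remaining steps, including all of Part (2), are then formal consequences of the definitions and Lemma \ref{complete int1}.
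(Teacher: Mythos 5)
This lemma is stated in the paper as an imported result (Li--Wu, Theorem 5.27 of \cite{liwu}) and is given no proof there, so there is no internal argument to compare yours against. Your outline is nonetheless a faithful reconstruction of how the result is established: the restriction morphism is exactly the flat restriction to $Y$ made possible by the $Tor$-vanishing normality condition built into closed points of $\mathfrak{I}^{\delta_{\pm},\delta_{0}}_{\mathfrak{X}_{\pm}/\mathfrak{A}_{\diamond}}$ (the same vanishing the paper exploits later for its long exact sequences), the gluing/splitting at the marked node is the content of part (1), and part (2) is indeed purely formal given the definition $\mathfrak{I}^{\delta}_{\mathfrak{X}_{0}^{\dag}/\mathfrak{C}_{0}^{\dag}}=\mathfrak{I}^{P}_{\mathfrak{X}/\mathfrak{C}}\times_{\mathfrak{C}^{P}}\mathfrak{C}_{0}^{\dag,\delta}$ together with Lemma \ref{complete int1}. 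You are also right, and commendably explicit, that the genuinely hard steps --- functoriality of the gluing over an arbitrary base, stability and finiteness of automorphisms of the glued object, and the verification that splitting and gluing are mutually inverse at the level of stacks --- are precisely what Li--Wu's Theorems 4.14, 4.15 and 5.27 supply; your proposal defers to the same source the paper does, so nothing is lost relative to the paper's treatment.
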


\subsection{Relative $DT_{4}$ virtual cycles}
We study obstruction theories of Deligne-Mumford stacks $\mathfrak{I}^{\delta}_{\mathfrak{X}_{0}^{\dag}/\mathfrak{C}_{0}^{\dag}}$
and $\mathfrak{I}^{\delta_{\pm},\delta_{0}}_{\mathfrak{X}_{\pm}/\mathfrak{A}_{\diamond}}$.
\begin{lemma}\label{fund exact seq for dege }
We take a closed point $[I_{Z}]\in \mathfrak{I}^{\delta_{+},\delta_{0}}_{\mathfrak{X}_{+}/\mathfrak{A}_{\diamond}}$ with
$Z\subseteq X_{+}[n]_{0}$, then for $Y=Y[n]_{0}$,
we have a short exact sequence
\begin{equation}\label{SES in X[n]}0\rightarrow I_{Z}\otimes \mathcal{O}_{X_{+}[n]_{0}}(-Y)\rightarrow I_{Z}\rightarrow
I_{Z}\otimes\mathcal{O}_{Y}\rightarrow 0
\end{equation}
and a long exact sequence
\begin{equation}\cdot\cdot\cdot\rightarrow Ext^{1}(I_{Z},I_{Z}\otimes \mathcal{O}_{X_{+}[n]_{0}}(-Y))\rightarrow
Ext^{1}(I_{Z},I_{Z})\rightarrow Ext^{1}_{Y}(I_{Z}\otimes\mathcal{O}_{Y},I_{Z}\otimes\mathcal{O}_{Y})
\rightarrow \nonumber \end{equation}
\begin{equation}\rightarrow Ext^{2}(I_{Z},I_{Z}\otimes \mathcal{O}_{X_{+}[n]_{0}}(-Y))
\rightarrow  Ext^{2}(I_{Z},I_{Z})\rightarrow Ext^{2}_{Y}(I_{Z}\otimes\mathcal{O}_{Y},I_{Z}\otimes\mathcal{O}_{Y})
\rightarrow \nonumber \end{equation}
\begin{equation}\rightarrow Ext^{3}(I_{Z},I_{Z}\otimes \mathcal{O}_{X_{+}[n]_{0}}(-Y))\rightarrow
Ext^{3}(I_{Z},I_{Z})\rightarrow \cdot\cdot\cdot. \quad\quad\quad\quad\quad\quad \quad\quad\quad\quad \quad \nonumber \end{equation}
\end{lemma}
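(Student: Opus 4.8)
The plan is to run the same argument as in Lemma \ref{def-obs LES}: tensor the structure sequence of the divisor with $I_Z$ and then apply $RHom(I_Z,-)$. The only extra care needed is that $I_Z$ is no longer locally free, so I must control the relevant $\mathcal{T}or$-sheaves. First I would record the geometry of the divisor: $Y=Y[n]_{0}=Y_{+}$ sits inside the last smooth component $\Delta_{n}$ of $X_{+}[n]_{0}=X_{+}\cup\Delta_{1}\cup\cdots\cup\Delta_{n}$ and is disjoint from every node $\Delta_{i-1}\cap\Delta_{i}$. Hence a neighbourhood of $Y$ in $X_{+}[n]_{0}$ is smooth, $Y$ is a Cartier divisor, $\mathcal{O}_{X_{+}[n]_{0}}(-Y)$ is a line bundle, and $\mathcal{O}_{Y}$ admits the Koszul resolution $0\to\mathcal{O}_{X_{+}[n]_{0}}(-Y)\to\mathcal{O}_{X_{+}[n]_{0}}\to\mathcal{O}_{Y}\to0$ by locally free sheaves. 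In particular $\mathcal{T}or^{j}(\mathcal{F},\mathcal{O}_{Y})=0$ for all $\mathcal{F}$ and all $j\geq2$.

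Next I would tensor this Koszul sequence with $I_{Z}$. Right-exactness of $\otimes$ gives exactness everywhere except possibly at the left end, where the kernel of $I_{Z}\otimes\mathcal{O}_{X_{+}[n]_{0}}(-Y)\to I_{Z}$ is $\mathcal{T}or^{1}(I_{Z},\mathcal{O}_{Y})$. Tensoring the ideal sequence $0\to I_{Z}\to\mathcal{O}_{X_{+}[n]_{0}}\to\mathcal{O}_{Z}\to0$ with $\mathcal{O}_{Y}$ and using flatness of $\mathcal{O}_{X_{+}[n]_{0}}$ yields $\mathcal{T}or^{1}(I_{Z},\mathcal{O}_{Y})\cong\mathcal{T}or^{2}(\mathcal{O}_{Z},\mathcal{O}_{Y})$, and the latter vanishes by the length-one resolution above (this is also compatible with the normality condition $\mathcal{T}or^{1}(\mathcal{O}_{Z},\mathcal{O}_{Y})=0$ that is built into the definition of a closed point of $\mathfrak{I}^{\delta_{+},\delta_{0}}_{\mathfrak{X}_{+}/\mathfrak{A}_{\diamond}}$). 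This establishes the short exact sequence (\ref{SES in X[n]}).

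Then I would apply $RHom_{X_{+}[n]_{0}}(I_{Z},-)$ to (\ref{SES in X[n]}) to obtain the long exact sequence with terms $Ext^{i}(I_{Z},I_{Z}\otimes\mathcal{O}_{X_{+}[n]_{0}}(-Y))$, $Ext^{i}(I_{Z},I_{Z})$ and $Ext^{i}(I_{Z},I_{Z}\otimes\mathcal{O}_{Y})$. It then remains to identify the third term with an Ext group computed on $Y$. Writing $j\colon Y\hookrightarrow X_{+}[n]_{0}$ for the closed embedding and $I_{Z}|_{Y}\triangleq I_{Z}\otimes\mathcal{O}_{Y}$, I have $I_{Z}\otimes\mathcal{O}_{Y}=j_{*}(I_{Z}|_{Y})$, and the adjunction $Lj^{*}\dashv j_{*}$ (using $Rj_{*}=j_{*}$ for a closed immersion) gives $Ext^{i}_{X_{+}[n]_{0}}(I_{Z},j_{*}(I_{Z}|_{Y}))\cong Ext^{i}_{Y}(Lj^{*}I_{Z},I_{Z}|_{Y})$. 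The vanishing $\mathcal{T}or^{\geq1}(I_{Z},\mathcal{O}_{Y})=0$ from the previous step shows $Lj^{*}I_{Z}\simeq j^{*}I_{Z}=I_{Z}|_{Y}$ is concentrated in degree zero, so $Ext^{i}_{Y}(Lj^{*}I_{Z},I_{Z}|_{Y})\cong Ext^{i}_{Y}(I_{Z}|_{Y},I_{Z}|_{Y})$. Substituting this back produces exactly the stated long exact sequence.

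The main obstacle is the final identification: one must ensure that the Ext group over the \emph{reducible} scheme $X_{+}[n]_{0}$ genuinely collapses to an Ext group over the smooth $Y$, which is precisely the point where the derived restriction $Lj^{*}I_{Z}$ must be shown to carry no higher cohomology. This is guaranteed by the two $\mathcal{T}or$-vanishings above, which in turn rest on the geometric fact that $Y$ meets $X_{+}[n]_{0}$ only in its smooth locus and is a Cartier divisor there. Once this is secured, everything else is the standard long exact sequence attached to the two-step filtration (\ref{SES in X[n]}), exactly parallel to the locally free case of Lemma \ref{def-obs LES}.
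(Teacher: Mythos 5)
Your proof is correct, and its skeleton is the same as the paper's: establish $\mathcal{T}or^{i\geq 2}(\mathcal{O}_{Z},\mathcal{O}_{Y})=0$ from the two-term Koszul resolution of the Cartier divisor $Y$, deduce $\mathcal{T}or^{i\geq1}(I_{Z},\mathcal{O}_{Y})=0$ via the ideal sheaf sequence, conclude the short exact sequence (\ref{SES in X[n]}), and then apply $Hom(I_{Z},-)$. The only place you diverge is the final identification $Ext^{i}(I_{Z},I_{Z}\otimes\mathcal{O}_{Y})\cong Ext^{i}_{Y}(I_{Z}|_{Y},I_{Z}|_{Y})$: the paper takes a finite-length locally free resolution $E^{\bullet}\rightarrow I_{Z}$ (quoting Corollary 2.9 of Wu, which is what guarantees such a resolution exists on the reducible space $X_{+}[n]_{0}$) and runs a local-to-global spectral sequence, using the $\mathcal{T}or$-vanishing to see that $E^{\bullet}|_{Y}$ still resolves $I_{Z}|_{Y}$; you instead invoke the derived adjunction $Lj^{*}\dashv Rj_{*}=j_{*}$ for the closed immersion $j:Y\hookrightarrow X_{+}[n]_{0}$ and use the same $\mathcal{T}or$-vanishing to see that $Lj^{*}I_{Z}$ is concentrated in degree zero. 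Your packaging is slightly cleaner in that it does not require a \emph{finite} locally free resolution (a bounded-above flat resolution suffices to compute $Lj^{*}$), so the citation to Wu can be dropped; the paper's version makes the mechanism more explicit at the chain level. Both arguments rest on exactly the same geometric input, namely that $Y[n]_{0}$ lies in the smooth locus of $X_{+}[n]_{0}$ and is Cartier there.
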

\begin{proof}
We tensor $0\rightarrow I_{Z}\rightarrow \mathcal{O}_{X_{+}[n]_{0}}\rightarrow \mathcal{O}_{Z}\rightarrow 0$ with $\mathcal{O}_{Y}$ and get
\begin{equation}\label{tor vanishi0}\mathcal{T}or_{\mathcal{O}_{X_{+}[n]_{0}}}^{i+1}(\mathcal{O}_{Z},\mathcal{O}_{Y})\cong
\mathcal{T}or_{\mathcal{O}_{X_{+}[n]_{0}}}^{i}(I_{Z},\mathcal{O}_{Y}), \quad i\geq1.  \end{equation}
Applying tensor product with $\mathcal{O}_{Z}$ to
$0\rightarrow\mathcal{O}_{X_{+}[n]_{0}}(-Y)\rightarrow\mathcal{O}_{X_{+}[n]_{0}}\rightarrow\mathcal{O}_{Y}\rightarrow0$, we get
\begin{equation}\label{tor vanishi}\mathcal{T}or_{\mathcal{O}_{X_{+}[n]_{0}}}^{i\geq2}(\mathcal{O}_{Z},\mathcal{O}_{Y})=0. \end{equation}
These ensure that we have the short exact sequence (\ref{SES in X[n]}) after tensoring  $0\rightarrow\mathcal{O}_{X_{+}[n]_{0}}(-Y)
\rightarrow\mathcal{O}_{X_{+}[n]_{0}}\rightarrow\mathcal{O}_{Y}\rightarrow0$ with $I_{Z}$.
We then take $Hom(I_{Z},\cdot)$ to (\ref{SES in X[n]}) and are left to show $Ext^{*}(I_{Z},I_{Z}\otimes\mathcal{O}_{Y})\cong
Ext^{*}_{Y}(I_{Z}\otimes\mathcal{O}_{Y},I_{Z}\otimes\mathcal{O}_{Y})$. We have a spectral sequence
\begin{equation}H^{*}(X_{+}[n]_{0},\mathcal{E}xt^{*}(I_{Z},I_{Z}\otimes\mathcal{O}_{Y}))\Rightarrow Ext^{*}(I_{Z},I_{Z}\otimes\mathcal{O}_{Y}).
\nonumber \end{equation}
By Corollary 2.9 \cite{wu}, we can take a finite length locally free resolution $E^{\bullet}\rightarrow I_{Z}\rightarrow0$. Then
\begin{equation}H^{*}(X_{+}[n]_{0},\mathcal{E}xt^{*}(I_{Z},I_{Z}\otimes\mathcal{O}_{Y}))\cong H^{*}(X_{+}[n]_{0},\mathcal{E}xt^{*}
(\mathcal{O}_{X_{+}[n]_{0}},\mathcal{O}_{Y})\otimes End(E^{\bullet})) \nonumber \end{equation}
\begin{equation}\cong H^{*}(X_{+}[n]_{0},End(E^{\bullet})\otimes\mathcal{O}_{Y})\cong H^{*}(X_{+}[n]_{0},\iota_{*}End(E^{\bullet}|_{Y}))\cong
Ext^{*}_{Y}(E^{\bullet}|_{Y},E^{\bullet}|_{Y}), \nonumber \end{equation}
where $\iota: Y\hookrightarrow X_{+}[n]_{0}$ is the closed imbedding.
By (\ref{tor vanishi0}), (\ref{tor vanishi}), we have $\mathcal{T}or_{\mathcal{O}_{X_{+}[n]_{0}}}^{i\geq1}(I_{Z},\mathcal{O}_{Y})=0$
which implies that $E^{\bullet}|_{Y}\rightarrow I_{Z}|_{Y}\rightarrow0$ is still
an resolution. Thus $Ext^{*}_{Y}(I_{Z}|_{Y},I_{Z}|_{Y})\cong Ext^{*}(I_{Z},I_{Z}\otimes\mathcal{O}_{Y})$.
\end{proof}
The above long exact sequence is the ideal sheaf version of the long exact sequence in Lemma \ref{def-obs LES}.
We consider the following extension of virtual cycles for bundles (\textbf{Cases I-III}) to ideal sheaf cases.
\begin{definition}\label{rel DT4 vc for ideal sheaves}
Let $Y$ be an anti-canonical divisor of a complex projective 4-fold $X_{+}$, and \begin{equation}r:\mathfrak{I}^{\delta_{+},\delta_{0}}_{\mathfrak{X}_{+}/\mathfrak{A}_{\diamond}}\rightarrow Hilb^{\delta_{0}}_{Y}
\nonumber \end{equation}
be Li-Wu's restriction morphism in Lemma \ref{lemma on decomposition}. We assume $\mathfrak{I}^{\delta_{+},\delta_{0}}_{\mathfrak{X}_{+}/\mathfrak{A}_{\diamond}}$ is a smooth moduli scheme (all Kuranishi maps vanish).

The relative $DT_{4}$ virtual cycle for $\mathfrak{I}^{\delta_{+},\delta_{0}}_{\mathfrak{X}_{+}/\mathfrak{A}_{\diamond}}$ is its usual fundamental class provided that any one of the following conditions holds, \\
(1) $r$ is surjective between smooth moduli spaces, and the obstruction bundle $Ob(\mathfrak{I}^{\delta_{+},\delta_{0}}_{\mathfrak{X}_{+}/\mathfrak{A}_{\diamond}})=0$, \\
(2) $r$ is injective between smooth moduli spaces (at least when restricted to a neighbourhood $U(r(\mathfrak{I}^{\delta_{+},\delta_{0}}_{\mathfrak{X}_{+}/\mathfrak{A}_{\diamond}}))$ of
$r(\mathfrak{I}^{\delta_{+},\delta_{0}}_{\mathfrak{X}_{+}/\mathfrak{A}_{\diamond}})$ in $Hilb^{\delta_{0}}_{Y}$), and $rk(Ob(\mathfrak{I}^{\delta_{+},\delta_{0}}_{\mathfrak{X}_{+}/\mathfrak{A}_{\diamond}}))=
codim(\mathfrak{I}^{\delta_{+},\delta_{0}}_{\mathfrak{X}_{+}/\mathfrak{A}_{\diamond}},
U(r(\mathfrak{I}^{\delta_{+},\delta_{0}}_{\mathfrak{X}_{+}/\mathfrak{A}_{\diamond}})))$.
\end{definition}
${}$ \\
\textbf{A modification by twisting $K_{X_{\pm}}^{\frac{1}{2}}$}. In general, extensions of virtual cycles for bundles (\textbf{Cases I-III}) to ideal sheaves are not straightforward. We study the extension for \textbf{Case I}.
\begin{proposition}\label{self dual obs}
We take a smooth Calabi-Yau 3-fold $Y$ in complex projective 4-folds $X_{\pm}$ as their anti-canonical divisors.
We assume any $I_{C}\in Hilb^{\delta_{0}}(Y)$ satisfies $Ext^{1}(I_{C},I_{C})=0$. Then for any closed point of
$\mathfrak{I}^{\delta_{\pm},\delta_{0}}_{\mathfrak{X}_{\pm}/\mathfrak{A}_{\diamond}}$, say $[I_{Z_{\pm}}]$ with $Z_{\pm}\subseteq X_{\pm}[n]_{0}$,
we have canonical isomorphisms
\begin{equation}\label{(-2)-stru}Ext^{i}_{X_{\pm}[n]_{0}}(I_{Z_{\pm}},I_{Z_{\pm}})_{0}\cong Ext^{4-i}_{X_{\pm}[n]_{0}}(I_{Z_{\pm}},I_{Z_{\pm}})^{*}_{0},
\textrm{ } i=1,2.
 \end{equation}
Furthermore, under the isomorphism in Lemma \ref{lemma on decomposition},
\begin{equation}\mathfrak{I}^{\delta}_{\mathfrak{X}_{0}^{\dag}/\mathfrak{C}_{0}^{\dag}}\cong\mathfrak{I}^{\delta_{-},\delta_{0}}_
{\mathfrak{X}_{-}/\mathfrak{A}_{\diamond}}\times_{Hilb^{\delta_{0}}_{Y}} \mathfrak{I}^{\delta_{+},\delta_{0}}_{\mathfrak{X}_{+}/\mathfrak{A}_{\diamond}},
\nonumber \end{equation}
where a closed point of $\mathfrak{I}^{\delta}_{\mathfrak{X}_{0}^{\dag}/\mathfrak{C}_{0}^{\dag}}$ is written as $I_{Z}=I_{Z_{+}}\cup I_{Z_{-}}$,
with $Z\subseteq X[n_{+}+n_{-}]_{0}$, $Z_{\pm}\subseteq X_{\pm}[n_{\pm}]_{0}$,
we have canonical isomorphisms of trace-free extension groups
\begin{equation}Ext^{*}_{X[n_{+}+n_{-}]_{0}}(I_{Z},I_{Z})_{0}\cong Ext^{*}_{X_{+}[n_{+}]_{0}}(I_{Z_{+}},I_{Z_{+}})_{0}
\oplus Ext^{*}_{X_{-}[n_{-}]_{0}}(I_{Z_{-}},I_{Z_{-}})_{0}, \nonumber \end{equation}
under which the non-degenerate quadratic forms on $Ext^{2}_{X[n_{+}+n_{-}]_{0}}(I_{Z},I_{Z})_{0}$ and
$Ext^{2}_{X_{\pm}[n]_{0}}(I_{Z_{\pm}},I_{Z_{\pm}})$ (\ref{(-2)-stru}) are preserved.
\end{proposition}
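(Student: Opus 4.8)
The plan is to extract both assertions from the long exact sequence of Lemma \ref{fund exact seq for dege }, combined with Serre duality applied first on the Calabi--Yau 3-fold $Y$ and then on the Gorenstein expanded total space. Throughout I would work with trace-free groups from the outset, so that the trace summand $H^{*}(\mathcal{O})$, which is itself Serre self-dual, never interferes with the pairings.

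First I would prove the self-duality (\ref{(-2)-stru}) on a single piece $X_{+}[n]_{0}$. The Case I hypothesis $Ext^{1}_{Y}(I_{C},I_{C})=0$, together with Serre duality on the Calabi--Yau 3-fold $Y$ (giving $Ext^{2}_{Y}(I_{C},I_{C})_{0}\cong Ext^{1}_{Y}(I_{C},I_{C})_{0}^{*}$), forces $Ext^{2}_{Y}(I_{C},I_{C})_{0}=0$; simplicity of $I_{C}$ gives $Ext^{0}_{Y}(I_{C},I_{C})_{0}=0$ and hence $Ext^{3}_{Y}(I_{C},I_{C})_{0}=0$ as well, so every trace-free $Y$-group vanishes. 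Feeding these vanishings into the trace-free version of the long exact sequence of Lemma \ref{fund exact seq for dege } collapses it into isomorphisms
\[
Ext^{i}_{X_{+}[n]_{0}}(I_{Z_{+}},I_{Z_{+}})_{0}\cong Ext^{i}_{X_{+}[n]_{0}}\big(I_{Z_{+}},I_{Z_{+}}\otimes\mathcal{O}_{X_{+}[n]_{0}}(-Y[n]_{0})\big)_{0},\qquad i=1,2.
\]
The geometric input is that $Y$ is anti-canonical, so the expanded total space carries a log-Calabi--Yau structure: one checks component by component along the chain $X_{+}\cup\Delta_{1}\cup\cdots\cup\Delta_{n}$ that the dualizing sheaf is $\omega_{X_{+}[n]_{0}}\cong\mathcal{O}_{X_{+}[n]_{0}}(-Y[n]_{0})$ (on $X_{+}$ one has $\omega_{X_{+}}(Y)\cong\mathcal{O}$, the ruled components $\Delta_{i}$ contribute trivially, and the only nontrivial twist sits at the final section). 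Applying Serre duality for the Gorenstein projective $4$-fold $X_{+}[n]_{0}$ gives $Ext^{i}(I_{Z_{+}},I_{Z_{+}}\otimes\omega)\cong Ext^{4-i}(I_{Z_{+}},I_{Z_{+}})^{*}$, and composing the two isomorphisms yields (\ref{(-2)-stru}).

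For the decomposition statement I would run the Mayer--Vietoris argument attached to the gluing $X[n_{+}+n_{-}]_{0}=X_{+}[n_{+}]_{0}\cup_{Y}X_{-}[n_{-}]_{0}$; write $X_{0}=X[n_{+}+n_{-}]_{0}$ for brevity. Applying $R\mathrm{Hom}(I_{Z},I_{Z}\otimes(-))$ to the exact sequence $0\to\mathcal{O}_{X_{0}}\to\mathcal{O}_{X_{+}[n_{+}]_{0}}\oplus\mathcal{O}_{X_{-}[n_{-}]_{0}}\to\mathcal{O}_{Y}\to0$, and using that $I_{Z}$ restricts to $I_{Z_{\pm}}$ on the two pieces and to the common $I_{C}$ on $Y$ (the normality of $Z$ to $Y$ guaranteeing, as in Lemma \ref{fund exact seq for dege }, that all restrictions remain resolutions), produces a long exact sequence
\[
\cdots\to Ext^{*}_{X_{0}}(I_{Z},I_{Z})_{0}\to Ext^{*}_{X_{+}[n_{+}]_{0}}(I_{Z_{+}},I_{Z_{+}})_{0}\oplus Ext^{*}_{X_{-}[n_{-}]_{0}}(I_{Z_{-}},I_{Z_{-}})_{0}\to Ext^{*}_{Y}(I_{C},I_{C})_{0}\to\cdots.
\]
Since every trace-free group $Ext^{*}_{Y}(I_{C},I_{C})_{0}$ vanishes (degrees $1,2$ by hypothesis and Serre duality, degrees $0,3$ by simplicity), the sequence breaks into the asserted direct-sum isomorphism under the identification of Lemma \ref{lemma on decomposition}. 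The compatibility of the non-degenerate quadratic forms then follows because the Serre pairing on $X_{0}$ is assembled from $\omega_{X_{0}}$, which restricts to $\omega_{X_{\pm}[n_{\pm}]_{0}}$ on the two components, so the Mayer--Vietoris isomorphism intertwines the pairing on the total space with the orthogonal direct sum of the pairings on the pieces, the potential cross-terms being supported on $Y$ where the trace-free groups are zero.

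I expect the main obstacle to be the geometric bookkeeping on the reducible expanded spaces: identifying $\omega_{X_{\pm}[n]_{0}}$ with $\mathcal{O}(-Y[n]_{0})$ and verifying that Serre duality is compatible both with restriction along the chain $X_{\pm}\cup\Delta_{1}\cup\cdots\cup\Delta_{n}$ and with the gluing across the central divisor $Y$. The homological algebra---collapsing the long exact sequence of Lemma \ref{fund exact seq for dege } and the Mayer--Vietoris sequence once the $Y$-groups are seen to vanish---is routine, but the careful handling of dualizing sheaves on the simple-normal-crossing degeneration, together with tracking the Serre quadratic forms through the gluing, is where the genuine content lies.
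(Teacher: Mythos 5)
Your proposal is correct and follows essentially the same route as the paper: the paper likewise obtains (\ref{(-2)-stru}) by applying the trace-free version of Lemma \ref{fund exact seq for dege } with $\mathcal{O}(-Y[n]_{0})$ identified as the dualizing sheaf of $X_{\pm}[n]_{0}$ and then invoking Serre duality, and it proves the splitting via the same Mayer--Vietoris exact triangle (cited from \cite{mpt}, \cite{liwu}) together with the vanishing of the trace-free $Ext^{*}_{Y}(I_{Z}|_{Y},I_{Z}|_{Y})_{0}$ forced by the hypothesis, Serre duality on the Calabi--Yau $3$-fold, and simplicity. Your write-up simply makes explicit the bookkeeping (the identification $\omega_{X_{\pm}[n]_{0}}\cong\mathcal{O}(-Y[n]_{0})$ and the compatibility of the Serre pairings across the gluing) that the paper leaves implicit.
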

\begin{proof}
We apply the trace-free version of Lemma \ref{fund exact seq for dege } to the case when $Y$ is the Cartier divisor
associated with the dualizing sheaf of $X_{\pm}[n]_{0}$ and get canonical isomorphisms by Serre duality.

We take a closed point $I_{Z}\in \mathfrak{I}^{\delta}_{\mathfrak{X}_{0}^{\dag}/\mathfrak{C}_{0}^{\dag}}$ with
$Z\subseteq X[n_{+}+n_{-}]_{0}=X_{+}[n_{+}]_{0}\cup_{Y} X_{-}[n_{-}]_{0}$ and restrict to get $I_{Z_{\pm}}\subseteq \mathcal{O}_{X_{\pm}[n_{\pm}]}$.
As \cite{mpt}, \cite{liwu}, we then get an exact triangle
\begin{equation}RHom_{X[n_{+}+n_{-}]_{0}}(I_{Z},I_{Z})_{0}\rightarrow \bigoplus
RHom_{X_{\pm}[n_{\pm}]_{0}}(I_{Z}|_{X_{\pm}[n_{\pm}]_{0}},I_{Z}|_{X_{\pm}[n_{\pm}]_{0}})_{0}\rightarrow
RHom_{Y}(I_{Z}|_{Y},I_{Z}|_{Y})_{0}.   \nonumber \end{equation}
We take cohomology, use the assumption to deduce $H^{*}(RHom_{Y}(I_{Z}|_{Y},I_{Z}|_{Y})_{0})=0$.
\end{proof}
From Proposition \ref{self dual obs}, one may expect to get a
$(-2)$-shifted symplectic structure on $\mathfrak{I}^{\delta_{\pm},\delta_{0}}_{\mathfrak{X}_{\pm}/\mathfrak{A}_{\diamond}}$
if any $I_{C}\in Hilb^{\delta_{0}}(Y)$ satisfies $Ext^{1}(I_{C},I_{C})=0$. Then as Borisov-Joyce did in \cite{bj},
one expects to use BBJ's type local Darboux charts, partition of unity and homotopical algebra to obtain
D-orbifolds associated with $\mathfrak{I}^{\delta_{\pm},\delta_{0}}_{\mathfrak{X}_{\pm}/\mathfrak{A}_{\diamond}}$ and
$\mathfrak{I}^{\delta}_{\mathfrak{X}_{0}^{\dag}/\mathfrak{C}_{0}^{\dag}}$.
In particular, analogs to Theorem \ref{DT4 main thm}, there should exist homology classes
$[\mathfrak{I}^{\delta_{\pm},\delta_{0}}_{\mathfrak{X}_{\pm}/\mathfrak{A}_{\diamond}}]^{vir}\in
H_{*}(\mathfrak{I}^{\delta_{\pm},\delta_{0}}_{\mathfrak{X}_{\pm}/\mathfrak{A}_{\diamond}},\mathbb{Q})$,
$[\mathfrak{I}^{\delta}_{\mathfrak{X}_{0}^{\dag}/\mathfrak{C}_{0}^{\dag}}]^{vir}\in
H_{*}(\mathfrak{I}^{\delta}_{\mathfrak{X}_{0}^{\dag}/\mathfrak{C}_{0}^{\dag}},\mathbb{Q})$ if the associated D-orbifolds of
$\mathfrak{I}^{\delta_{\pm},\delta_{0}}_{\mathfrak{X}_{\pm}/\mathfrak{A}_{\diamond}}$ and $\mathfrak{I}^{\delta}_{\mathfrak{X}_{0}^{\dag}/
\mathfrak{C}_{0}^{\dag}}$ are orientable \cite{joyce1}.

However, under the assumption: any $I_{C}\in Hilb^{\delta_{0}}(Y)$ satisfies $Ext^{1}(I_{C},I_{C})=0$,
the non-degenerate quadratic forms on $Ext^{2}_{X_{\pm}[n]_{0}}(I_{Z_{\pm}},I_{Z_{\pm}})$'s (see Proposition \ref{self dual obs})
don't have to glue together, and the D-orbifold associated with $\mathfrak{I}^{\delta_{\pm},\delta_{0}}_{\mathfrak{X}_{\pm}/\mathfrak{A}_{\diamond}}$
might not exist (see Example \ref{ideal sheaves of one pt}). For gluing, we discuss the case when there exist a square root $K_{X_{\pm}}^{\frac{1}{2}}$
of $K_{X_{\pm}}$ with $K_{X_{\pm}}^{\frac{1}{2}}\otimes K_{X_{\pm}}^{\frac{1}{2}}\cong K_{X_{\pm}}$.
\begin{lemma}\label{new obs space}
Let $X_{+}$ be a complex projective 4-fold with a square root $K_{X_{+}}^{\frac{1}{2}}$, $Y_{i}$ ($i=1,2$)
be two smooth zero loci of sections of $K_{X_{+}}^{-\frac{1}{2}}$ with $K_{Y_{i}}=0$
($\Leftrightarrow \mathcal{N}_{Y_{i}/X_{+}}\cong\mathcal{O}_{Y_{i}}$) and $Y_{1}\cap Y_{2}=\emptyset$. We take $Y=Y_{1}\sqcup Y_{2}$
which a smooth anti-canonical divisor of $X_{+}$. Then for any closed point
$[I_{Z}]\in \mathfrak{I}^{\delta_{+},\delta_{0}}_{\mathfrak{X}_{+}/\mathfrak{A}_{\diamond}}$ with $Z\subseteq X_{+}[n]_{0}$ and
$Y[n]_{0}=Y_{1}\sqcup Y_{2}$, we have a short exact sequence
\begin{equation} 0\rightarrow I_{Z}\otimes K_{X_{+}[n]_{0}}^{\frac{1}{2}}\rightarrow I_{Z}\rightarrow I_{Z}\otimes\mathcal{O}_{Y_{1}}\rightarrow 0
\nonumber \end{equation}
and a long exact sequence
\begin{equation}0\rightarrow Ext^{1}_{X_{+}[n]_{0}}(I_{Z},I_{Z}\otimes K_{X_{+}[n]_{0}}^{\frac{1}{2}})_{0}\rightarrow
Ext^{1}_{X_{+}[n]_{0}}(I_{Z},I_{Z})_{0}\rightarrow Ext^{1}_{Y_{1}}(I_{Z}\otimes\mathcal{O}_{Y_{1}},I_{Z}\otimes\mathcal{O}_{Y_{1}})_{0}
\rightarrow \nonumber \end{equation}
\begin{equation}\rightarrow Ext^{2}_{X_{+}[n]_{0}}(I_{Z},I_{Z}\otimes K_{X_{+}[n]_{0}}^{\frac{1}{2}})_{0}\rightarrow
Ext^{2}_{X_{+}[n]_{0}}(I_{Z},I_{Z})_{0}\rightarrow Ext^{2}_{Y_{1}}(I_{Z}\otimes\mathcal{O}_{Y_{1}},I_{Z}\otimes\mathcal{O}_{Y_{1}})_{0} \rightarrow\cdot\cdot\cdot,
\nonumber \end{equation} 
where $K_{X_{+}[n]_{0}}^{\frac{1}{2}}$ is a square root of the dualizing sheaf of $X_{+}[n]_{0}$.

Furthermore, if any $I_{C}\in Hilb^{\delta_{0}}(Y)$ satisfies $Ext^{1}_{Y}(I_{C},I_{C})=0$, we have canonical isomorphisms
\begin{equation}\phi_{1}: Ext^{2}_{X_{+}[n]_{0}}(I_{Z},I_{Z}\otimes K_{X_{+}[n]_{0}})_{0}\cong Ext^{2}_{X_{+}[n]_{0}}(I_{Z},I_{Z}
\otimes K_{X_{+}[n]_{0}}^{\frac{1}{2}})_{0},
\nonumber \end{equation}
\begin{equation}\phi_{2}: Ext^{2}_{X_{+}[n]_{0}}(I_{Z},I_{Z}\otimes K_{X_{+}[n]_{0}}^{\frac{1}{2}})_{0}\cong Ext^{2}_{X_{+}[n]_{0}}(I_{Z},I_{Z})_{0},
\nonumber \end{equation}
\end{lemma}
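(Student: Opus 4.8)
The plan is to treat the two assertions in turn: the short and long exact sequences are a variant of Lemma \ref{fund exact seq for dege } in which the full relative divisor $Y$ is replaced by a single ``half-anticanonical'' component $Y_1$, and the two isomorphisms $\phi_1,\phi_2$ are then extracted from these sequences by peeling off one component $Y_i$ at a time and invoking Serre duality on the Calabi-Yau threefolds $Y_1,Y_2$.

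First I would produce the short exact sequence. The only genuinely new input beyond Lemma \ref{fund exact seq for dege } is the identification $\mathcal{O}_{X_{+}[n]_{0}}(-Y_1)\cong K_{X_{+}[n]_{0}}^{\frac{1}{2}}$. On $X_+$ itself this is immediate: since $Y_1$ is the zero locus of a section of $K_{X_{+}}^{-\frac{1}{2}}$ one has $\mathcal{O}_{X_{+}}(Y_1)\cong K_{X_{+}}^{-\frac{1}{2}}$, and as $Y_1,Y_2\in|K_{X_{+}}^{-\frac{1}{2}}|$ are linearly equivalent, $\mathcal{O}_{X_{+}}(-Y_1)$ is a genuine square root of $\omega_{X_{+}}=\mathcal{O}_{X_{+}}(-Y_1-Y_2)$. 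I would then propagate this across the connected, reduced, simple-normal-crossing chain $X_{+}[n]_{0}=X_{+}\cup\Delta_{1}\cup\cdots\cup\Delta_{n}$: the log-Calabi-Yau condition is preserved under Li-Wu's expansion, so $\omega_{X_{+}[n]_{0}}\cong\mathcal{O}_{X_{+}[n]_{0}}(-Y[n]_{0})=\mathcal{O}(-Y_1-Y_2)$, and the distinguished section extends over the $\mathbb{P}^1$-bundle components $\Delta_i$ to exhibit $K_{X_{+}[n]_{0}}^{\frac{1}{2}}:=\mathcal{O}_{X_{+}[n]_{0}}(-Y_1)$ as a square root of the dualizing sheaf. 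Granting this, tensoring $0\to\mathcal{O}_{X_{+}[n]_{0}}(-Y_1)\to\mathcal{O}_{X_{+}[n]_{0}}\to\mathcal{O}_{Y_1}\to0$ with $I_Z$ stays exact because $Z$ is normal to every $Y_i$, i.e. $\mathcal{T}or^{\geq1}(\mathcal{O}_Z,\mathcal{O}_{Y_1})=0$ by Li-Wu's condition, which gives the stated short exact sequence.

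For the long exact sequence I would apply $Hom(I_Z,-)$ and reduce the boundary terms to $Ext$ groups on $Y_1$ exactly as in Lemma \ref{fund exact seq for dege }: a finite locally free resolution $E^{\bullet}\to I_Z$ (Corollary 2.9 of \cite{wu}) restricts to a resolution of $I_Z|_{Y_1}$ by the Tor-vanishing, and the adjunction spectral sequence collapses to $Ext^{*}(I_Z,I_Z\otimes\mathcal{O}_{Y_1})\cong Ext^{*}_{Y_1}(I_Z|_{Y_1},I_Z|_{Y_1})$; passing to trace-free summands yields the displayed sequence. To obtain $\phi_2$ I then read off the four-term piece $Ext^{1}_{Y_1}(\cdot)_0\to Ext^{2}_{X_{+}[n]_{0}}(I_Z,I_Z\otimes K^{\frac{1}{2}})_0\to Ext^{2}_{X_{+}[n]_{0}}(I_Z,I_Z)_0\to Ext^{2}_{Y_1}(\cdot)_0$; since $Y_1$ is Calabi-Yau, the hypothesis $Ext^{1}_{Y_1}(I_Z|_{Y_1},I_Z|_{Y_1})=0$ together with Serre duality forces $Ext^{2}_{Y_1}(I_Z|_{Y_1},I_Z|_{Y_1})=0$ too, so both neighbours vanish and the middle arrow is an isomorphism.

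For $\phi_1$ I would run the analogous argument along the other component $Y_2$: tensoring $0\to\mathcal{O}(-Y_2)\to\mathcal{O}\to\mathcal{O}_{Y_2}\to0$ with $I_Z\otimes K_{X_{+}[n]_{0}}^{\frac{1}{2}}$ gives $0\to I_Z\otimes K_{X_{+}[n]_{0}}\to I_Z\otimes K_{X_{+}[n]_{0}}^{\frac{1}{2}}\to (I_Z\otimes K_{X_{+}[n]_{0}}^{\frac{1}{2}})|_{Y_2}\to0$, and applying $Hom(I_Z,-)$ produces boundary groups $Ext^{*}_{Y_2}(I_Z|_{Y_2},I_Z|_{Y_2}\otimes K^{\frac{1}{2}}|_{Y_2})$. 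Because $\mathcal{N}_{Y_2/X_{+}[n]_{0}}\cong\mathcal{O}_{Y_2}$ one has $K^{\frac{1}{2}}|_{Y_2}\cong\mathcal{O}_{Y_2}$, so these reduce to $Ext^{*}_{Y_2}(I_Z|_{Y_2},I_Z|_{Y_2})$, which vanish in degrees $1$ and $2$ as before; hence $\phi_1$ is an isomorphism. The hard part is the opening step, namely rigorously identifying the dualizing sheaf of the reducible variety $X_{+}[n]_{0}$ and verifying that the chosen square root $\mathcal{O}(-Y_1)$ of it genuinely extends, across all bubble components, the square root fixed on $X_+$; once this is in place everything else is a formal consequence of the long exact sequence and Serre duality on $Y_1,Y_2$.
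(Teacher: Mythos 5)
Your proposal is correct and follows essentially the same strategy as the paper's (very terse) proof: the short and long exact sequences come from running the argument of Lemma \ref{fund exact seq for dege } with $\mathcal{O}_{X_{+}[n]_{0}}(-Y)$ replaced by $\mathcal{O}_{X_{+}[n]_{0}}(-Y_{1})\cong K_{X_{+}[n]_{0}}^{1/2}$, and $\phi_{1},\phi_{2}$ drop out of the resulting four-term pieces once the hypothesis $Ext^{1}_{Y}(I_{C},I_{C})=0$ and Serre duality on the Calabi--Yau threefolds kill the boundary terms. You also correctly isolate the only point needing real care, namely that $\mathcal{O}(-Y_{1})$ is a square root of the dualizing sheaf of the whole expanded chain, which the paper passes over in silence. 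The one place you diverge is $\phi_{1}$: the paper obtains it by tensoring the $Y_{1}$-restriction sequence with $K_{X_{+}[n]_{0}}^{1/2}$, so its $\phi_{1}$ is multiplication by the section cutting out $Y_{1}$, whereas yours comes from the $Y_{2}$-restriction sequence and is multiplication by the section cutting out $Y_{2}$. Both are isomorphisms by the same vanishing, but your choice is in fact the one compatible with the commutative triangle in Proposition \ref{comparison of quad form}, where the composite $\phi_{2}\circ\phi_{1}$ must equal $\phi_{3}$, i.e.\ multiplication by the section of $K^{-1}_{X_{+}[n]_{0}}$ vanishing on $Y_{1}\sqcup Y_{2}$ rather than on $2Y_{1}$; so your variant is, if anything, the more coherent reading.
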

\begin{proof}
It is similar to the proof of Lemma \ref{fund exact seq for dege }. The isomorphism $\phi_{1}$ is derived by tensoring
$0\rightarrow I_{Z}\otimes K_{X_{+}[n]_{0}}^{\frac{1}{2}}\rightarrow I_{Z}\rightarrow I_{Z}\otimes\mathcal{O}_{Y_{1}}\rightarrow 0$
with $K_{X_{+}[n]_{0}}^{\frac{1}{2}}$ and taking the long exact sequence.
\end{proof}
The reason of introducing $Ext^{2}_{X_{+}[n]_{0}}(I_{Z},I_{Z}\otimes K_{X_{+}[n]_{0}}^{\frac{1}{2}})_{0}$ is that
Serre duality pairing defines a natural non-degenerate quadratic form on it.
If $Ext^{2}_{X_{+}[n]_{0}}(I_{Z_{+}},I_{Z_{+}}\otimes K_{X_{+}[n]_{0}}^{\frac{1}{2}})_{0}$'s are glued to be a
sheaf over the moduli space, the Serre duality pairing will probably 'glue'. By Proposition \ref{self dual obs},
there is a non-degenerate quadratic form on $Ext^{2}_{X_{+}[n]_{0}}(I_{Z},I_{Z})_{0}$. We make a comparison between them.
\begin{proposition}\label{comparison of quad form}
Let $X_{+}$ be a complex projective 4-fold with a square root $K_{X_{+}}^{\frac{1}{2}}$, $Y_{i}$ ($i=1,2$) be
two smooth zero loci of sections of $K_{X_{+}}^{-\frac{1}{2}}$ with $K_{Y_{i}}=0$ ($\Leftrightarrow \mathcal{N}_{Y_{i}/X_{+}}\cong\mathcal{O}_{Y_{i}}$)
and $Y_{1}\cap Y_{2}=\emptyset$. We take $Y=Y_{1}\sqcup Y_{2}$ which a smooth anti-canonical divisor of $X_{+}$ and assume any
$I_{C}\in Hilb^{\delta_{0}}(Y)$ satisfies $Ext^{1}_{Y}(I_{C},I_{C})=0$. Then for any closed point
$[I_{Z}]\in \mathfrak{I}^{\delta_{+},\delta_{0}}_{\mathfrak{X}_{+}/\mathfrak{A}_{\diamond}}$ with $Z\subseteq X_{+}[n]_{0}$ and
$Y[n]_{0}=Y_{1}\sqcup Y_{2}$,
we have a commutative diagram
\begin{equation}\xymatrix{
 Ext^{2}_{X_{+}[n]_{0}}(I_{Z},I_{Z}\otimes K_{X_{+}[n]_{0}})_{0} \ar[d]_{\phi_{1}}^{\cong} \ar[dr]^{\phi_{3}}_{\cong}       \\
  Ext^{2}_{X_{+}[n]_{0}}(I_{Z},I_{Z}\otimes K_{X_{+}[n]_{0}}^{\frac{1}{2}})_{0}\ar[r]_{ \quad  \quad  \phi_{2}}^{\quad  \quad  \cong}
  &  Ext^{2}_{X_{+}[n]_{0}}(I_{Z},I_{Z})_{0},}
\nonumber \end{equation}
where $\phi_{1}$, $\phi_{2}$ are defined in Lemma \ref{new obs space} and $\phi_{3}$ is the isomorphism induced from the long exact sequence
in Lemma \ref{fund exact seq for dege }. Furthermore, $\phi_{2}$ is an isometry with respect to the Serre duality pairing on
$Ext^{2}_{X_{+}[n]_{0}}(I_{Z},I_{Z}\otimes K_{X_{+}[n]_{0}}^{\frac{1}{2}})_{0}$ and the quadratic form on $Ext^{2}_{X_{+}[n]_{0}}(I_{Z},I_{Z})_{0}$
defined in Proposition \ref{self dual obs}.
\end{proposition}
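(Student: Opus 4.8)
The plan is to establish the commutativity of the triangle first and then deduce the isometry from it; the second part is where the Serre-duality bookkeeping becomes delicate.

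\emph{Commutativity.} I would exhibit $\phi_1,\phi_2,\phi_3$ as induced by a single chain of canonical line-bundle inclusions. The section $s\in H^0(X_+[n]_0,K^{-1}_{X_+[n]_0})$ cutting out $Y=Y_1\sqcup Y_2$ factors as $s=s_1 s_2$ with $s_i\in H^0(K^{-\frac{1}{2}}_{X_+[n]_0})$ cutting out $Y_i$, so the canonical inclusion $K_{X_+[n]_0}\hookrightarrow\mathcal{O}_{X_+[n]_0}$ of Lemma \ref{fund exact seq for dege } factors as
\[ K_{X_+[n]_0}\xrightarrow{\;\cdot s_2\;}K^{\frac{1}{2}}_{X_+[n]_0}\xrightarrow{\;\cdot s_1\;}\mathcal{O}_{X_+[n]_0}. \]
Tensoring by $I_Z$ and applying $RHom(I_Z,-)_0$ realises $\phi_1$, $\phi_2$, $\phi_3$ as the maps on $Ext^2(\cdot)_0$ induced by $\cdot s_2$, $\cdot s_1$, $\cdot s$ respectively; under the hypothesis $Ext^1_Y(I_C,I_C)=0$ and its Serre dual $Ext^2_Y(I_C,I_C)=0$ on the Calabi--Yau $3$-fold $Y$, the relevant $Y$-terms in the long exact sequences vanish and all three maps are isomorphisms, exactly as in Lemma \ref{new obs space}. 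Then $\phi_2\circ\phi_1=\phi_3$ is immediate from functoriality of the long exact sequences applied to the evident morphism of short exact sequences, i.e. from $(\cdot s_1)\circ(\cdot s_2)=\cdot s$.

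\emph{Isometry.} I would next unwind the two forms. On $U:=Ext^2_{X_+[n]_0}(I_Z,I_Z\otimes K^{\frac{1}{2}}_{X_+[n]_0})_0$ the pairing is the balanced Serre pairing $Q_{SD}(a,b)=\mathrm{tr}(a\cup b)$, where $a\cup b\in Ext^4(I_Z,I_Z\otimes K)$ precisely because $K^{\frac{1}{2}}\otimes K^{\frac{1}{2}}\cong K$ is the dualizing sheaf, followed by the trace to $H^4(K)\cong\mathbb{C}$; this is symmetric and choice-free. The form $Q$ on $W:=Ext^2_{X_+[n]_0}(I_Z,I_Z)_0$ of Proposition \ref{self dual obs} is the Serre pairing between $W$ and $Ext^2(I_Z,I_Z\otimes K)_0$ pushed into a self-pairing through $\phi_3$, namely $Q(x,y)=\mathrm{tr}(x\cup\phi_3^{-1}(y))$. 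Putting $x=\phi_2 a$, $y=\phi_2 b$ and using $\phi_3^{-1}\circ\phi_2=\phi_1^{-1}$ from the triangle gives $Q(\phi_2 a,\phi_2 b)=\mathrm{tr}(\phi_2 a\cup\phi_1^{-1}(b))$. Since $\phi_2=(\cdot s_1)_*$ and $\phi_1=(\cdot s_2)_*$, and multiplication by a global section is central for the Yoneda product and commutes with $\mathrm{tr}$, I would move the section factors through the product so that, formally, they recombine into $a\cup b$, yielding $Q(\phi_2 a,\phi_2 b)=\mathrm{tr}(a\cup b)=Q_{SD}(a,b)$.

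The main obstacle is exactly this last transfer step, i.e. the compatibility of the Serre-duality trace with the square-root twist. Moving $s_1$ off $\phi_2 a$ and recombining it with $\phi_1^{-1}(b)$ naively produces a correction proportional to $s_1-s_2\in H^0(K^{-\frac{1}{2}})$ cupped into a class $\mathrm{tr}(a\cup\phi_1^{-1}(b))\in H^4(X_+[n]_0,K^{\frac{3}{2}})$, and the entire statement rests on showing that this correction dies on the trace-free groups. I would isolate this as a lemma on the behaviour of the reduced trace pairing under the halving from $K$ to $K^{\frac{1}{2}}$, proving it from the fact that $a,b$ are trace-free, that $\mathrm{tr}$ is a morphism of $H^\ast(\mathcal{O})$-modules, and the Gorenstein/Calabi--Yau geometry of $X_+[n]_0$ which makes $H^4(K)\to\mathbb{C}$ the genuine Serre functional. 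As the decisive test I would run the computation on the split model of Example \ref{ex rel DT4/DT3}, where $I_Z=\iota_\ast\mathcal{F}$ is supported away from $Y_1\cup Y_2$ and the maps $\cdot s_i$ degenerate to the fibre scalars $s_i(t)$; this is where I expect the real difficulty, and where any required normalization of the square root entering through (\ref{max iso}) would have to be pinned down for $\phi_2$ to be a genuine isometry.
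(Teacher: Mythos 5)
Your treatment of the commutative triangle coincides with the paper's: both of you realise $\phi_{1},\phi_{2},\phi_{3}$ as the maps induced by multiplication by $s_{2}$, $s_{1}$ and $s=s_{1}s_{2}$ and conclude by functoriality. One caveat should be made explicit, since it is invisible in the paper as well: as literally written, Lemma \ref{new obs space} produces \emph{both} $\phi_{1}$ and $\phi_{2}$ from the single short exact sequence attached to $Y_{1}$, so both would be multiplication by $s_{1}$ and their composite would correspond to the divisor $2Y_{1}$ rather than to $Y=Y_{1}+Y_{2}$; the triangle only closes under your (and the paper's intended) reading in which one of the two maps is induced by the section cutting out $Y_{2}$. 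Stating that choice is part of a complete proof, but it is not a gap in your argument.

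The isometry half is where the genuine gap lies, and you have located it exactly without closing it. Your reduction is correct: $\phi_{3}^{-1}\circ\phi_{2}=\phi_{1}^{-1}$ and centrality of multiplication by global sections give $Q(\phi_{2}a,\phi_{2}b)-Q_{SD}(a,b)=\mathrm{tr}\bigl((s_{1}-s_{2})\cdot(a\cup\phi_{1}^{-1}(b))\bigr)$, the Serre pairing of the class $\mathrm{tr}(a\cup\phi_{1}^{-1}(b))\in H^{4}(X_{+}[n]_{0},K_{X_{+}[n]_{0}}^{\frac{3}{2}})$ against $s_{1}-s_{2}\in H^{0}(K_{X_{+}[n]_{0}}^{-\frac{1}{2}})$. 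But the vanishing of this term \emph{is} the statement to be proved, and the justification you sketch (trace-freeness of $a,b$ together with the $H^{*}(\mathcal{O})$-module structure of $\mathrm{tr}$) does not yield it: the Yoneda product of two trace-free classes need not be trace-free, and the resulting functional on $H^{0}(K^{-\frac{1}{2}})$ has no a priori reason to annihilate $s_{1}-s_{2}$. Your own proposed test case shows the difficulty is real rather than notational: for $I_{Z}=\iota_{*}\mathcal{F}$ supported on a fibre over $t\in\mathbb{P}^{1}$ away from $Y$ (as in Example \ref{ex rel DT4/DT3}), all twists become one-dimensional, $\phi_{2}$ and $\phi_{1}^{-1}$ act by the fibre scalars $s_{1}(t)$ and $s_{2}(t)^{-1}$, and one finds $Q(\phi_{2}a,\phi_{2}b)=\bigl(s_{1}(t)/s_{2}(t)\bigr)\,Q_{SD}(a,b)$ --- a similitude with a nonconstant ratio, not an isometry, unless some further normalisation of the identifications in Lemma \ref{new obs space} is imposed. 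So the step you defer to an auxiliary lemma is not a routine verification; it is the entire content of the second assertion, and neither your sketch nor the paper's one-line ``it is easy to check'' supplies it. A repair must either exhibit the extra canonical identification that cancels the ratio $s_{1}/s_{2}$, or weaken the conclusion to an isometry up to a scalar, which is what the gluing discussion surrounding Example \ref{ideal sheaves of one pt} actually appears to require.
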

\begin{proof}
The commutativity is because isomorphisms $\phi_{1}$, $\phi_{2}$ are pairings with sections in $H^{0}(X_{+}[n]_{0},K_{X_{+}[n]_{0}}^{-\frac{1}{2}})$
corresponding to Cartier divisors $Y_{1}$, $Y_{2}$, and $\phi_{3}$ is the pairing with a section in $H^{0}(X_{+}[n]_{0},K_{X_{+}[n]_{0}}^{-1})$
corresponding to $Y=Y_{1}\sqcup Y_{2}$. Then it is easy to check $\phi_{2}$ is an isometry.
\end{proof}
%
\begin{conjecture}\label{conj0}
Let $Y$ be a smooth Calabi-Yau 3-fold in complex projective 4-folds $X_{\pm}$ as their anti-canonical divisors.
We assume any $I_{C}\in Hilb^{\delta_{0}}(Y)$ satisfies $Ext^{1}(I_{C},I_{C})=0$. Then there exists a D-orbifold
associated with Deligne-Mumford stack $\mathfrak{I}^{\delta}_{\mathfrak{X}_{0}^{\dag}/\mathfrak{C}_{0}^{\dag}}$, i.e.
they have the same underlying topological structures.

Furthermore, if canonical bundles of $X_{\pm}$ admit square roots $K_{X_{\pm}}^{\frac{1}{2}}$, and there exist $Y_{i}$ ($i=1,2$)
which are smooth zero loci of sections of $K_{X_{\pm}}^{-\frac{1}{2}}$ with $K_{Y_{i}}=0$
($\Leftrightarrow \mathcal{N}_{Y_{i}/X_{\pm}}\cong\mathcal{O}_{Y_{i}}$) such that $Y=Y_{1}\cup Y_{2}$, $Y_{1}\cap Y_{2}=\emptyset$, then there exist D-orbifolds associated with Deligne-Mumford stacks $\mathfrak{I}^{\delta_{\pm},\delta_{0}}_{\mathfrak{X}_{\pm}/\mathfrak{A}_{\diamond}}$.

\end{conjecture}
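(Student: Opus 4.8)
The plan is to reduce both claims to the Borisov-Joyce gluing construction of Theorem \ref{DT4 main thm}, which manufactures a D-orbifold out of a self-dual obstruction theory (a $(-2)$-shifted symplectic structure) by choosing, analytically locally, a maximal positive-definite real subspace $Ext^2_+$ of the degree-two obstruction and gluing the truncated Kuranishi maps $\kappa_+=\pi_+\circ\kappa$ by partition of unity and homotopical algebra. The algebraic input this requires is precisely the self-duality furnished by Proposition \ref{self dual obs}.

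First I would treat the glued stack $\mathfrak{I}^{\delta}_{\mathfrak{X}_{0}^{\dag}/\mathfrak{C}_{0}^{\dag}}$. By Proposition \ref{self dual obs} the trace-free groups satisfy $Ext^{i}(I_{Z},I_{Z})_{0}\cong Ext^{4-i}(I_{Z},I_{Z})^{*}_{0}$ for $i=1,2$, so that $Q_{Serre}$ equips $Ext^{2}(I_{Z},I_{Z})_{0}$ with a non-degenerate quadratic form and the deformation-obstruction complex is self-dual of amplitude $[1,3]$. Assuming the analog of Theorem \ref{B-side local Darboux thm} on the expanded variety $X[n_{+}+n_{-}]_{0}$ --- local Darboux charts presenting a neighbourhood of $[I_{Z}]$ as $\kappa^{-1}(0)$ with $Q_{Serre}(\kappa,\kappa)=0$ --- I would select a half-dimensional positive-definite real subspace of each $Ext^{2}(I_{Z},I_{Z})_{0}$ and glue the $\kappa_+$ exactly as in the smooth Calabi-Yau $4$-fold case. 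The output is a D-orbifold rather than a D-manifold because the finite automorphism groups $Aut_{\mathfrak{X}}(I_{Z})$ make $\mathfrak{I}^{\delta}_{\mathfrak{X}_{0}^{\dag}/\mathfrak{C}_{0}^{\dag}}$ a Deligne-Mumford stack.

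For the second assertion the issue to overcome is the one flagged after Proposition \ref{self dual obs}: on the individual relative stacks $\mathfrak{I}^{\delta_{\pm},\delta_{0}}_{\mathfrak{X}_{\pm}/\mathfrak{A}_{\diamond}}$ the untwisted Serre-duality forms on $Ext^{2}_{X_{\pm}[n]_{0}}(I_{Z_{\pm}},I_{Z_{\pm}})_{0}$ need not glue into a non-degenerate quadratic form over the moduli space (Example \ref{ideal sheaves of one pt}, where $c_{1}(Ob)\neq0$). The remedy is to replace the obstruction by its $K^{\frac{1}{2}}$-twist: by Lemma \ref{new obs space} the group $Ext^{2}_{X_{\pm}[n]_{0}}(I_{Z},I_{Z}\otimes K_{X_{\pm}[n]_{0}}^{\frac{1}{2}})_{0}$ carries a genuine non-degenerate Serre-duality pairing, and since $K_{X_{\pm}}^{\frac{1}{2}}$ is globally defined these twisted groups do assemble into a sheaf over the moduli stack. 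Proposition \ref{comparison of quad form} then supplies the isometry $\phi_{2}$ identifying the twisted quadratic form with the naive one, so the twist changes nothing locally while making the quadratic bundle global; I would feed this globally glued twisted quadratic bundle into the same Borisov-Joyce gluing to produce the D-orbifolds associated with $\mathfrak{I}^{\delta_{\pm},\delta_{0}}_{\mathfrak{X}_{\pm}/\mathfrak{A}_{\diamond}}$.

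The hard part is foundational: establishing a bona fide $(-2)$-shifted symplectic structure, or at least the local Darboux charts of Theorem \ref{B-side local Darboux thm}, on moduli of ideal sheaves over the \emph{singular} expanded varieties $X[n]_{0}$ and $X_{\pm}[n]_{0}$, whereas that theorem and the underlying PTVV and Calaque formalism are developed for smooth Calabi-Yau $4$-folds. One must verify that the dualizing-sheaf substitute for the Calabi-Yau condition, together with the Seidel-Thomas-twist reduction used in the proof of Theorem \ref{B-side local Darboux thm}, survives on these normal-crossing chains, and that the homotopical-algebra gluing of the $\kappa_+$ is compatible both with the node-marking structure of $\mathfrak{C}_{0}^{\dag}$ and with the $K^{\frac{1}{2}}$-twisting. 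This is exactly the gap that keeps the statement a conjecture rather than a theorem.
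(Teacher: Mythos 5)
This statement is a conjecture and the paper offers no proof of it; the surrounding text (the discussion following Proposition \ref{self dual obs}, Lemma \ref{new obs space} and Proposition \ref{comparison of quad form}) gives only the same heuristic outline you give --- self-duality of the trace-free obstruction feeding into the Borisov--Joyce gluing for $\mathfrak{I}^{\delta}_{\mathfrak{X}_{0}^{\dag}/\mathfrak{C}_{0}^{\dag}}$, and the $K_{X_{\pm}}^{\frac{1}{2}}$-twist together with the isometry $\phi_{2}$ to globalize the quadratic form on the relative stacks. Your sketch therefore coincides with the paper's intended strategy, and you correctly identify the foundational gap (no Darboux charts or $(-2)$-shifted symplectic structure is established for moduli over the singular expanded varieties $X[n]_{0}$), which is precisely why the statement remains a conjecture rather than a theorem.
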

\begin{conjecture}\label{conj1}
We take a smooth Calabi-Yau 3-fold $Y$ in complex projective 4-folds $X_{\pm}$ as their anti-canonical divisors.
We assume any $I_{C}\in Hilb^{\delta_{0}}(Y)$ satisfies $Ext^{1}(I_{C},I_{C})=0$ (we assume $Hilb^{\delta_{0}}(Y)$
consists of one point without loss of generality), then we have
\begin{equation}[\mathfrak{I}^{\delta_{\pm},\delta_{0}}_{\mathfrak{X}_{\pm}/\mathfrak{A}_{\diamond}}]^{vir}\in
H_{*}(\mathfrak{I}^{\delta_{\pm},\delta_{0}}_{\mathfrak{X}_{\pm}/\mathfrak{A}_{\diamond}},\mathbb{Q}), \textrm{ }
[\mathfrak{I}^{\delta}_{\mathfrak{X}_{0}^{\dag}/\mathfrak{C}_{0}^{\dag}}]^{vir}\in
H_{*}(\mathfrak{I}^{\delta}_{\mathfrak{X}_{0}^{\dag}/\mathfrak{C}_{0}^{\dag}},\mathbb{Q}) \nonumber \end{equation}
if there exist orientable D-orbifolds \cite{joyce1} associated with
$\mathfrak{I}^{\delta_{\pm},\delta_{0}}_{\mathfrak{X}_{\pm}/\mathfrak{A}_{\diamond}}$ and
$\mathfrak{I}^{\delta}_{\mathfrak{X}_{0}^{\dag}/\mathfrak{C}_{0}^{\dag}}$.
Furthermore, under the isomorphism
\begin{equation}\mathfrak{I}^{\delta_{-},\delta_{0}}_{\mathfrak{X}_{-}/\mathfrak{A}_{\diamond}}\times_{Hilb^{\delta_{0}}_{Y}}
\mathfrak{I}^{\delta_{+},\delta_{0}}_{\mathfrak{X}_{+}/\mathfrak{A}_{\diamond}}
\cong \mathfrak{I}^{\delta}_{\mathfrak{X}_{0}^{\dag}/\mathfrak{C}_{0}^{\dag}} \nonumber \end{equation}
in Lemma \ref{lemma on decomposition}, we have an identification of virtual cycles
\begin{equation}[\mathfrak{I}^{\delta}_{\mathfrak{X}_{0}^{\dag}/\mathfrak{C}_{0}^{\dag}}]^{vir}=
[\mathfrak{I}^{\delta_{+},\delta_{0}}_{\mathfrak{X}_{+}/\mathfrak{A}_{\diamond}}]^{vir}\times
[\mathfrak{I}^{\delta_{-},\delta_{0}}_{\mathfrak{X}_{-}/\mathfrak{A}_{\diamond}}]^{vir}, \nonumber \end{equation}
if we choose appropriate orientations on D-orbifolds associated with
$\mathfrak{I}^{\delta_{\pm},\delta_{0}}_{\mathfrak{X}_{\pm}/\mathfrak{A}_{\diamond}}$ and
$\mathfrak{I}^{\delta}_{\mathfrak{X}_{0}^{\dag}/\mathfrak{C}_{0}^{\dag}}$.
\end{conjecture}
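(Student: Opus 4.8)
The plan is to reduce the gluing identity to the multiplicativity of the self-dual Euler class under the product decomposition supplied by Proposition \ref{self dual obs}, treating the existence of orientable D-orbifolds as granted by the hypotheses and by Conjecture \ref{conj0}. First I would use the assumption $Ext^{1}(I_{C},I_{C})=0$ for all $I_{C}\in Hilb^{\delta_{0}}(Y)$: this makes $Hilb^{\delta_{0}}(Y)$ smooth of dimension zero, hence a reduced point after the stated normalization, so the fiber product of Lemma \ref{lemma on decomposition} becomes an honest product
\begin{equation}\mathfrak{I}^{\delta}_{\mathfrak{X}_{0}^{\dag}/\mathfrak{C}_{0}^{\dag}}\cong \mathfrak{I}^{\delta_{+},\delta_{0}}_{\mathfrak{X}_{+}/\mathfrak{A}_{\diamond}}\times \mathfrak{I}^{\delta_{-},\delta_{0}}_{\mathfrak{X}_{-}/\mathfrak{A}_{\diamond}}, \nonumber \end{equation}
with projections $\pi_{\pm}$ onto the two factors.

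Next I would invoke Proposition \ref{self dual obs} to obtain, at each closed point $I_{Z}=I_{Z_{+}}\cup I_{Z_{-}}$, the splitting $Ext^{*}(I_{Z},I_{Z})_{0}\cong Ext^{*}(I_{Z_{+}},I_{Z_{+}})_{0}\oplus Ext^{*}(I_{Z_{-}},I_{Z_{-}})_{0}$ under which the Serre-duality quadratic form on $Ext^{2}(I_{Z},I_{Z})_{0}$ is the orthogonal direct sum of the two factors' forms. The crucial point is that this splitting is compatible with the deformation theory, so a Kuranishi map for $\mathfrak{I}^{\delta}_{\mathfrak{X}_{0}^{\dag}/\mathfrak{C}_{0}^{\dag}}$ may be taken as the direct sum $\kappa=\pi_{+}^{*}\kappa_{+}\oplus\pi_{-}^{*}\kappa_{-}$ of Kuranishi maps pulled back from the two factors, the absence of cross-terms being the analog of the product computation in Example \ref{ex rel DT4/DT3}. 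Choosing the maximal positive-definite real subspace to respect the orthogonal splitting, i.e. $Ext^{2}_{+}(I_{Z})=Ext^{2}_{+}(I_{Z_{+}})\oplus Ext^{2}_{+}(I_{Z_{-}})$, the projected map $\kappa_{+}$ then factors as $(\pi_{+}^{*}\kappa_{+,+},\pi_{-}^{*}\kappa_{+,-})$, exhibiting the Borisov-Joyce local models of the product as products of the local models of the factors.

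In the case where all moduli schemes are smooth, which is the situation realized in Examples \ref{ideal sheaves of one pt}--\ref{ex rel DT4/DT3}, the virtual cycle is the Poincar\'e dual of the Euler class of the self-dual obstruction bundle as in (\ref{half euler class}), and the compatible choice above gives a splitting $Ob_{+}\cong \pi_{+}^{*}Ob_{+}^{(+)}\oplus\pi_{-}^{*}Ob_{+}^{(-)}$. Multiplicativity of the Euler class under direct sum then yields
\begin{equation}PD\big(e(Ob_{+})\big)=PD\big(\pi_{+}^{*}e(Ob_{+}^{(+)})\cup\pi_{-}^{*}e(Ob_{+}^{(-)})\big)=[\mathfrak{I}^{\delta_{+},\delta_{0}}_{\mathfrak{X}_{+}/\mathfrak{A}_{\diamond}}]^{vir}\times[\mathfrak{I}^{\delta_{-},\delta_{0}}_{\mathfrak{X}_{-}/\mathfrak{A}_{\diamond}}]^{vir}, \nonumber \end{equation}
which is the asserted identity. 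For the general Borisov-Joyce construction I would argue that the gluing data, namely local Darboux charts (Theorem \ref{B-side local Darboux thm}), partitions of unity, and the homotopical-algebra patching, can be chosen as external products of the corresponding data on the two factors, so that the glued class is again a product.

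The main obstacle I anticipate lies in the orientation bookkeeping hidden in the phrase ``appropriate orientations.'' The Borisov-Joyce class depends on a square root of the determinant line bundle of the $(-2)$-shifted symplectic structure, and although Proposition \ref{self dual obs} yields $\mathcal{L}_{\mathfrak{I}^{\delta}}\cong\pi_{+}^{*}\mathcal{L}^{(+)}\otimes\pi_{-}^{*}\mathcal{L}^{(-)}$ at the level of determinant lines, a chosen square root on the product need not split as the external product of square roots on the factors. One must therefore show that orientations on the two factors can be selected so that their external product induces, up to a global sign, the orientation used to define $[\mathfrak{I}^{\delta}_{\mathfrak{X}_{0}^{\dag}/\mathfrak{C}_{0}^{\dag}}]^{vir}$; tracking this sign and verifying that the non-canonical patching choices do not obstruct the product structure at the level of \emph{oriented} D-orbifolds is the delicate step, and is precisely why the statement is posed as a conjecture rather than a theorem.
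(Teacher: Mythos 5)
This statement is posed in the paper as a conjecture, and the paper contains no proof of it: the only support offered is Proposition \ref{self dual obs} (the pointwise splitting of trace-free Ext groups and compatibility of Serre-duality quadratic forms, derived from the exact triangle and the vanishing $H^{*}(RHom_{Y}(I_{Z}|_{Y},I_{Z}|_{Y})_{0})=0$), the heuristic paragraph following it (one ``expects'' to run BBJ-type Darboux charts, partitions of unity and homotopical algebra as in \cite{bj} to produce D-orbifolds and their virtual classes), and explicit verification in the special cases of Examples \ref{ideal sheaves of one pt} and \ref{ex rel DT4/DT3}. Your proposal follows essentially this same intended route — reduce the fiber product of Lemma \ref{lemma on decomposition} to an honest product, feed the splitting of Proposition \ref{self dual obs} into the local models, and conclude by multiplicativity of the self-dual Euler class in the smooth case — so as a reconstruction of the paper's heuristic it is accurate and you correctly locate the orientation bookkeeping as one genuine difficulty.

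However, the proposal does not close the gap that makes this a conjecture rather than a theorem, and two of your intermediate assertions are themselves unproven claims rather than steps. First, Proposition \ref{self dual obs} only splits the linearized data; your claim that a Kuranishi map for $\mathfrak{I}^{\delta}_{\mathfrak{X}_{0}^{\dag}/\mathfrak{C}_{0}^{\dag}}$ ``may be taken as'' $\pi_{+}^{*}\kappa_{+}\oplus\pi_{-}^{*}\kappa_{-}$ with no cross-terms, and likewise that the Darboux charts, partitions of unity and homotopical-algebra patching of \cite{bj} ``can be chosen as external products,'' is precisely the foundational content that neither the paper nor its references establish (the relevant spaces here are Deligne--Mumford stacks over the Artin stacks $\mathfrak{A}_{\diamond}$, not the compact Calabi--Yau moduli spaces to which Theorem \ref{B-side local Darboux thm} and \cite{bj} directly apply, and the ambient $(-2)$-shifted symplectic structure on $\mathfrak{I}^{\delta_{\pm},\delta_{0}}_{\mathfrak{X}_{\pm}/\mathfrak{A}_{\diamond}}$ is itself only expected, per the discussion after Proposition \ref{self dual obs}). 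Second, your reduction to the smooth case via (\ref{half euler class}) only covers situations like the worked examples, not the general statement, and even the existence half of the conjecture rests on Conjecture \ref{conj0} and on the d-orbifold virtual class machinery of \cite{joyce1}, which at the time of the paper is a book in preparation. So the honest conclusion is that your argument reproduces and slightly elaborates the paper's sketch, but every point at which it would become a proof is exactly a point the authors deliberately left conjectural.
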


\subsection{A conjectural gluing formula}
We state a conjectural gluing formula of $DT_{4}$ invariants for a simple degeneration $\mathcal{X}\rightarrow C$ of projective $CY_{4}$'s.
We assume $\omega_{\mathcal{X}/C}=0$ and $X_{0}=X_{+}\cup_{Y}X_{-}$ with $Y$ as an anti-canonical divisor of $X_{\pm}$.

We first consider the virtual cycle of $\mathfrak{I}^{P}_{\mathfrak{X}/\mathfrak{C}}$. Because of the triviality of the relative canonical bundle
$\omega_{\mathcal{X}/C}=0$, 
as Conjecture \ref{conj0}, \ref{conj1}, there should exist a D-orbifold associated with $\mathfrak{I}^{P}_{\mathfrak{X}/\mathfrak{C}}$ and
a Borel-Moore homology class $[\mathfrak{I}^{P}_{\mathfrak{X}/\mathfrak{C}}]^{vir}\in
H_{*}^{BM}(\mathfrak{I}^{P}_{\mathfrak{X}/\mathfrak{C}},\mathbb{Q})$ if the D-orbifold is orientable \cite{joyce1}.

${}$ \\
\textbf{The comparison of $[\mathfrak{I}^{P}_{\mathfrak{X}/\mathfrak{C}}]^{vir}$ and $[\mathfrak{I}^{\delta}_
{\mathfrak{X}_{0}^{\dag}/\mathfrak{C}_{0}^{\dag}}]^{vir}$}.
By Lemma \ref{lemma on decomposition},
$\mathfrak{I}^{\delta}_{\mathfrak{X}_{0}^{\dag}/\mathfrak{C}_{0}^{\dag}}$ is the zero loci of a section $\pi_{P}^{*}s_{\delta}$ of
a complex line bundle $\pi_{P}^{*}L_{\delta}$ on $\mathfrak{I}^{P}_{\mathfrak{X}/\mathfrak{C}}$. Meanwhile, by \cite{liwu},
the obstruction theory of $\mathfrak{I}^{\delta}_{\mathfrak{X}_{0}^{\dag}/\mathfrak{C}_{0}^{\dag}}$ is the pull-back of the obstruction theory of
$\mathfrak{I}^{P}_{\mathfrak{X}/\mathfrak{C}}$, thus we should have
\begin{equation}\label{rel obs theory compatible}[\mathfrak{I}^{\delta}_{\mathfrak{X}_{0}^{\dag}/\mathfrak{C}_{0}^{\dag}}]^{vir}=
c_{1}(\pi_{P}^{*}L_{\delta},\pi_{P}^{*}s_{\delta})\cap [\mathfrak{I}^{P}_{\mathfrak{X}/\mathfrak{C}}]^{vir}, \end{equation}
where
$c_{1}(\pi_{P}^{*}L_{\delta},\pi_{P}^{*}s_{\delta})\in H^{2}(\mathfrak{I}^{P}_{\mathfrak{X}/\mathfrak{C}},\mathfrak{I}^{P}_{\mathfrak{X}/\mathfrak{C}}-
\mathfrak{I}^{\delta}_{\mathfrak{X}_{0}^{\dag}/\mathfrak{C}_{0}^{\dag}})$ is the localized first Chern class (Proposition 19.1.2 \cite{fulton}) and
$c_{1}(\pi_{P}^{*}L_{\delta},\pi_{P}^{*}s_{\delta}) \cap  : H_{*}^{BM}(\mathfrak{I}^{P}_{\mathfrak{X}/\mathfrak{C}})\rightarrow
H_{*-2}(\mathfrak{I}^{\delta}_{\mathfrak{X}_{0}^{\dag}/\mathfrak{C}_{0}^{\dag}})$ is the cap product \cite{iverson}.

Summing over all splitting $\delta$ of $P$, we get
\begin{equation}\label{central fiber equal}
\sum_{\delta\in\Lambda_{P}^{spl}}[\mathfrak{I}^{\delta}_{\mathfrak{X}_{0}^{\dag}/\mathfrak{C}_{0}^{\dag}}]^{vir}
=c_{1}(\bigotimes_{\delta\in\Lambda_{P}^{spl}}\pi_{P}^{*}L_{\delta},\prod_{\delta\in\Lambda_{P}^{spl}}\pi_{P}^{*}s_{\delta})\cap
[\mathfrak{I}^{P}_{\mathfrak{X}/\mathfrak{C}}]^{vir}
\end{equation}
${}$ \\
\textbf{The comparison of $[\mathfrak{I}^{P}_{\mathfrak{X}/\mathfrak{C}}]^{vir}$ and $[\mathfrak{I}^{P}_{X_{t}}]^{vir}$}.
For $t\neq0$, we have $\mathfrak{I}^{P}_{\mathfrak{X}/\mathfrak{C}}\times_{C}t\cong \mathfrak{I}^{P}_{X_{t}}$. The obstruction theory of
$\mathfrak{I}^{P}_{X_{t}}$ is the pull back of the obstruction theory of $\mathfrak{I}^{P}_{\mathfrak{X}/\mathfrak{C}}$. Without loss of
generality, we assume $C=\mathbb{A}^{1}$ and similarly obtain
\begin{equation}\label{generic fiber equal}
[\mathfrak{I}^{P}_{X_{t}}]^{vir}=c_{1}(\mathcal{O}_{\mathfrak{I}^{P}_{\mathfrak{X}/\mathfrak{C}}},\pi_{P}^{*}\pi^{*}t)\cap
[\mathfrak{I}^{P}_{\mathfrak{X}/\mathfrak{C}}]^{vir},  \end{equation}
where $[\mathfrak{I}^{P}_{X_{t}}]^{vir}$ is the $DT_{4}$ virtual cycle mentioned in Theorem \ref{DT4 main thm}.
\begin{conjecture}\label{conj2}
We take a simple degeneration $\mathcal{X}\rightarrow C$ of projective $CY_{4}$'s with $\omega_{\mathcal{X}/C}=0$ such that $X_{0}=X_{+}\cup_{Y}X_{-}$
and $Y$ is an anti-canonical divisor of $X_{\pm}$. Then there exists a D-orbifold associated with $\mathfrak{I}^{P}_{\mathfrak{X}/\mathfrak{C}}$,
and a Borel-Moore homology class
\begin{equation}[\mathfrak{I}^{P}_{\mathfrak{X}/\mathfrak{C}}]^{vir}\in H_{*}^{BM}(\mathfrak{I}^{P}_{\mathfrak{X}/\mathfrak{C}},\mathbb{Q})
\nonumber \end{equation}
if the corresponding D-orbifold is orientable.

Furthermore, equalities (\ref{rel obs theory compatible}), (\ref{generic fiber equal}) hold if we choose appropriate orientations for
D-orbifolds associated with $\mathfrak{I}^{P}_{\mathfrak{X}/\mathfrak{C}}$, $\mathfrak{I}^{\delta}_{\mathfrak{X}_{0}^{\dag}/\mathfrak{C}_{0}^{\dag}}$
and $\mathfrak{I}^{P}_{X_{t}}$.
\end{conjecture}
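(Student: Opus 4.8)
The plan is to proceed in three stages, mirroring the two comparison steps already sketched above: first construct the D-orbifold and its virtual class on $\mathfrak{I}^{P}_{\mathfrak{X}/\mathfrak{C}}$, then establish (\ref{rel obs theory compatible}) and (\ref{generic fiber equal}) by realizing both $\mathfrak{I}^{\delta}_{\mathfrak{X}_{0}^{\dag}/\mathfrak{C}_{0}^{\dag}}$ and $\mathfrak{I}^{P}_{X_{t}}$ as zero loci of sections of line bundles on $\mathfrak{I}^{P}_{\mathfrak{X}/\mathfrak{C}}$ and applying a localized first Chern class argument. For the existence of the D-orbifold, the key input is the hypothesis $\omega_{\mathcal{X}/C}=0$, which makes $\pi:\mathcal{X}\to C$ a relative Calabi-Yau fibration. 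I would use the relative analogue of Theorem 2.13 of \cite{calaque}, together with the induced shifted symplectic structure on the stable locus as in \cite{bj}, \cite{ptvv}, to equip $\mathfrak{I}^{P}_{\mathfrak{X}/\mathfrak{C}}$ with a relative $(-2)$-shifted symplectic structure over $\mathfrak{C}$; the relative trace-free Serre duality on $RHom$ underlying Proposition \ref{self dual obs} supplies the required self-duality of the relative obstruction theory. One then transports this to local Darboux charts in the sense of Brav-Bussi-Joyce \cite{bbj}, chooses half-dimensional real subspaces, and glues via partition of unity and homotopical algebra exactly as Borisov-Joyce do \cite{bj}, obtaining a D-orbifold whose underlying topological space is $\mathfrak{I}^{P}_{\mathfrak{X}/\mathfrak{C}}$. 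Since this stack is proper over $C$ but not over a point, I would record the virtual cycle in Borel-Moore homology $H^{BM}_{*}(\mathfrak{I}^{P}_{\mathfrak{X}/\mathfrak{C}},\mathbb{Q})$; its existence requires orientability of the D-orbifold \cite{joyce1}, which is taken as a hypothesis.

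For (\ref{rel obs theory compatible}), I would invoke Lemma \ref{lemma on decomposition}(2), which exhibits $\mathfrak{I}^{\delta}_{\mathfrak{X}_{0}^{\dag}/\mathfrak{C}_{0}^{\dag}}$ as the zero locus $(\pi_{P}^{*}s_{\delta}=0)$ of a section of the complex line bundle $\pi_{P}^{*}L_{\delta}$, together with the fact from \cite{liwu} that the obstruction theory of the central fibre is the pull-back of that of $\mathfrak{I}^{P}_{\mathfrak{X}/\mathfrak{C}}$. The statement then reduces to a refined Euler-class computation: capping with the localized first Chern class $c_{1}(\pi_{P}^{*}L_{\delta},\pi_{P}^{*}s_{\delta})$ of Proposition 19.1.2 \cite{fulton} should recover the virtual cycle of the section's zero locus. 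I would set this up as a D-orbifold Gysin map for the zero section and check that it agrees with the cap product; summing over $\delta$ and using $\bigotimes_{\delta}\pi_{P}^{*}L_{\delta}\cong\mathcal{O}$ from Lemma \ref{complete int1} then yields (\ref{central fiber equal}).

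The argument for (\ref{generic fiber equal}) is formally identical, with $\pi_{P}^{*}L_{\delta}$ replaced by the trivial bundle $\mathcal{O}_{\mathfrak{I}^{P}_{\mathfrak{X}/\mathfrak{C}}}$ and $s_{\delta}$ by $\pi_{P}^{*}\pi^{*}t$, since $\mathfrak{I}^{P}_{X_{t}}$ is cut out as the fibre over $t\in C=\mathbb{A}^{1}$ and carries the pulled-back obstruction theory, so the same localized Chern-class cap product applies. In both cases the formal skeleton is the standard statement that the virtual class of the zero locus of a section equals the refined Euler class capped with the ambient virtual class, here phrased in the localized (Borel-Moore) setting so as to keep track of the non-proper directions.

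The hard part will be the functoriality of the Borisov-Joyce virtual class under these pull-backs, together with the bookkeeping of orientations. Unlike the Behrend-Fantechi virtual class, where compatibility under virtual pull-back is governed by the formalism of perfect obstruction theories, the $DT_{4}$ virtual class depends on an auxiliary choice of real half-dimensional subbundle (equivalently, a reduction of the structure group of $(\mathcal{L}_{X},Q_{Serre})$ to $SO(1,\mathbb{C})$ and a square root of the determinant line), and one must verify that these choices can be made simultaneously compatible on $\mathfrak{I}^{P}_{\mathfrak{X}/\mathfrak{C}}$, on each $\mathfrak{I}^{\delta}_{\mathfrak{X}_{0}^{\dag}/\mathfrak{C}_{0}^{\dag}}$, and on $\mathfrak{I}^{P}_{X_{t}}$, so that the equalities hold on the nose rather than merely up to sign. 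This is exactly the point at which a relative orientation in the sense of Definition \ref{def on rel ori 1} is required, and it is the obstruction that keeps the statement at the level of a conjecture: a complete proof would need the assertion that Joyce's D-orbifold virtual classes commute with localized Chern-class cap products under the pull-backs of Lemma \ref{lemma on decomposition}, a functoriality result not yet established in the literature.
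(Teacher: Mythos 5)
The statement you are addressing is Conjecture \ref{conj2}: the paper gives no proof of it, only the heuristic discussion that precedes it (the two ``comparison'' paragraphs introducing (\ref{rel obs theory compatible}) and (\ref{generic fiber equal})), and your proposal essentially reproduces that discussion. Your identification of the two main inputs --- a relative $(-2)$-shifted symplectic structure on $\mathfrak{I}^{P}_{\mathfrak{X}/\mathfrak{C}}$ feeding into the Brav--Bussi--Joyce Darboux charts and the Borisov--Joyce gluing, and a compatibility of the resulting virtual class with localized first Chern class cap products along the zero loci $(\pi_{P}^{*}s_{\delta}=0)$ and $(\pi_{P}^{*}\pi^{*}t=0)$ of Lemma \ref{lemma on decomposition} --- matches the intended route, and you are right that the second input is the $DT_{4}$ analogue of what Li--Wu establish for Behrend--Fantechi classes and is not available in the literature.

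Be aware, though, that what you have written is a strategy rather than a proof, and the gaps you flag at the end are genuine mathematical obstacles, not bookkeeping. Concretely: (i) closed points of the central fibre of $\mathfrak{I}^{P}_{\mathfrak{X}/\mathfrak{C}}$ are ideal sheaves on the singular expanded degenerations $X[n]_{0}$, so the ``relative analogue of Theorem 2.13 of \cite{calaque}'' you invoke does not exist off the shelf; the paper only proves the pointwise self-duality of trace-free Ext groups (Proposition \ref{self dual obs}) under the extra hypothesis that $Ext^{1}(I_{C},I_{C})=0$ on $Hilb^{\delta_{0}}(Y)$, and even then the resulting quadratic forms on the obstruction spaces of the relative moduli stacks need not glue without a square root of the canonical bundle (compare Example \ref{ideal sheaves of one pt} and Lemma \ref{new obs space}); (ii) the functoriality of the D-orbifold virtual class under localized Chern class cap products, and the simultaneous compatible choice of orientations on $\mathfrak{I}^{P}_{\mathfrak{X}/\mathfrak{C}}$, $\mathfrak{I}^{\delta}_{\mathfrak{X}_{0}^{\dag}/\mathfrak{C}_{0}^{\dag}}$ and $\mathfrak{I}^{P}_{X_{t}}$, are precisely the content of the conjecture rather than tools one may invoke to establish it. Your write-up is a faithful account of why the statement is conjectural; it does not, and in the current state of the theory cannot, upgrade it to a theorem.
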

To introduce the gluing formula, we make the following definition.
\begin{definition}\label{def of family DT4}
Let $\mathcal{X}\rightarrow C$ be a simple degeneration of projective $CY_{4}$'s such that $X_{0}=X_{+}\cup_{Y}X_{-}$ with $Y$ as
an anti-canonical divisor of $X_{\pm}$. $P$ is a polynomial and $\mathfrak{I}^{P}_{\mathfrak{X}/\mathfrak{C}}\rightarrow C$ is
Li-Wu's good degeneration of $Hilb^{P}(X_{t})$, $t\neq0$. We assume for any $\delta=(\delta_{\pm},\delta_{0})\in\Lambda_{P}^{spl}$,
any closed point $I_{C}\in Hilb^{\delta_{0}}(Y)$ satisfies $Ext^{1}_{Y}(I_{C},I_{C})=0$. Then the family version $DT_{4}$ invariant
of $Hilb^{P}(X_{t})$, $t\neq0$ is a map
\begin{equation}DT_{4}(\mathfrak{I}^{P}_{X_{t}}): Sym^{*}\big(H_{*}(\mathcal{X},\mathbb{Z}) \otimes \mathbb{Z}[x_{1},x_{2},...]\big)
\rightarrow \mathbb{Z} \nonumber \end{equation}
such that
\begin{equation}DT_{4}(\mathfrak{I}^{P}_{X_{t}})((\gamma_{1},P_{1}),(\gamma_{2},P_{2}),...)=\int_{[\mathfrak{I}^{P}_{X_{t}}]^{vir}}
\mu(\gamma_{1},P_{1})\cup \mu(\gamma_{2},P_{2})\cup...  , \nonumber \end{equation}
where $\gamma_{i}\in H_{*}(X,\mathbb{Z})$, $\mu(,)$ is the $\mu$-map defined in Definition \ref{mu map relative}
with respect to the universal sheaf of $\mathfrak{I}^{P}_{\mathfrak{X}/\mathfrak{C}}$ and we view
$\mathfrak{I}^{P}_{X_{t}}\hookrightarrow \mathfrak{I}^{P}_{\mathfrak{X}/\mathfrak{C}}$ as a closed substack to do integration. \\

We take a K\"{u}nneth type decomposition of the cohomology class
\begin{equation}(\mu(\gamma_{1},P_{1})\cup \mu(\gamma_{2},P_{2})\cup...)|_{\mathfrak{I}^{\delta}_{\mathfrak{X}_{0}^{\dag}/\mathfrak{C}_{0}^{\dag}}}
=\sum_{i}\tau_{+,\delta,i}\boxtimes \tau_{-,\delta,i}\in H^{*}(\mathfrak{I}^{\delta_{-},\delta_{0}}_{\mathfrak{X}_{-}/\mathfrak{A}_{\diamond}}\times
\mathfrak{I}^{\delta_{+},\delta_{0}}_{\mathfrak{X}_{+}/\mathfrak{A}_{\diamond}}). \nonumber \end{equation}
Then the relative $DT_{4}$ invariant of $\mathfrak{I}^{\delta_{\pm},\delta_{0}}_{\mathfrak{X}_{\pm}/\mathfrak{A}_{\diamond}}$ with respect to
$\tau_{\pm,\delta,i}$ is
\begin{equation}DT_{4}(\mathfrak{I}^{\delta_{\pm},\delta_{0}}_{\mathfrak{X}_{\pm}/\mathfrak{A}_{\diamond}})(\tau_{\pm,\delta,i})
=\int_{[\mathfrak{I}^{\delta_{\pm},\delta_{0}}_{\mathfrak{X}_{\pm}/\mathfrak{A}_{\diamond}}]^{vir}}\tau_{\pm,\delta,i}\in \mathbb{Q}.
\nonumber \end{equation}
\end{definition}
We state a gluing formula of $DT_{4}$ invariants on Calabi-Yau 4-folds based on previous conjectures.
\begin{theorem}\label{dege formula}
Let $\mathcal{X}\rightarrow C$ be a simple degeneration of projective $CY_{4}$ such that $\omega_{\mathcal{X}/C}=0$ and $X_{0}=X_{+}\cup_{Y}X_{-}$ with $Y$
as an anti-canonical divisor of $X_{\pm}$. $P$ is a polynomial and $\mathfrak{I}^{P}_{\mathfrak{X}/\mathfrak{C}}\rightarrow C$ is
Li-Wu's good degeneration of $Hilb^{P}(X_{t})$, $t\neq0$. We assume for any $\delta=(\delta_{\pm},\delta_{0})\in\Lambda_{P}^{spl}$,
any closed point $I_{C}\in Hilb^{\delta_{0}}(Y)$ satisfies $Ext^{1}_{Y}(I_{C},I_{C})=0$ (without loss of generality,
we assume $Hilb^{\delta_{0}}(Y)$ consists of one point for simplicity), then for $t\neq0\in C$,
\begin{equation}DT_{4}(\mathfrak{I}^{P}_{X_{t}})((\gamma_{1},P_{1}),(\gamma_{2},P_{2}),...)=\sum_{\delta\in\Lambda_{P}^{spl},i}
DT_{4}(\mathfrak{I}^{\delta_{+},\delta_{0}}_{\mathfrak{X}_{+}/\mathfrak{A}_{\diamond}})(\tau_{+,\delta,i})\cdot
DT_{4}(\mathfrak{I}^{\delta_{-},\delta_{0}}_{\mathfrak{X}_{-}/\mathfrak{A}_{\diamond}})(\tau_{-,\delta,i} ), \nonumber \end{equation}
where $\gamma_{i}\in H_{*}(X,\mathbb{Z})$, $\mu(,)$ is the $\mu$-map defined in Definition \ref{mu map relative}
with respect to the universal sheaf of $\mathfrak{I}^{P}_{\mathfrak{X}/\mathfrak{C}}$, $(\tau_{\pm,\delta,i})$
is the factor in a K\"{u}nneth type decomposition as in Definition \ref{def of family DT4}.
\end{theorem}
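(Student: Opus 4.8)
The plan is to derive the numerical gluing identity purely formally from the two cap-product formulas (\ref{central fiber equal}) and (\ref{generic fiber equal}), both of which express a virtual cycle as a localized first Chern class applied to the \emph{single} family virtual cycle $[\mathfrak{I}^{P}_{\mathfrak{X}/\mathfrak{C}}]^{vir}$ supplied by Conjecture \ref{conj2}. First I would compare the two localized Chern classes. By Lemma \ref{lemma on decomposition}(2) one has $\bigotimes_{\delta}\pi_{P}^{*}L_{\delta}\cong\mathcal{O}_{\mathfrak{I}^{P}_{\mathfrak{X}/\mathfrak{C}}}$ and $\prod_{\delta}\pi_{P}^{*}s_{\delta}=\pi_{P}^{*}\pi^{*}t$, so the pair $\big(\bigotimes_{\delta}\pi_{P}^{*}L_{\delta},\prod_{\delta}\pi_{P}^{*}s_{\delta}\big)$ appearing on the right of (\ref{central fiber equal}) is canonically identified with the pair $(\mathcal{O}_{\mathfrak{I}^{P}_{\mathfrak{X}/\mathfrak{C}}},\pi_{P}^{*}\pi^{*}t)$ appearing on the right of (\ref{generic fiber equal}). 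Since the localized first Chern class (Proposition 19.1.2 \cite{fulton}) depends only on the isomorphism class of the line bundle together with its section, the two cap products agree as classes on the total space $\mathfrak{I}^{P}_{\mathfrak{X}/\mathfrak{C}}$. Granting the orientation choices of Conjecture \ref{conj2}, this yields the deformation-invariance identity
\begin{equation}[\mathfrak{I}^{P}_{X_{t}}]^{vir}=\sum_{\delta\in\Lambda_{P}^{spl}}[\mathfrak{I}^{\delta}_{\mathfrak{X}_{0}^{\dag}/\mathfrak{C}_{0}^{\dag}}]^{vir}\nonumber\end{equation}
as homology classes pushed into $\mathfrak{I}^{P}_{\mathfrak{X}/\mathfrak{C}}$; conceptually, the generic fibre cycle and the summed central-stratum cycle are the two specializations of one conserved family cycle, both cut out inside the family by a section of the trivialized bundle with the same total product $\pi_{P}^{*}\pi^{*}t$.

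Next I would pair both sides against the $\mu$-classes. Because the $\mu$-map of Definition \ref{def of family DT4} is built from the universal sheaf on the ambient family $\mathfrak{I}^{P}_{\mathfrak{X}/\mathfrak{C}}$, the class $\mu(\gamma_{1},P_{1})\cup\mu(\gamma_{2},P_{2})\cup\cdots$ is globally defined and restricts compatibly to the generic fibre $\mathfrak{I}^{P}_{X_{t}}$ and to each central stratum $\mathfrak{I}^{\delta}_{\mathfrak{X}_{0}^{\dag}/\mathfrak{C}_{0}^{\dag}}$. Capping and using the projection formula therefore gives
\begin{equation}DT_{4}(\mathfrak{I}^{P}_{X_{t}})((\gamma_{1},P_{1}),(\gamma_{2},P_{2}),\dots)=\sum_{\delta\in\Lambda_{P}^{spl}}\int_{[\mathfrak{I}^{\delta}_{\mathfrak{X}_{0}^{\dag}/\mathfrak{C}_{0}^{\dag}}]^{vir}}(\mu(\gamma_{1},P_{1})\cup\mu(\gamma_{2},P_{2})\cup\cdots)|_{\mathfrak{I}^{\delta}_{\mathfrak{X}_{0}^{\dag}/\mathfrak{C}_{0}^{\dag}}}.\nonumber\end{equation}

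Then I would invoke Conjecture \ref{conj1}. Since we assume $Hilb^{\delta_{0}}(Y)$ is a single rigid point, the fibre product in Lemma \ref{lemma on decomposition}(1) collapses to the honest product $\mathfrak{I}^{\delta_{-},\delta_{0}}_{\mathfrak{X}_{-}/\mathfrak{A}_{\diamond}}\times\mathfrak{I}^{\delta_{+},\delta_{0}}_{\mathfrak{X}_{+}/\mathfrak{A}_{\diamond}}$, and Conjecture \ref{conj1} identifies $[\mathfrak{I}^{\delta}_{\mathfrak{X}_{0}^{\dag}/\mathfrak{C}_{0}^{\dag}}]^{vir}$ with the external product $[\mathfrak{I}^{\delta_{+},\delta_{0}}_{\mathfrak{X}_{+}/\mathfrak{A}_{\diamond}}]^{vir}\times[\mathfrak{I}^{\delta_{-},\delta_{0}}_{\mathfrak{X}_{-}/\mathfrak{A}_{\diamond}}]^{vir}$ for compatible orientations. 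Substituting the K\"unneth decomposition $(\mu(\gamma_{1},P_{1})\cup\cdots)|_{\mathfrak{I}^{\delta}}=\sum_{i}\tau_{+,\delta,i}\boxtimes\tau_{-,\delta,i}$ of Definition \ref{def of family DT4} and applying Fubini factors each summand, producing
\begin{equation}\sum_{\delta\in\Lambda_{P}^{spl},\,i}\Big(\int_{[\mathfrak{I}^{\delta_{+},\delta_{0}}_{\mathfrak{X}_{+}/\mathfrak{A}_{\diamond}}]^{vir}}\tau_{+,\delta,i}\Big)\cdot\Big(\int_{[\mathfrak{I}^{\delta_{-},\delta_{0}}_{\mathfrak{X}_{-}/\mathfrak{A}_{\diamond}}]^{vir}}\tau_{-,\delta,i}\Big),\nonumber\end{equation}
which is exactly the right-hand side of the asserted gluing formula by the definition of $DT_{4}(\mathfrak{I}^{\delta_{\pm},\delta_{0}}_{\mathfrak{X}_{\pm}/\mathfrak{A}_{\diamond}})(\tau_{\pm,\delta,i})$.

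The main obstacle is twofold. Conceptually, the entire argument is conditional on Conjectures \ref{conj1} and \ref{conj2}, i.e.\ on the existence and orientability of the D-orbifold associated with the family $\mathfrak{I}^{P}_{\mathfrak{X}/\mathfrak{C}}$ and on the compatibility equalities (\ref{rel obs theory compatible}) and (\ref{generic fiber equal}) for the pulled-back obstruction theories; these encode the genuinely delicate geometric input that $\mathfrak{I}^{\delta}_{\mathfrak{X}_{0}^{\dag}/\mathfrak{C}_{0}^{\dag}}$ and $\mathfrak{I}^{P}_{X_{t}}$ both arise as derived zero loci of sections of the trivial line bundle inside the family with the family obstruction theory restricting correctly. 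Within the formal deduction itself, the hard part is the orientation bookkeeping: one must fix orientations on the D-orbifolds of $\mathfrak{I}^{P}_{\mathfrak{X}/\mathfrak{C}}$, of each $\mathfrak{I}^{\delta}_{\mathfrak{X}_{0}^{\dag}/\mathfrak{C}_{0}^{\dag}}$, of $\mathfrak{I}^{P}_{X_{t}}$, and of the two relative factors $\mathfrak{I}^{\delta_{\pm},\delta_{0}}_{\mathfrak{X}_{\pm}/\mathfrak{A}_{\diamond}}$ \emph{simultaneously and coherently across the whole family}, so that no relative sign intrudes either in the deformation-invariance identity of the first paragraph or in the product rule of Conjecture \ref{conj1}. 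Controlling these signs uniformly over all splittings $\delta\in\Lambda_{P}^{spl}$ is where the real difficulty lies, and is the reason the statement is phrased with the proviso of choosing appropriate orientations.
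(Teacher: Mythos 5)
Your argument is exactly the paper's intended proof: the paper's own justification is the one-line citation of Lemma \ref{lemma on decomposition}, Conjectures \ref{conj1} and \ref{conj2}, and equation (\ref{central fiber equal}), and you have simply filled in the details of combining (\ref{central fiber equal}) with (\ref{generic fiber equal}) via the trivialization $\bigotimes_{\delta}\pi_{P}^{*}L_{\delta}\cong\mathcal{O}$, the product splitting of the central-fiber cycle from Conjecture \ref{conj1}, and the K\"unneth/Fubini step. Your closing remarks on the conditional nature of the statement and the orientation bookkeeping accurately identify where the genuine (conjectural) content lies.
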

\begin{proof}
By Lemma \ref{lemma on decomposition}, Conjecture \ref{conj1}, \ref{conj2} and (\ref{central fiber equal}).
\end{proof}
\begin{remark}
By Corollary 2.16 \cite{wu}, if $P$ is the Hilbert polynomial associated with structure sheaves of points. The condition which says for any $\delta=(\delta_{\pm},\delta_{0})\in\Lambda_{P}^{spl}$ and any closed point $I_{C}\in Hilb^{\delta_{0}}(Y)$, we have $Ext^{1}_{Y}(I_{C},I_{C})=0$
is satisfied.
\end{remark}

\section{Appendix on the orientability of relative $DT_{4}$ theory}
In this section, we give a coherent description of orientability issues involved in definitions of
relative $DT_{4}$ virtual cycles (in Section 3) and then give some partial verification for the existence of orientations.

We take a smooth (Calabi-Yau) 3-fold $Y$ in a complex projective 4-fold $X$ as its anti-canonical divisor, and denote $\mathfrak{M}_{X}$
to be a moduli space of stable bundles on $X$ with fixed Chern classes.
Assuming conditions in Theorem \ref{restriction thm} are satisfied, we obtain a morphism
\begin{equation}r: \mathfrak{M}_{X}\rightarrow \mathfrak{M}_{Y} \nonumber \end{equation}
to a Gieseker moduli space of stable sheaves on $Y$.
We denote the determinant line bundle of $\mathfrak{M}_{X}$ by $\mathcal{L}_{X}$  with $\mathcal{L}_{X}|_{E}\cong det(H^{odd}(X,EndE))
\otimes det(H^{even}(X,EndE)))^{-1}$ (similarly for $\mathcal{L}_{Y}\rightarrow\mathfrak{M}_{Y}$).
In this set-up, there exists a canonical isomorphism
\begin{equation}\alpha:(\mathcal{L}_{\mathcal{M}_{X}})^{\otimes2}\cong r^{*}\mathcal{L}_{\mathcal{M}_{Y}}\footnote{See for instance Lemma 4.2 of \cite{caoleung3}.}.   \nonumber \end{equation}
\begin{definition}\label{def of rel ori}
A \emph{relative orientation} for morphism $r$ consists of a square root $(\mathcal{L}_{\mathcal{M}_{Y}}|_{\mathcal{M}^{red}_{Y}})^{\frac{1}{2}}$
of the determinant line bundle $\mathcal{L}_{\mathcal{M}_{Y}}|_{\mathcal{M}^{red}_{Y}}$ and an isomorphism
\begin{equation}\theta:\mathcal{L}_{\mathcal{M}_{X}}|_{\mathcal{M}^{red}_{X}} \cong
r^{*}(\mathcal{L}_{\mathcal{M}_{Y}}|_{\mathcal{M}^{red}_{Y}})^{\frac{1}{2}}  \nonumber \end{equation}
such that $\theta\otimes\theta\cong \alpha$ holds over $\mathcal{M}^{red}_{X}$ for the isomorphism $\alpha$.
\end{definition}
\begin{proposition}\label{prop on relative ori}
The restriction morphism has a relative orientation in \textbf{Cases I-III} individually is equivalent to the existence of
an orientation in each corresponding case, i.e. \\
(i) the D-manifold associated with $\mathfrak{M}_{X}$ is orientable in \textbf{Case I};  \\
(ii) the self-dual obstruction bundle is orientable in \textbf{Case II};  \\
(iii) the self-dual reduced obstruction bundle is orientable in \textbf{Case III}.
\end{proposition}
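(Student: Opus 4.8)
The plan is to treat all three cases by a single mechanism: in each case the orientation used to build the virtual cycle is a trivialization of a flat $\mathbb{Z}_{2}$-line bundle (the first Stiefel--Whitney class of a real determinant line carrying a Serre quadratic form), and I will match this obstruction with the one governing a relative orientation. The common input is the canonical isomorphism $\alpha:(\mathcal{L}_{\mathcal{M}_X})^{\otimes2}\cong r^{*}\mathcal{L}_{\mathcal{M}_Y}$, which is itself the determinant of the self-dual long exact sequence of Lemma \ref{def-obs LES} together with Serre duality on $X$ and $Y$. Given $\alpha$, a relative orientation in the sense of Definition \ref{def of rel ori} is exactly a square root $(\mathcal{L}_{\mathcal{M}_Y}|_{\mathcal{M}^{red}_Y})^{1/2}$ together with an isomorphism $\theta$ squaring to $\alpha$; setting $\eta:=\mathcal{L}_{\mathcal{M}_X}|_{\mathcal{M}^{red}_X}\otimes r^{*}(\mathcal{L}_{\mathcal{M}_Y}|_{\mathcal{M}^{red}_Y})^{-1/2}$, the isomorphism $\alpha$ equips $\eta^{\otimes2}$ with a canonical trivialization, and $\theta$ is precisely a compatible square root of that trivialization, i.e. a trivialization of the flat real line bundle underlying $(\eta,\alpha)$. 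Hence in every case the existence of a relative orientation is the vanishing of a single class $w_{1}(\eta)\in H^{1}(\mathcal{M}^{red}_X,\mathbb{Z}_{2})$, and the whole proposition reduces to identifying $w_{1}(\eta)$ with the orientation obstruction of the appropriate self-dual bundle.

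First I would dispose of Case I, which is the model. Here $H^{1}(Y,End_{0}E|_{Y})=0$ forces $\mathfrak{M}_{Y}$ to be a finite set of reduced points, so $\mathcal{L}_{\mathcal{M}_Y}$ is canonically trivial and carries a canonical square root; then $\alpha$ becomes the Serre quadratic form $Q_{Serre}$ on $\mathcal{L}_{\mathcal{M}_X}$ coming from the $(-2)$-shifted symplectic structure, and a relative orientation is nothing but a reduction of the structure group of $(\mathcal{L}_{\mathcal{M}_X},Q_{Serre})$ to $SO(1,\mathbb{C})$. By the description of orientations recalled before Theorem \ref{general existence of ori}, this is exactly an orientation of the D-manifold associated with $\mathfrak{M}_{X}$, which settles (i).

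For Cases II and III I would first extract the canonical square root of $\mathcal{L}_{\mathcal{M}_Y}$: since $\mathfrak{M}_{Y}$ is smooth with symmetric obstruction theory, Serre duality on the Calabi--Yau $3$-fold $Y$ gives $Ext^{2}(E|_{Y},E|_{Y})\cong Ext^{1}(E|_{Y},E|_{Y})^{*}$, whence $\mathcal{L}_{\mathcal{M}_Y}\cong(\det T\mathfrak{M}_{Y})^{\otimes2}$ and $(\mathcal{L}_{\mathcal{M}_Y}|_{\mathcal{M}^{red}_Y})^{1/2}$ exists canonically, so that the only remaining datum is $\theta$. I would then compute $\eta$ by taking determinants of the exact sequences in Cases II and III (Lemma \ref{def-obs LES}, with $\mathfrak{M}_{X},\mathfrak{M}_{Y}$ smooth). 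The tangential contribution, and in Case III the normal/conormal pair $\mathcal{N}_{\mathfrak{M}_X/\mathfrak{M}_Y},\mathcal{N}^{*}_{\mathfrak{M}_X/\mathfrak{M}_Y}$, enter in Serre-dual pairs whose determinant factors are canonically trivialized; after removing them the quadratic-form-carrying factor surviving in $\eta$ is the determinant of the self-dual obstruction bundle of $Ob_{\mathfrak{M}_X}$ in Case II, respectively of the self-dual reduced obstruction bundle $Ob^{red}_{\mathfrak{M}_X}=Ker(s)$ in Case III. Because the Serre form identifies the flat real line bundle underlying such a complex determinant with $\det_{\mathbb{R}}$ of the corresponding self-dual real subbundle, vanishing of $w_{1}(\eta)$ — equivalently existence of $\theta$ — is exactly orientability of that subbundle, proving (ii) and (iii).

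The main obstacle is this last identification, i.e. keeping track of the real structures rather than merely the underlying complex line bundles. As complex line bundles many of the determinant identities are automatic, so all the content of the proposition sits in the compatibility $\theta\otimes\theta\cong\alpha$, which is a square-root-of-quadratic-form condition; the real work is to prove that the flat real line bundle canonically underlying $(\eta,\alpha)$ is exactly $\det_{\mathbb{R}}$ of the self-dual (reduced) obstruction bundle, so that the two $\mathbb{Z}_{2}$-obstructions coincide on the nose and not merely after complexification (which can be trivial while the real bundles are not). A secondary bookkeeping point, essential in Case III, is to justify that passing to the reduced structures $\mathcal{M}^{red}_X,\mathcal{M}^{red}_Y$ strips off precisely $\mathcal{N}^{*}_{\mathfrak{M}_X/\mathfrak{M}_Y}$ together with its Serre-dual partner, contributing a canonically trivialized, orientation-neutral factor to $\eta$; this is what makes $Ob^{red}_{\mathfrak{M}_X}$, rather than $Ob_{\mathfrak{M}_X}$, the correct object there.
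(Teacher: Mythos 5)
Your argument is correct and follows essentially the same route as the paper: exploit the smoothness of $\mathfrak{M}_{Y}$ to take $\det T\mathfrak{M}_{Y}$ as the canonical square root, then cancel the Serre-dual pairs in the determinant of the long exact sequence of Lemma \ref{def-obs LES} so that the only surviving obstruction to $\theta$ is the reduction of the structure group of the (reduced) obstruction bundle with its Serre form to $SO(\bullet,\mathbb{C})$, i.e.\ orientability of the self-dual subbundle. Your $w_{1}(\eta)$ reformulation and the explicit attention to real structures are a more careful packaging of the same computation the paper carries out via the short exact sequence $0\rightarrow H^{3}(X,EndE)^{*}\rightarrow H^{1}(X,EndE)\rightarrow r^{*}H^{1}(Y,EndE|_{Y})\rightarrow 0$.
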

\begin{proof}
As $\mathfrak{M}_{Y}$ is smooth in all cases, $\mathcal{L}_{Y}$ has a canonical square root given by $det(T\mathfrak{M}_{Y})$.

In \textbf{Case I}, $\mathfrak{M}_{Y}$ consists of finite number of points. The existence of relative orientations is obviously equivalent
to the existence of orientations for the D-manifold associated with $\mathfrak{M}_{X}$ (see also Theorem \ref{general existence of ori}).

In \textbf{Case II}, $H^{*}(X,EndE)$'s and $H^{1}(Y,EndE|_{Y})$'s are locally constant. We abuse notations and use them also to denote the corresponding bundles.
By the short exact sequence
\begin{equation}0\rightarrow H^{3}(X,EndE)^{*}\rightarrow H^{1}(X,EndE)\rightarrow r^{*}H^{1}(Y,EndE|_{Y})\rightarrow 0 \nonumber \end{equation}
in \textbf{Case II}, the relative orientability is equivalent to the structure group of the obstruction bundle $H^{2}(X,EndE)$ can be reduced to
$SO(\bullet,\mathbb{C})$, i.e. the self-dual obstruction bundle is orientable.

In \textbf{Case III}, the argument is similar as in \textbf{Case II}.
\end{proof}
We have the following partial verification of the existence of relative orientations.
\begin{theorem}(Weak relative orientability) \label{thm on ori} ${}$ \\
Let $Y$ be a smooth anti-canonical divisor in a projective $4$-fold $X$ with $Tor(H_{*}(X,\mathbb{Z}))=0$, $E\rightarrow X$ be a
complex vector bundle with structure group $SU(N)$, where $N\gg0$.
Let $\mathcal{M}_{X}$ be a coarse moduli scheme of simple holomorphic structures on $E$, which has a well-defined restriction morphism
\begin{equation}r: \mathcal{M}_{X}\rightarrow \mathcal{M}_{Y}, \nonumber \end{equation}
to a proper coarse moduli scheme of simple bundles on $Y$ with fixed Chern classes.

Then there exists a square root $(\mathcal{L}_{\mathcal{M}_{Y}}|_{\mathcal{M}^{red}_{Y}})^{\frac{1}{2}}$ of
$\mathcal{L}_{\mathcal{M}_{Y}}|_{\mathcal{M}^{red}_{Y}}$ such that
\begin{equation}c_{1}(\mathcal{L}_{\mathcal{M}_{X}}|_{\mathcal{M}^{red}_{X}})
=r^{*}c_{1}((\mathcal{L}_{\mathcal{M}_{Y}}|_{\mathcal{M}^{red}_{Y}})^{\frac{1}{2}}),
\nonumber \end{equation}
where $\mathcal{L}_{\mathcal{M}_{X}}$ (resp. $\mathcal{L}_{\mathcal{M}_{Y}}$) is the determinant line bundle of $\mathcal{M}_{X}$
(resp. $\mathcal{M}_{Y}$).
\end{theorem}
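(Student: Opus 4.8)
The plan is to reduce the statement to a family index computation and to isolate, as the genuine content, the evenness of $c_1(\mathcal{L}_{\mathcal{M}_Y})$ on the reduced moduli space. Passing to $\mathcal{M}^{red}_X$ and $\mathcal{M}^{red}_Y$ is what makes first Chern classes and topological square roots well defined, and since $N\gg0$ we may assume a universal sheaf $\mathbb{E}\to \mathcal{M}_X\times X$ exists (its trace-free endomorphism sheaf $\mathcal{E}nd_0\mathbb{E}$ is in any case canonical); similarly $\mathbb{F}\to\mathcal{M}_Y\times Y$. From $\mathcal{L}_{\mathcal{M}_X}=\det(R\pi_*\mathcal{E}nd\mathbb{E})^{-1}$ and Grothendieck--Riemann--Roch,
$$c_1(\mathcal{L}_{\mathcal{M}_X})=-\pi_*\big(\mathrm{ch}(\mathcal{E}nd\mathbb{E})\,\mathrm{Td}(T_X)\big),\qquad c_1(\mathcal{L}_{\mathcal{M}_Y})=-p_*\big(\mathrm{ch}(\mathcal{E}nd\mathbb{F})\,\mathrm{Td}(T_Y)\big),$$
where $\pi_*,p_*$ extract the degree-two component. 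First I would record that $\mathcal{E}nd\mathbb{E}\cong(\mathcal{E}nd\mathbb{E})^{*}$ forces $\mathrm{ch}_{\mathrm{odd}}(\mathcal{E}nd\mathbb{E})=0$, so only $\mathrm{ch}_2$ and $\mathrm{ch}_4$ enter, and the trivial summand $\mathcal{O}_X\subseteq\mathcal{E}nd\mathbb{E}$ contributes only the locally constant factor $H^*(X,\mathcal{O}_X)$, so we may work with $\mathcal{E}nd_0$ throughout.

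Next I would exploit the embedding $\iota\colon Y\hookrightarrow X$. Because $Y\in|K_X^{-1}|$ we have $[Y]=c_1(X)$, so the projection formula converts integration over $Y$ into integration over $X$ against $c_1(X)$, and adjunction gives $c_1(T_Y)=0$, $c_2(T_Y)=c_2(T_X)|_Y$. Combined with the restriction identity $\mathcal{E}nd_0\mathbb{E}|_{\mathcal{M}_X\times Y}\cong(r\times\mathrm{id})^*\mathcal{E}nd_0\mathbb{F}$, this rewrites $r^*c_1(\mathcal{L}_{\mathcal{M}_Y})$ as a pushforward over $X$ that equals $2\,c_1(\mathcal{L}_{\mathcal{M}_X})$, recovering the $c_1$-shadow of the canonical isomorphism $\alpha$. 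Crucially, the same computation singles out the formal half-class
$$\beta:=-p_*\Big(\tfrac12\,\mathrm{ch}_4(\mathcal{E}nd_0\mathbb{F})+\tfrac{1}{24}\,\mathrm{ch}_2(\mathcal{E}nd_0\mathbb{F})\,c_2(T_Y)\Big),$$
which satisfies $2\beta=c_1(\mathcal{L}_{\mathcal{M}_Y})$ and, by the identical pushforward calculation, $r^*\beta=c_1(\mathcal{L}_{\mathcal{M}_X})$ on $\mathcal{M}^{red}_X$ exactly.

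The hard part will be to show that $\beta$ is an \emph{integral} class on $\mathcal{M}^{red}_Y$, equivalently that $c_1(\mathcal{L}_{\mathcal{M}_Y}|_{\mathcal{M}^{red}_Y})$ is divisible by two so that a topological line-bundle square root with $c_1=\beta$ exists; this is the orientation problem for the Calabi--Yau $3$-fold $Y$ and is not forced by Serre self-duality alone. Here is where the hypotheses on $X$ should enter. The torsion-freeness $Tor(H_*(X,\mathbb{Z}))=0$ is meant to guarantee that the universal characteristic classes restrict to torsion-free cohomology, so that divisibility by two is detected integrally rather than merely rationally, and $N\gg0$ secures the universal sheaf globally so that $\beta$ is represented by a genuine class. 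I would try to prove evenness by identifying $\beta$ with $-c_1$ of the determinant of the even part of the index complex: for simple bundles on $Y$ one has $\mathrm{Ext}^0_0=\mathrm{Ext}^3_0=0$ and Serre duality pairs $\mathrm{Ext}^1_0$ with $\mathrm{Ext}^2_0$, so over the smooth locus $(\det\mathcal{E}xt^2_0)^{-1}$ is a square root of $\mathcal{L}_{\mathcal{M}_Y}$ with first Chern class $\beta$; extending this determinant across the jumping and non-reduced locus via the perfect complex $R p_*\mathcal{E}nd_0\mathbb{F}$ and checking the two descriptions agree is precisely the point at which a $2$-torsion obstruction could appear, and is what the torsion-free hypothesis is there to kill.

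Finally I would assemble the conclusion: the square root produced on $\mathcal{M}^{red}_Y$ has $c_1=\beta$, and the index computation gives $r^*\beta=c_1(\mathcal{L}_{\mathcal{M}_X}|_{\mathcal{M}^{red}_X})$, which is the asserted identity. The one residual subtlety to monitor is the matching of the two candidate halves after pullback: a priori $r^*\beta$ and $c_1(\mathcal{L}_{\mathcal{M}_X})$ agree only up to $2$-torsion in $H^2(\mathcal{M}^{red}_X,\mathbb{Z})$, so I would use the explicit pushforward formula above---rather than a mere division-by-two argument---to obtain the equality on the nose, again leaning on $Tor(H_*(X,\mathbb{Z}))=0$ to propagate torsion-freeness to the relevant cohomology.
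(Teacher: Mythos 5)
Your rational computation is essentially the one the paper intends: by Grothendieck--Riemann--Roch (equivalently the family index theorem), self-duality of $\mathcal{E}nd\,\mathbb{E}$ kills the odd Chern characters, and using $[Y]=c_{1}(X)$ together with $Td(Y)=Td(X)|_{Y}/Td(K_{X}^{-1})|_{Y}$ one finds
\begin{equation}
r^{*}c_{1}(\mathcal{L}_{\mathcal{M}_{Y}})=\Big[\tfrac{1}{12}\,ch_{2}(\mathcal{E}nd\,\mathbb{E})\,c_{1}(X)c_{2}(X)+ch_{4}(\mathcal{E}nd\,\mathbb{E})\,c_{1}(X)\Big]/[X]=2\,c_{1}(\mathcal{L}_{\mathcal{M}_{X}})
\nonumber
\end{equation}
in rational cohomology. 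That part is fine and agrees with the paper's calculation.

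The genuine gap is in how you pass from this rational identity to the integral statement, and in where the square root of $\mathcal{L}_{\mathcal{M}_{Y}}$ comes from. You carry out the whole argument on $\mathcal{M}^{red}_{X}$ and $\mathcal{M}^{red}_{Y}$, and you repeatedly invoke $Tor(H_{*}(X,\mathbb{Z}))=0$ ``to propagate torsion-freeness to the relevant cohomology''; but there is no mechanism by which torsion-freeness of $H_{*}(X,\mathbb{Z})$ controls the torsion of $H^{2}(\mathcal{M}^{red}_{X},\mathbb{Z})$ or $H^{2}(\mathcal{M}^{red}_{Y},\mathbb{Z})$ --- the moduli spaces are not determined homotopically by $X$. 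The paper avoids this by working on the ambient gauge-theoretic orbit spaces: one proves the identity $c_{1}(Ind(\slashed{\mathbb{D}}_{End\mathcal{E}_{X}}))=r^{*}c_{1}(\det(Ind(\slashed{\mathbb{D}}_{End\mathcal{E}_{Y}}))^{\frac{1}{2}})$ in $H^{2}(\mathcal{B}^{*}_{X},\mathbb{Z})$, where $\mathcal{B}^{*}_{X}\simeq Map_{P}(X,BSU(N))/\ldots$ and the Federer spectral sequence gives $\pi_{1}(Map_{P}(X,BSU(N)))\cong H^{3}(X,\mathbb{Z})\oplus H^{5}(X,\mathbb{Z})\oplus H^{7}(X,\mathbb{Z})$ for $N\gg0$; this is torsion-free precisely because of the hypothesis on $X$, hence $H^{2}(\mathcal{B}^{*}_{X},\mathbb{Z})$ is torsion-free and the rational identity on embedded surfaces suffices. (This is also the real role of $N\gg0$: it puts the mapping space in the stable range, not merely the existence of a universal sheaf.) The identity is then restricted from $\mathcal{B}^{*}_{X}$ to the moduli space. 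Likewise, your candidate square root $(\det\mathcal{E}xt^{2}_{0})^{-1}$ is only defined where the $\mathrm{Ext}$-sheaves have constant rank, and ``extending across the jumping locus'' is exactly the unresolved difficulty; the paper instead pulls back a square root that already exists on all of $\mathcal{B}_{Y}$ by the orientability theorem of the companion paper (using $Tor(H_{*}(Y,\mathbb{Z}))=0$, which follows from the hypothesis on $X$ via Lefschetz, and the fact that $\mathcal{B}_{Y}\backslash\mathcal{B}^{*}_{Y}$ has large codimension). Without moving to the ambient spaces, both the existence of the square root and the on-the-nose integral equality remain unproved in your argument.
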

\begin{proof}
See the proof of Theorem 4.1 \cite{caoleung3}.
\end{proof}
Another partial result is given as follows.
\begin{proposition}\label{prop on rel ori}
We assume $H^{1}(\mathcal{M}_{X},\mathbb{Z}_{2})=0$. Then relative orientations
for restriction morphism $r: \mathcal{M}_{X}\rightarrow\mathcal{M}_{Y}$ exist.
\end{proposition}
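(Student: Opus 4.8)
The plan is to build a relative orientation in the sense of Definition \ref{def of rel ori} in three stages: first produce the downstairs square root $(\mathcal{L}_{\mathcal{M}_Y}|_{\mathcal{M}^{red}_Y})^{\frac12}$, then produce \emph{some} isomorphism $\theta_0\colon \mathcal{L}_{\mathcal{M}_X}|_{\mathcal{M}^{red}_X}\cong r^{*}(\mathcal{L}_{\mathcal{M}_Y}|_{\mathcal{M}^{red}_Y})^{\frac12}$, and finally rescale $\theta_0$ so that its square agrees with the canonical isomorphism $\alpha$ on the nose. The hypothesis $H^{1}(\mathcal{M}_X,\mathbb{Z}_2)=0$ enters only in the last two stages; the first stage is geometric and comes from $Y$ being Calabi-Yau.

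For the first stage I would use that $\mathcal{M}_Y$ is a moduli space on the $3$-fold $Y$, so Serre duality gives $H^{3-i}(Y,End\,E|_Y)\cong H^{i}(Y,End\,E|_Y)^{*}$ and $H^{0}(Y,End\,E|_Y)\cong\mathbb{C}$ for simple $E|_Y$. Expanding $\mathcal{L}_{\mathcal{M}_Y}|_{E}\cong\det H^{\mathrm{odd}}\otimes(\det H^{\mathrm{even}})^{-1}$ with these identifications collapses it to $(\det H^{1}(Y,End\,E|_Y))^{\otimes2}$, which over the reduced locus is exactly the canonical square root $\det(T\mathcal{M}_Y)$ used in the proof of Proposition \ref{prop on relative ori}. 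This yields $S:=(\mathcal{L}_{\mathcal{M}_Y}|_{\mathcal{M}^{red}_Y})^{\frac12}$ together with a canonical $\beta\colon S^{\otimes2}\cong\mathcal{L}_{\mathcal{M}_Y}|_{\mathcal{M}^{red}_Y}$.

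In the second stage I would compare the two square roots of $r^{*}\mathcal{L}_{\mathcal{M}_Y}$ living over $\mathcal{M}^{red}_X$, namely $r^{*}S$ (with $r^{*}\beta$) and $\mathcal{L}_{\mathcal{M}_X}|_{\mathcal{M}^{red}_X}$ (with $\alpha$). Their difference $\mathcal{L}_{\mathcal{M}_X}|_{\mathcal{M}^{red}_X}\otimes(r^{*}S)^{-1}$ squares to $\mathcal{O}$, hence is a $2$-torsion line bundle classified by a class $\eta\in H^{1}(\mathcal{M}^{red}_X,\mathbb{Z}_2)$. Since passing to the reduced scheme does not change the underlying topological space, $H^{1}(\mathcal{M}^{red}_X,\mathbb{Z}_2)\cong H^{1}(\mathcal{M}_X,\mathbb{Z}_2)=0$, so $\eta=0$ and an isomorphism $\theta_0$ exists. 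For the third stage, $\theta_0\otimes\theta_0$ and $\alpha$ are two isomorphisms between the same line bundles, so they differ by a unit $f\in\mathcal{O}^{*}(\mathcal{M}^{red}_X)$; setting $\theta:=g\,\theta_0$ gives $\theta\otimes\theta=g^{2}f\,\alpha$, and it suffices to find $g$ with $g^{2}=f^{-1}$. The obstruction to extracting such a square root is the mod-$2$ reduction of the class of $f$, which again lies in $H^{1}(\mathcal{M}^{red}_X,\mathbb{Z}_2)=0$, so $g$ (at least continuously, which is all an orientation requires) exists and $(S,\beta,\theta)$ is a relative orientation.

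The main obstacle is the coordination of these two independent pieces of data. The square root $S$ on $\mathcal{M}_Y$ is governed by an $H^{2}(\mathcal{M}^{red}_Y,\mathbb{Z}_2)$-type obstruction that the hypothesis on $\mathcal{M}_X$ simply cannot see, so it is essential to produce it separately from the Calabi-Yau self-duality on $Y$; the assumption $H^{1}(\mathcal{M}_X,\mathbb{Z}_2)=0$ must then do double duty, first to make the two square roots of $r^{*}\mathcal{L}_{\mathcal{M}_Y}$ isomorphic and then to upgrade that isomorphism to one squaring exactly to $\alpha$. The step demanding the most care will be checking that both obstructions genuinely factor through $H^{1}(\mathcal{M}_X,\mathbb{Z}_2)$ — in particular that the transition function $f$ can be square-rooted — and that the reduced structures on $\mathcal{M}_X$ and $\mathcal{M}_Y$ are handled consistently so that the identity $\theta\otimes\theta\cong\alpha$ really holds over $\mathcal{M}^{red}_X$.
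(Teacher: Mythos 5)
Your three-stage strategy --- produce a square root downstairs, compare the two square roots of $r^{*}\mathcal{L}_{\mathcal{M}_{Y}}$ over $\mathcal{M}^{red}_{X}$ using $H^{1}(\mathcal{M}_{X},\mathbb{Z}_{2})=0$, then rescale so that $\theta\otimes\theta\cong\alpha$ --- is the natural one; the paper itself gives no inline argument (it defers to Proposition 4.6 of \cite{caoleung3}), so a line-by-line comparison is not possible, but your second and third stages are correct and are surely where the hypothesis does its work. The identification of the discrepancy bundle $\mathcal{L}_{\mathcal{M}_{X}}|_{\mathcal{M}^{red}_{X}}\otimes (r^{*}S)^{-1}$ as a $2$-torsion line bundle killed by $H^{1}(\mathcal{M}_{X},\mathbb{Z}_{2})=0$ (via the Bockstein into $H^{2}(\mathcal{M}_{X},\mathbb{Z})[2]$), the remark that passing to the reduction does not change the underlying topological space, and the final extraction of a square root of the unit $f$ (whose obstruction is again the mod-$2$ winding class in $H^{1}(\mathcal{M}_{X},\mathbb{Z}_{2})$) are all sound, at least at the topological level that an orientation requires.

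The gap is in your first stage. Serre duality on the Calabi--Yau $3$-fold $Y$ gives the isomorphism $\mathcal{L}_{\mathcal{M}_{Y}}|_{E}\cong\bigl(\det Ext^{1}_{Y}(E|_{Y},E|_{Y})\otimes(\det Ext^{0}_{Y}(E|_{Y},E|_{Y}))^{-1}\bigr)^{\otimes 2}$ only \emph{pointwise}. When $\mathcal{M}_{Y}$ is smooth the $Ext^{1}$'s assemble into $T\mathcal{M}_{Y}$ and you recover the canonical square root $\det(T\mathcal{M}_{Y})$ invoked in the proof of Proposition \ref{prop on relative ori}; but on a general (even reduced) Gieseker moduli space the dimensions of $Ext^{0}$ and $Ext^{1}$ jump, $\det Ext^{1}$ is not a line bundle, and gluing these local square roots into a global one is exactly the orientation-data problem of \cite{ks}, \cite{hua1}, \cite{no} that the introduction of this paper flags as nontrivial. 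That obstruction lives on $\mathcal{M}_{Y}$ and cannot be controlled by a hypothesis on $\mathcal{M}_{X}$ alone (consider components of $\mathcal{M}_{Y}$ disjoint from $r(\mathcal{M}_{X})$). So you should either restrict to the situations of \textbf{Cases I--III}, where $\mathcal{M}_{Y}$ is smooth and the square root is canonical, or take the existence of $(\mathcal{L}_{\mathcal{M}_{Y}}|_{\mathcal{M}^{red}_{Y}})^{\frac{1}{2}}$ as an input --- it is what Theorem \ref{thm on ori} labours to produce under the extra hypotheses $Tor(H_{*}(X,\mathbb{Z}))=0$ and structure group $SU(N)$, $N\gg0$. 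With that caveat made explicit, your argument is complete.
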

\begin{proof}
See Proposition 4.6 of \cite{caoleung3}.
\end{proof}


\begin{thebibliography}{99}
\bibitem{atiyah0} M. F. Atiyah, \textit{New invariants of 3- and 4-Dimensional Manifolds}, Proceedings of Symposia in Pure Mathematics. Vol 48 (1988).
\bibitem{atiyah1} M. F. Atiyah, \textit{Topological quantum field theory}, Publ. Math. Inst. Hautes Etudes Sci. 68 (1989), 175-186.
\bibitem{as4} M. F. Atiyah and I. M. Singer, \textit{The index of elliptic operators: IV}, Ann. of Math. 92 (1970), 119-138.
\bibitem{behrend} K. Behrend, \textit{Donaldson-Thomas type invariants via microlocal geometry}, Ann. of Math. 170 (2009), 1307-1338.
\bibitem{bbs} K. Behrend, J. Bryan and B. Szendr\"{o}i, \textit{Motivic degree zero Donaldson-Thomas invariants}, Invent. Math. 192 (2013), 111-160.
\bibitem{bf} K. Behrend and B. Fantechi, \textit{The intrinsic normal cone}, Invent. Math. 128 (1997), 45-88.
\bibitem{bj} D. Borisov and D. Joyce \textit{Virtual fundamental classes for moduli spaces of sheaves on Calabi-Yau four-folds}, arXiv:1504.00690, 2015.
\bibitem{bbdjs} C. Brav, V. Bussi, D. Dupont, D. Joyce and B. Szendroi, \textit{Symmetries and stabilization for sheaves of vanishing cycles},
arXiv:1211.3259v3, 2013.
\bibitem{bbj} C. Brav, V. Bussi and D. Joyce \textit{A 'Darboux theorem' for derived schemes with shifted symplectic structure}, arXiv:1305.6302, 2013.
\bibitem{bridgeland}T. Bridgeland, \textit{Stability conditions on triangulated categories}, Annals of Mathematics, 166 (2007), 317-345.
\bibitem{calaque} D. Calaque, \textit{Lagrangian structures on mapping stacks and semi-classical TFTs}, arXiv:1306.3235, 2013.
\bibitem{cao} Y. Cao, \textit{Donaldson-Thomas theory for Calabi-Yau four-folds}, MPhil thesis, arXiv:1309.4230, 2013.
\bibitem{caoleung} Y. Cao and N. C. Leung, \textit{Donaldson-Thomas theory for Calabi-Yau 4-folds}, arXiv:1407.7659, 2014.
\bibitem{caoleung3} Y. Cao and N. C. Leung, \textit{Orientability for gauge theories on Calabi-Yau manifolds}, arXiv:1502.01141, 2015.
\bibitem{coxkatz}D. A. Cox and S. Katz, \textit{Mirror Symmetry and Algebraic Geometry}, Mathematical Surveys and Monographs 68, AMS, 1999.
\bibitem{d1}S. K. Donaldson, \textit{The orientation of Yang-Mills moduli spaces and 4-manifold topology}, J. Diff. Geom. 26 (1987), no. 3, 397-428.
\bibitem{donaldson} S. K. Donaldson, \textit{Floer homology groups in Yang-Mills theory}, Cambridge Tracts in Mathematics 147 (2002).
\bibitem{dt} S. K. Donaldson and R. P. Thomas, \textit{Gauge theory in higher dimensions}, in The Geometric Universe (Oxford, 1996), Oxford Univ.
    Press,Oxford, 1998, 31-47.
\bibitem{eg} D. Edidin and W. Graham \textit{Charateristic classes and quadric bundles}, Duke Math. J. 78 (1995), no. 2, 277-299.
\bibitem{flenner}H. Flenner, \textit{Restrictions of semistable bundles on projective varieties}, Comment. Math. Helvetici 59 (1984) 635-650.
\bibitem{fooo} K. Fukaya, Y.-G. Oh, H. Ohta, and K. Ono, \textit{Lagrangian Intersection Floer Theory: Anomaly and Obstruction. part I},
AMS/IP Studies in Advanced Mathematics, International Press.
\bibitem{fulton} W. Fulton, \textit{Intersection theory}, Springer-Verlag, Berlin, 1984.
\bibitem{gv} R. Gopakumar and C. Vafa, \textit{M-Theory and Topological Strings--II}, arXiv:hep-th/9812127, 1998.
\bibitem{hofer} H. Hofer, \textit{Polyfolds and Fredholm Theory}, arXiv:1412.4255, 2014.
\bibitem{hosono}S. Hosono, M.-H. Saito, A. Takahashi, \textit{Relative Lefschetz Action and BPS State Counting},
Internat. Math. Res. Notices, (2001), No. 15, 783-816.
\bibitem{hua1} Z. Hua, \textit{Orientation data on moduli space of sheaves on Calabi-Yau threefold}, arXiv:1212.3790v4, 2015.
\bibitem{ht} D. Huybrechts and R. P. Thomas, \textit{Deformation-obstruction theory for complexes via Atiyah and Kodaira Spencer classes},
 Math. Ann. 346 (2010), 545-569.
\bibitem{iverson}B. Iverson, \textit{Cohomology of sheaves}, Springer-Verlag, New York, 1986.
\bibitem{joyce1}D. Joyce, \textit{D-manifolds and d-orbifolds: a theory of derived differential geometry}, book in preparation, 2012.
Preliminary version available at Joyce's homepage.
\bibitem{joyce} D. Joyce, A series of three talks given Miami, January 2014, homepage of D. Joyce.
\bibitem{js} D. Joyce and Y. N. Song, \textit{A theory of generalized Donaldson-Thomas invariants}, Memoirs of the AMS, arXiv:0810.5645, 2010.
\bibitem{kl}Y. H. Kiem and J. Li, \textit{Categorification of Donaldson-Thomas invariants via Perverse Sheaves}, arXiv:1212.6444v4, 2013.
\bibitem{ks} M. Kontsevich and Y. Soibelman, \textit{Stability structures, motivic Donaldson-Thomas invariants and cluster transformations},
arXiv:0811.2435, 2008.
\bibitem{km}P. Kronheimer and T. Mrowka, \textit{Monopoles and three-manifolds}, New Mathematical Monographs, 10, Cambridge University Press,
Cambridge, 2007.
\bibitem{leung2} N. C. Leung, \textit{Topological Quantum Field Theory for Calabi-Yau threefolds and $G_{2}$ manifolds}, Adv. Theor. Math. Phys. 6 (2002) 575-591.
\bibitem{li1} J. Li, \textit{Stable morphisms to singular schemes and relative stable morphisms}, J. Differential Geom., 57(3) (2001), 509-578.
\bibitem{li2} J. Li, \textit{A degeneration formula of GW-invariants}, J. Differential Geom., 60(2) (2002), 199-293.
\bibitem{lt1} J. Li and G. Tian, \textit{Virtual moduli cycles and Gromov-Witten invariants of algebraic varieties},
Jour. Amer. Math. Soc. 11 (1998), 119-174.
\bibitem{liwu} J. Li and B. Wu, \textit{Good degeneration of Quot-schemes and coherent systems}, arXiv:1110.0390, 2011.
\bibitem{lq}  W. P. Li and Z. Qin, \textit{Stable rank-2 bundles on Calabi-Yau manifolds}, Internat. J. Math. 14 (2003), 1097-1120.
\bibitem{lieb} M. Lieblich, \textit{Moduli of complexes on a proper morphism}, J. Algebraic Geom. 15 (2006), 175-206.
\bibitem{mnop} D. Maulik, N. Nekrasov, A. Okounkov, and R. Pandharipande, \textit{Gromov-Witten theory and Donaldson-Thomas theory I},
Compositio Math. 142 (2006) 1263-1285.
\bibitem{mnop2} D. Maulik, N. Nekrasov, A. Okounkov, and R. Pandharipande, \textit{Gromov-Witten theory and Donaldson-Thomas theory II},
Compositio Math. 142 (2006) 1286-1304.
\bibitem{moop}  D. Maulik, A. Oblomkov, A. Okounkov, and R. Pandharipande, \textit{Gromov-Witten/Donaldson-Thomas correspondence for toric 3-folds},
Invent. Math. 186, 435-479, 2011.
\bibitem{mpt} D. Maulik, R. Pandharipande, and R. P. Thomas, \textit{Curves on K3 surfaces and modular forms}, J. Topol., 3(4):937-996, 2010.
With an appendix by A. Pixton.
\bibitem{mccleary}J. McCleary, \textit{A user's guide to spectral sequences}, Mathematics Lecture Series, Vol. 12, Publish or Perish, Wilmington,
DE, 1985.
\bibitem{mukai} S. Mukai, \textit{Symplectic structure of the moduli space of sheaves on an abelian or K3 surface}, Invent. math. 77, 101-116 (1984).
\bibitem{no}  N. Nekrasov and A. Okounkov, \textit{Membranes and Sheaves}, arXiv:1404.2323, 2014.
\bibitem{pandpixton} R. Pandharipande and A. Pixton, \textit{Gromov-Witten/Pairs correspondence for the quintic 3-fold}, arXiv:1206.5490. 2012.
\bibitem{pt} R. Pandharipande and R. P. Thomas, \textit{Curve counting via stable pairs in the derived category}, Invent. Math. 178: 407-447, 2009.
\bibitem{ptvv}T. Pantev, B. T\"{o}en, M. Vaqui\'{e} and G. Vezzosi, \textit{Shifted Symplectic Structures},
Publ. Math. Inst. Hautes Etudes Sci, June 2013, Volume 117, Issue 1, 271-328.
\bibitem{st} P. Seidel and R.P. Thomas, \textit{Braid group actions on derived categories of coherent sheaves}, Duke Math. J. 108 (2001), 37-108.
\bibitem{th} R. P. Thomas, \textit{A holomorphic Casson invariant for Calabi-Yau 3-folds, and bundles on K3 fibrations},
J. Differential Geometry. 54 (2000), 367-438.
\bibitem{toda} Y. Toda, \textit{Curve counting theories via stable objects I: DT/PT correspondence}, J. Amer. Math. Soc. 23, 1119-1157, 2010.
\bibitem{UY} K. Uhlenbeck and S. T. Yau, \textit{On the existence of Hermitian-Yang-Mills connections in stable vector bundles},
Comm. Pure Appl. Math. 39 (1986) 257-293.
\bibitem{wu} B. Wu, \textit{The moduli stack of stable relative ideal sheaves}, arXiv:math/0701074v1, 2007.
\bibitem{yang} D. Yang, \textit{The polyfold--Kuranishi correspondence I: A choice-independent theory of Kuranishi structures}, arXiv: 1402.7008v2, 2014.
\bibitem{yau} S. T. Yau, \textit{On the Ricci curvature of a compact K\"{a}hler manifold and the complex Monge-Amp\`{e}re equation I},
Comm. Pure Appl. Math. 31 (1978) 339-411.
\end{thebibliography}
\end{document}